\theoremstyle{plain}
\newtheorem{thm}{Theorem}[section]
\newtheorem{lem}[thm]{Lemma}
\newtheorem{prop}[thm]{Proposition}
\newtheorem{theo}[thm]{Theorem}
\newtheorem{cor}[thm]{Corollary}
\theoremstyle{definition}
\newtheorem{note}[thm]{Notation}
\newtheorem{ex}[thm]{Example}
\newtheorem{defn}[thm]{Definition}
\newtheorem{rk}[thm]{Remark}
\title{Conjugation in Brauer algebras and applications to character theory}
\author{Armin Shalile}
\address{Mathematical Institute, University of Oxford, 24-29 St Giles',
Oxford OX1 3LB, United Kingdom.}
\date{November 27, 2009}
\newcommand{\C}{\mathbb{C}}
\newcommand{\Z}{\mathbb{Z}}
\newcommand{\N}{\mathbb{N}}
\newcommand{\inv}{^{-1}}
\newcommand{\glnc}{GL_n(\mathbb{C})}
\newcommand{\brd}{B_r (\delta)}
\newcommand{\arc}{\ar@{-}@/_/}  
\newcommand{\tra}{\ar@{-}}      
\newcommand{\dta}{\ar@{.}}      
\newcommand{\ltra}{\ar@{<-}}      
\newcommand{\rtra}{\ar@{->}}      
\newcommand{\larc}{\ar@{<-}@/_/}  
\newcommand{\rarc}{\ar@{<-}@/_/}
\newcommand{\tr}{\text{tr}}
\newcommand{\res}{\downarrow_{\Sigma_r}}
\newcommand{\sta}{^{\ast}}
\begin{document}

\begin{abstract}
The Brauer algebra has a basis of diagrams and these generate a monoid $H$ consisting of scalar multiples of diagrams. Following a recent paper by Kudryavtseva and Mazorchuk, we define and completely determine three types of conjugation in $H$. We are thus able to define modular characters for Brauer algebras which share many of the properties of Brauer characters defined for finite groups over a field of prime characteristic. Furthermore, we reformulate and extend the theory of ordinary characters for Brauer algebras as introduced by Ram to the case when the Brauer algebra is not semisimple.
\end{abstract}

\maketitle

\section*{Introduction}
In \cite{Schur}, Issai Schur proved a fundamental link between representations of symmetric groups and general linear groups known as Schur-Weyl duality.  This duality arises from the following double centralizer property: Let $r$  and $n$ be  natural numbers, denote by $\Sigma_r$ the symmetric group on $r$ letters, by $\glnc$ the general linear group of invertible $n \times n$ matrices over $\C$ and suppose $E$ is an $n$-dimensional complex vector space.  Then $\Sigma_r$ and $\glnc$ act on the $r$-fold tensor space $E^{\otimes r}$ by place permutations and diagonal extension of the natural action on $E$, respectively. The $\C$-endomorphisms induced by the action of $\Sigma_r$ and $\glnc$ on  $E^{\otimes r}$ are precisely those $\C$-endomorphism which commute with $\glnc$ and $\Sigma_r$, respectively. 

In \cite{Brauer}, Richard Brauer defined an algebra which replaces the symmetric group in Schur-Weyl duality if one replaces the general linear group by its orthogonal and symplectic subgroups. This algebra is known today as the Brauer algebra and will be defined in Section~\ref{section defn of Brauer algebras}. 

Let $F$ be a field and suppose $\delta \in F$ is a fixed parameter. The Brauer algebra has a basis of diagrams and the multiplication of two diagrams yields another diagram, potentially scaled by a power of the parameter $\delta$. Since multiplication is associative, this gives rise to a possibly infinite semigroup $H$ generated by diagrams and their products. In \cite{Mazorchuk}, three notions of conjugacy in semigroups are defined and we  completely describe these in the case of $H$. 

The first type is via conjugation by the group of units in $H$. The group of units of $H$ can be identified with permutations and potentially scalar multiples of permutations in case $\delta$ has finite order. This is the strongest of the three notions of conjugacy and will imply the other types of conjugacy. In order to determine the conjugacy classes, we introduce a new invariant, the generalized cycle type and show that two diagrams are conjugate by a permutation if and only if they have equivalent generalized cycle types, see Theorem~\ref{theorem sigma conjugate iff gct the same}.

The second type of conjugation, called $(uv,vu)$-conjugation, arises from taking the transitive closure of the following relation on $H$: Two elements $h_1, h_2 \in H$ are related if there are $u,v \in H$ such that $h_1=uv$ and $h_2=vu$. It turns out that the conjugacy classes in this case have a nice description in terms of the cycle type, an invariant introduced by Arun Ram in \cite{Ram}. However, proving this poses  far more challenges than the first kind of conjugation. We will first show that diagrams with the same cycle type are $(uv,vu)$-conjugate by using the control the generalized cycle type gives us over the combinatorics. 

As a consequence of the study of the combinatorics, we get some information about eigenvalues of matrix representations. This will eventually allow us to define modular characters for Brauer algebras which are a generalization of the Brauer character of a finite group over fields of prime characteristic, see Definition~\ref{defn brauer chars}. Furthermore, we will reformulate and extend the ordinary character theory studied by Ram in the semisimple case to the non-semisimple case. In Theorem~\ref{thm modular char of  quasipartitions}, we will show that the relationship between ordinary characters of Brauer algebras and symmetric groups, observed by Ram in \cite{Ram}, also carries over to the non-semisimple case and even to modular characters. This will imply that in order to compute ordinary and modular characters, and thus decomposition matrices, it is sufficient to understand the restriction of the simple Brauer algebra modules to the symmetric group.

In Theorem~\ref{theo determination of chi conjugacy} we use our knowledge of characters to completely describe the third kind of conjugation: Two elements $h_1$ and $h_2$ are $\chi$-conjugate if and only if for all Brauer algebra characters $\chi$ we have $\chi(h_1)=\chi(h_2)$. Since $\chi$-conjugacy is the weakest of the three types of conjugacy, this will also allow us to complete our study of $(uv,vu)$-conjugation, see Corollary~\ref{cor determination of uvvu conjugacy}.

We remark that, similar to the case of finite groups, the knowledge of conjugacy classes gives a lot of information about the center of the Brauer algebra. In particular, we are able to provide an explicit algorithm for the determination of a basis of the center for all but finitely many values of the parameter, see \cite{Shalile}.

\section{Preliminaries}

\subsection{Definition of Brauer algebras}
\label{section defn of Brauer algebras}
Let $r$ be a natural number. By a diagram on $2r$ dots we mean a $2 \times r$-array of dots, that is two
rows of $r$ dots each, and each dot is connected by an edge to
exactly one other dot distinct from itself. An example of a diagram
on $16$ dots is $$ \begin{array}{c}
 \begin{xy}
\xymatrix@!=0.01pc{ \bullet \ar@{-}[r] & \bullet &\bullet \ar@{-}@/^/[rr] & \bullet \ar@{-}[rd]&\bullet &\bullet \ar@{-}[r] & \bullet & \bullet\ar@{-}[d] \\
\bullet \ar@{-}@/_/[rrr] & \bullet \ar@{-}[r] & \bullet & \bullet &
\bullet& \bullet\ar@{-}[r] & \bullet& \bullet }
\end{xy}\end{array}.$$ A horizontal arc is an edge connecting two points in the same
row, while a through arc is an edge connecting two dots in
different rows. A vertical arc is a special case of a
through arc connecting two dots in the same column. 

Given a field $F$ and a parameter $\delta \in F$, it is also
possible to compose diagrams with the same number of dots called concatenation. To concatenate two diagrams $a$ and $b$, write $a$ on top of $b$ and connect adjacent rows. The concatenation $a \circ b$ is the diagram obtained by deleting the loops of this construction, premultiplied by as many powers of $\delta$ as there were loops removed.

For example 

\begin{center}
 $\begin{array}{c}\begin{xy}
\xymatrix@!=0.01pc{ \bullet\tra[r] & \bullet &\bullet\tra[lld] &  \bullet \tra[d] \\
\bullet \dta[d]  \ar@{}[u]^{\text{ \large $a =$ \  }}  &
\bullet \tra[r] \dta[d] & \bullet \dta[d] & \bullet  \dta[d] \ar@{}[d]^{\text{ \large \ $\Longrightarrow \ a \circ b = $   }}  \\ \bullet \tra[rd] & \bullet \tra[r] &\bullet &  \bullet  \tra[ld] \\
\bullet \ar@{}[u]^{\text{ \large $b =$ \  }}  \arc[rrr] & \bullet &
\bullet & \bullet}
\end{xy}
 \begin{xy}
\xymatrix@!=0.01pc{ \ & \ &\ & \ \\
\bullet \tra[r]  &
\bullet & \bullet \tra[ld] & \bullet   \tra[ld]   \\ \bullet \arc[rrr]  \ar@{}[u]^{\text{ \large $\delta \cdot $ \  }} & \bullet  &\bullet &  \bullet  \\
\ & \ &\ & \ }
\end{xy} \end{array}.  $
\end{center}

\begin{defn}
Let $F$ be a field and fix some $\delta \in F$. For any $r \in \N$ define the Brauer algebra $\brd$ to be the associative $F$-algebra with basis all Brauer diagrams on $2r$ dots and multiplication given by concatenation.
\end{defn}

\begin{rk}
\begin{enumerate}[(i)]
\item If $F$ is a field of rational polynomials in a single indeterminate $x$ and $\delta=x$, then we get the so called generic Brauer algebra.
\item If $\Sigma_r$ denotes the symmetric group of degree $r$, then any diagram on $2r$ dots without horizontal arcs can be identified with an element of $\Sigma_r$. It is easily checked that concatenation of diagrams
without horizontal arcs is the same as composing permutations. Thus the group
algebra $F\Sigma_r$ is a subalgebra of the Brauer
algebra.
\end{enumerate}
\end{rk}

\begin{defn}
\label{defn cycle}
We say that a diagram is a cycle if the top left dot is connected to
some dot in the second column and for all other columns one dot in
each column is connected to a dot in the next column. Here the
$r+1$th column is identified with the first column. A diagram is said
to be in cycle form if it consists of cycles.

The following is a diagram in cycle form: \begin{center}
$\begin{array}{c} \begin{xy} \xymatrix@!=0.01pc{ \bullet \tra[rd] & \bullet \tra[rd] & \bullet & \bullet \tra[r] & \bullet & \bullet \tra[r] & \bullet  & \bullet & \bullet \\
\bullet \tra[rru]& \bullet & \bullet & \bullet\tra[r] & \bullet &
\bullet & \bullet \tra[ru] & \bullet \tra[ru] & \bullet 
\ar@{-}@/^/[lll]}
\end{xy} \end{array}.$
\end{center}

\end{defn}

\subsection{Semisimplicity}
 We will now briefly state when the Brauer algebra is semisimple. Semisimplicity depends on two factors: The characteristic of the field $F$ and the parameter $\delta$.

\begin{theo}[See Theorem 2.3 in \cite{Rui2}]
\label{thm when is br semisimple}
For $\delta \neq 0$ the Brauer algebra $B_r(\delta)$ is semisimple if and only if $$
\delta \notin \{ i \in \Z1 : 4-2r \leq i \leq r-2\} \backslash \{ i
\in \Z1 : 4-2r < i \leq 3-r, 2 \nmid i \} $$ and the characteristic of $F$ does not divide $n!$. Furthermore, $B_r(0)$  is semisimple if and only if $r=1,3,5$ and the characteristic of $F$ does not divide $n!$.
\end{theo}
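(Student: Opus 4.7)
The approach is to exploit the cellular structure of $B_r(\delta)$ due to Graham and Lehrer. Under this structure, the cell modules $\Delta(\lambda)$ are indexed by partitions $\lambda$ of $r-2k$ for $0 \leq k \leq \lfloor r/2 \rfloor$, and each carries a canonical cellular bilinear form. Standard cellular algebra theory then reduces semisimplicity to a determinantal criterion: $B_r(\delta)$ is semisimple if and only if every Gram determinant $\det G_\lambda(\delta)$ is non-zero in $F$, in which case the cell modules exhaust the simple modules. So the question becomes the computation of these determinants.

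The main body of the proof would then establish an explicit factorization of $\det G_\lambda(\delta)$ as a product of linear polynomials in $\delta$, working inductively via the tower $B_{r-2}(\delta) \subset B_{r-1}(\delta) \subset B_r(\delta)$ and the Jones basic construction. The roots of these linear factors are shown to be integers lying in the interval $[4-2r,\,r-2]$, which pinpoints the bad parameter set; the exception carving out the odd integers in $(4-2r,\,3-r]$ reflects a genuine cancellation in the factorization, and matches on the Schur--Weyl side the fact that symplectic parameters $\delta=-2n$ are automatically even, so odd negative $\delta$ in the extreme range do not correspond to classical group fusions. The characteristic condition $\text{char}(F) \nmid r!$ enters from the second part of the remark: $B_r(\delta)$ contains $F\Sigma_r$ as the subalgebra of permutation diagrams, and Maschke's theorem forces $p \nmid r!$ as a necessary condition. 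Conversely, once this holds and $\delta$ avoids the bad set, the Gram determinants remain non-zero after reduction and semisimplicity follows.

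The case $\delta = 0$ is handled separately: for generic $r$, concatenating two suitable diagrams with horizontal arcs closes a loop and produces a factor of $\delta$, which vanishes, yielding non-trivial nilpotent elements and a non-zero Jacobson radical. The exceptional small values $r = 1, 3, 5$ are verified by direct inspection of the algebra, where one can list the cell modules and confirm that the Gram forms remain non-degenerate at $\delta=0$. The main technical obstacle throughout is the explicit linear factorization of $\det G_\lambda(\delta)$, which requires a careful combinatorial analysis of up-down tableaux and branching for the Brauer tower; isolating and justifying the odd-integer sub-range that must be excised from the bad set is the most delicate combinatorial point and is where the statement of the theorem acquires its asymmetric form.
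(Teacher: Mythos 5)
The paper offers no proof of this statement: it is quoted from Rui and Si (Theorem 2.3 of \cite{Rui2}), which builds on earlier work of Wenzl and Rui, so there is no internal argument to compare yours against. Your outline does follow the same broad route as that literature --- Graham--Lehrer cellularity, the criterion that semisimplicity is equivalent to non-degeneracy of all cell-module Gram forms, and a recursive analysis along the tower $B_{r-2}(\delta) \subset B_{r-1}(\delta) \subset B_r(\delta)$. But as a proof it has genuine gaps: the entire content of the theorem is the explicit determination of the zeros of the Gram determinants, and this is precisely the step you defer (``the main body of the proof would then establish an explicit factorization\dots''). The claim that the determinants factor into linear polynomials with integer roots in $[4-2r,\,r-2]$, and that exactly the odd integers in $(4-2r,\,3-r]$ drop out, is asserted rather than derived, and the Schur--Weyl remark that symplectic parameters are even is a heuristic, not an argument. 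In Rui's and Rui--Si's papers this occupies the bulk of the work (via Jucys--Murphy elements and recursions for Gram determinants), so the proposal cannot be regarded as a proof of the hard direction.

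Two concrete steps are also wrong as stated. First, semisimplicity does not pass to subalgebras, so ``$B_r(\delta)$ contains $F\Sigma_r$, hence Maschke forces $p \nmid r!$'' is not a valid necessity argument; the correct route is that $F\Sigma_r$ is a \emph{quotient} of $B_r(\delta)$ by the two-sided ideal spanned by all diagrams with at least one horizontal arc, and quotients of semisimple algebras are semisimple. Second, for $\delta = 0$ the existence of nilpotent \emph{elements} proves nothing (matrix algebras are full of nilpotents); you need a nonzero nilpotent \emph{ideal} or a degenerate Gram form. This is easy for even $r$ (already for $r=2$ the span of the unique diagram with horizontal arcs is an ideal of square zero), but distinguishing the odd cases $r \geq 7$ from $r = 1,3,5$ is exactly the delicate point of Rui--Si's paper and is not settled by ``direct inspection''; note that $B_5(0)$ already has dimension $945$. (A minor point: the ``$n!$'' in the statement should read ``$r!$'', as you implicitly corrected.)
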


\subsection{Cell modules}
\label{section defn of cell modules}
For any $n \in \N$, we call $\lambda=(\lambda_1,\lambda_2,\ldots,\lambda_k)$, with $k \in \N$ and $\lambda_i \in \N$ for $i=1,\ldots,k$,  a partition of $n$, if $\lambda_i > \lambda_{i+1}$ for each $i=1,\ldots, k-1$ and $|\lambda|:=\sum_{i=1}^k \lambda_i = n$. The natural numbers $\lambda_i$ are called the parts of $\lambda$. If we allow $\lambda_i \in \N_0$, then $\lambda$ is called a partition including $0$ parts. We consider $\lambda=()$ to be the unique partition of $0$. If $p \in \N$ is a prime then the partition $\lambda$ is called $p$-singular if there is $i \leq k-p+1$ such that $\lambda_i=\lambda_{i+1}=\ldots=\lambda_{i+p-1}$. Otherwise, it is called $p$-regular. If $\lambda_i$ is not divisible by $p$ for all $i=1,\ldots,k$ then $\lambda$ is called $p$-special. For convenience, we will adopt the convention that all partitions are $0$-regular and $0$-special. 

For every partition $\lambda$ of some natural number $n$, there is a distinguished $F\Sigma_n$-module $S^{\lambda}$, called the Specht module corresponding to $\lambda$. The set   $\{ S^{\lambda} \ | \ \lambda \text{ a  partition of $n$} \}$ is a complete set of representatives of simple $F\Sigma_n$-modules in characteristic $0$. If $F$ is a field of characteristic $p$ and $\lambda$ is $p$-regular then the module $S^{\lambda}$ has a unique simple top $D^{\lambda}$. Furthermore, the set $\{ D^{\lambda} \ | \ \lambda \text{ a $p$-regular partition of $n$} \}$ is a complete set of representatives of simple $F\Sigma_n$-modules in characteristic $p$. Similar modules exist for the Brauer algebra and we will briefly define
these modules. The following construction already implicitly appeared   in the work of Brown, see \cite{Brown}. However, we will follow the exposition of Section 2.2 in \cite{HenkePaget}. As shown in \cite{GrahamLehrer} the Brauer algebra is a cellular algebra and the $\brd$-modules which we will construct coincide with the cell modules. We need the following definition:
\begin{defn}
Let $t \in \N_0$ with $r-2t \geq 0$. A partial diagram on $r$ dots with $t$ arcs is a $1 \times r$ array
with $t$ horizontal arcs such that each dot is connected
to at most one other dot. The $r-2t$ dots which are not connected by an
edge are called free dots. The vector space with basis all partial
diagrams on $r$ dots with $t$ arcs is denoted by $V_{r,t}$, or $V_t$
if $r$ is clear from the context. An example of a partial diagram on $8$ dots with 2 arcs is:
$$\begin{array}{c}
 \begin{xy}
\xymatrix@!=0.01pc{ \bullet \ar@{-}@/^/[rrr] & \bullet  & \bullet & \bullet &
\bullet& \bullet\ar@{-}[r] & \bullet& \bullet }
\end{xy}\end{array}.$$
\end{defn}

The Brauer algebra acts on partial diagrams. If $a$ is a Brauer diagram on $2r$ dots and $v$ is a
partial diagram on $r$ dots then $v \circ a$ is determined as follows:
Write $v$ on top of $a$ and identify adjacent rows. If $n$ loops are created in this construction then the composition $v \circ a$ is equal to $\delta^n$ times the partial diagram obtained by
taking the bottom row of $a$ and following the resulting paths. If, however,
the number of arcs of the composition is increased then the product is $0$. For example, suppose $v$ and $a$ are given by
$$\begin{array}{c}
v =  \\
\ \\
\ \\
a =  \\
\
\end{array}
\begin{array}{c} \begin{xy}
\xymatrix@!=0.01pc{  \bullet \ar@{.}[d] \ar@{-}@/^/[rrr] & \bullet \ar@{-}[r]
\ar@{.}[d]
 & \bullet \ar@{.}[d] & \bullet \ar@{.}[d] &
\bullet^1 \ar@{.}[d] & \bullet\ar@{-}[r]  \ar@{.}[d] & \bullet \ar@{.}[d]  & \bullet^2 \ar@{.}[d] \\ \bullet \ar@{-}[r] & \bullet &\bullet \ar@{-}[r] & \bullet &\bullet \ar@{-}[lld] &\bullet \ar@{-}[r] & \bullet & \bullet\ar@{-}[lllllld] \\
\bullet  \ar@{-}@/_/[rrr] & \bullet^1  & \bullet^2 & \bullet  &
\bullet \ar@{-}[r] & \bullet & \bullet \ar@{-}[r] & \ \bullet  }
\end{xy} \end{array}.$$
Here we have already indicated the identification of the adjacent rows and also numbered certain dots which will be needed in a moment. Thus, 
 $$ \begin{array}{c}
   v \circ a = \delta^2
  \begin{xy}
\xymatrix@!=0.01pc{ \bullet \ar@{-}@/^/[rrr] & \bullet  & \bullet & \bullet &
\bullet \ar@{-}[r] & \bullet & \bullet \ar@{-}[r] & \bullet }
\end{xy} \end{array}.$$

Notice that the multiplication $ v \circ a$ induces a permutation $\pi(a,v)$ of the free dots of $v$ by joining them to free dots of the product $v \circ a$. In the example above
the first free dot of $v$ is joined to the second free dot of $v \circ a$
and the second free dot of $v$ is joined to the first free dot of
$v \circ a$ so we get the permutation $(1,2).$

We can thus define the cell modules $S^{(t,\lambda)}$:

\begin{defn}
\label{defnofaction}
For each $t \in \N_0$ such that $r-2t \geq 0$ and each partition $\lambda$ of $r-2t$, define the
$B_r(\delta)$-module $S^{(t,\lambda)}$ to be the vector space $ S^{(t,\lambda)} = S^{\lambda} \otimes_F V_{r,t}.$ The Brauer algebra acts on $S^{(t,\lambda)}$
by linear extension of the following action: For any $m \in S^{\lambda}$, any partial diagram $v \in V_t$ and any Brauer diagram $a \in B_r(\delta)$ set $$ (m \otimes
v) \cdot a = (\pi(a,v) m )\otimes (v \circ a).$$
\end{defn}

\begin{theo}
If $\brd$ is semisimple then the set  $$\{ S^{(t,\lambda)} \ | \  \text{$t \in \N_0, t \leq [r/2], \lambda$ a partition of $r-2t$} \}$$ is a complete set of representatives of simple $\brd$-modules. If $\brd$ is not semisimple and $\lambda$ is $p$-regular then $S^{(t,\lambda)}$ has a unique simple top $D^{(t,\lambda)}$ and the set  $$\{ D^{(t,\lambda)} \ | \  \text{$t \in \N_0, t \leq [r/2], \lambda$ a $p$-regular partition of $r-2t$} \}$$ is a complete set of representatives of simple $\brd$-modules.
\end{theo}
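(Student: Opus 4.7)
The plan is to deduce the theorem from the general theory of cellular algebras developed by Graham and Lehrer, which (as already noted in the excerpt) applies to $B_r(\delta)$ with cell data indexed by pairs $(t,\lambda)$ and cell modules precisely the $S^{(t,\lambda)}$ just constructed. The Graham--Lehrer machinery equips each cell module $S^{(t,\lambda)}$ with a canonical invariant symmetric bilinear form $\phi_{(t,\lambda)}$, whose radical is a $B_r(\delta)$-submodule, and guarantees the general dichotomy: either $L^{(t,\lambda)}:=S^{(t,\lambda)}/\mathrm{rad}\,\phi_{(t,\lambda)}$ is zero, or it is absolutely simple and is the unique simple quotient of $S^{(t,\lambda)}$; the nonzero $L^{(t,\lambda)}$ then form a complete set of representatives of the simple $B_r(\delta)$-modules, and $B_r(\delta)$ is semisimple iff every $\phi_{(t,\lambda)}$ is non-degenerate.

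For the semisimple case, if $B_r(\delta)$ is semisimple then every $\phi_{(t,\lambda)}$ is non-degenerate, so $L^{(t,\lambda)}=S^{(t,\lambda)}$ is simple for every admissible $(t,\lambda)$. Distinct pairs yield non-isomorphic simple modules: one separates the parameter $t$ by noting that in $S^{(t,\lambda)}$ any diagram with more than $t$ horizontal arcs acts as zero while on $S^{(t',\lambda')}$ with $t'<t$ such a diagram can still act nontrivially; once $t$ is fixed, different $\lambda$'s are distinguished by restricting to the subalgebra $F\Sigma_{r-2t}\subseteq B_r(\delta)$, on which $S^{(t,\lambda)}$ contains the Specht module $S^{\lambda}$ as a direct summand of a natural quotient (cf.\ Definition~\ref{defnofaction}). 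A dimension count then confirms exhaustion.

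For the non-semisimple case the main task is to prove that $L^{(t,\lambda)}\neq 0$ iff $\lambda$ is $p$-regular, where $p=\mathrm{char}(F)$. The tensor decomposition $S^{(t,\lambda)}=S^{\lambda}\otimes_F V_{r,t}$ and the action in Definition~\ref{defnofaction} show that, after quotienting out the submodule spanned by $m\otimes v$ with $v$ having fewer than $t$ free ``pairable'' configurations, the form $\phi_{(t,\lambda)}$ becomes, up to a nonzero scalar on each tensor factor, the product of a non-degenerate form on the $V_{r,t}$-side with the standard Specht-module cellular form on $S^{\lambda}$. Consequently the radical of $\phi_{(t,\lambda)}$ traces out exactly the radical of the Specht form on $S^{\lambda}$, so $L^{(t,\lambda)}\neq 0$ iff $D^{\lambda}\neq 0$, i.e.\ iff $\lambda$ is $p$-regular (with the convention in the excerpt that every partition is $0$-regular). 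Combined with the general theorem, this gives the stated parametrization, with uniqueness of the simple top $D^{(t,\lambda)}$ inherited from the cellular structure.

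The main obstacle is the precise identification of $\phi_{(t,\lambda)}$ with the Specht form on the ``top layer'' $S^{\lambda}$: one must make the reduction rigorous by pinning down a preferred basis of $V_{r,t}$ and a filtration of $S^{(t,\lambda)}$ compatible with the cellular partial order. This is essentially the inflation/localisation picture of K\"onig--Xi for iterated inflations of cellular algebras, and once it is set up correctly the computation of $\mathrm{rad}\,\phi_{(t,\lambda)}$ reduces to the symmetric-group case, which is classical.
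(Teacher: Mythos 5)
The paper itself does not prove this theorem: it is quoted as background, with the construction attributed to Brown and Henke--Paget and the cellular structure to Graham--Lehrer, so your choice of framework (Graham--Lehrer cell forms, K\"onig--Xi iterated inflation) is indeed the standard route. The problem is the central step of your non-semisimple case. The claim that, after a suitable reduction, $\phi_{(t,\lambda)}$ is ``up to a nonzero scalar on each tensor factor, the product of a non-degenerate form on the $V_{r,t}$-side with the Specht form'', so that $\mathrm{rad}\,\phi_{(t,\lambda)}$ is exactly the Specht radical, is false. The cell form does not factor: its values are $\delta^{c(v,w)}\langle x\,\pi(v,w),y\rangle_{S^{\lambda}}$ (or $0$ when extra arcs are created), i.e.\ it is twisted by the induced permutations and weighted by powers of $\delta$, and the ``$V_{r,t}$-side'' Gram data is typically degenerate. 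A concrete contradiction: take characteristic $0$ and $\delta$ in the critical set of Theorem~\ref{thm when is br semisimple} (e.g.\ $B_3(1)$ or $B_2(0)$). Every partition is $0$-regular, every Specht form is non-degenerate, so your identification would force every $\phi_{(t,\lambda)}$ to be non-degenerate and hence $\brd$ to be semisimple --- which it is not. (Also, the ``submodule spanned by $m\otimes v$ with $v$ having fewer than $t$ free pairable configurations'' does not exist: every $v\in V_{r,t}$ has exactly $t$ arcs.)

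What the theorem actually requires is only the much weaker statement that $\phi_{(t,\lambda)}\not\equiv 0$ if and only if the Specht form on $S^{\lambda}$ is not identically zero, i.e.\ iff $\lambda$ is $p$-regular: for $p$-singular $\lambda$ the Specht form vanishes identically, hence so does $\phi_{(t,\lambda)}$; for $p$-regular $\lambda$ and $\delta\neq 0$ take $w=v$, which gives the value $\delta^{t}\langle x,y\rangle_{S^{\lambda}}$; for $\delta=0$ and $r-2t\geq 1$ take two interlocking partial diagrams (arcs $\{1,2\},\dots,\{2t-1,2t\}$ against $\{2,3\},\dots,\{2t,2t+1\}$) whose union is a loop-free path. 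Doing this honestly also exposes the genuine exception your argument silently misses: for $\delta=0$, $r$ even and $(t,\lambda)=(r/2,\emptyset)$ the form is identically zero although $\emptyset$ is $p$-regular, so this label must be excluded (a caveat the quoted statement glosses over as well). Finally, a small slip in your semisimple case: a diagram with exactly $t$ horizontal arcs per row annihilates $S^{(t',\lambda')}$ for $t'<t$ and can act nontrivially on $S^{(t,\lambda)}$, not the other way round as you wrote; in any case pairwise non-isomorphism of the cell modules with nonzero form is already part of the Graham--Lehrer machinery and needs no separate argument.
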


\begin{rk}
\label{rk indexing sets of simple modules}
Let $C$ denote the set of all partitions of $r-2k$ for $0 \leq k \leq [r/2]$. Furthermore, let $C_{p}$ denote the subset of $C$ consisting of all $p$-regular partitions where $p$ is the characteristic of the field $F$. The previous proposition implies that $C_{p}$ is the indexing set for simple $\brd$-modules and $C$ is the indexing set for the $\brd$-cell modules. 

If $C_{p'}$ is the set of all $p$-special partitions then there is a $1$-$1$-correspondence between $C_p$ and $C_{p'}$, and thus also between the simple $\brd$-modules and $p$-special partitions, see Lemma 10.2 in \cite{James}.
\end{rk}

\subsection{Decomposition matrices} 
\label{section defining decomposition matrices}
The $\brd$-cell modules  $S^{(t,\lambda)}$ can be defined over any field but  they will not necessarily be simple. The decomposition matrix of $\brd$ records how the cell modules decompose into simple $\brd$-modules and is defined as follows: The rows are labelled by $C$, the columns are labelled by $C_{p}$ and if $\lambda \in C$ and $\mu \in C_{p}$ then the $(\lambda,\mu)$th entry of the decomposition matrix is the multiplicity of $D^{(\frac{r-|\mu|}{2},\mu)}$ in $S^{(\frac{r-|\lambda|}{2},\lambda)}$.

In \cite{HartmannPaget}, Hartmann and Paget showed that large parts of the decomposition matrices of Brauer algebras can be recovered from decomposition matrices of smaller Brauer algebras and symmetric groups. In particular, there is an ordering of the simple modules such that certain symmetric groups occur along the diagonal and the decomposition matrix of a smaller Brauer algebra at the bottom right. In the following proposition, denote for any algebra $A$ by $D_A$ the decomposition matrix of $A$, provided it exists.
\begin{theo}[See Propositions 3.3  and 5.2 in \cite{HartmannPaget}]
\label{theo form of dec matrix} 
There is an ordering of the cell modules and the simple modules of the Brauer algebra such that $$  D_{ \brd }=\left( \begin{array}{cc}
D_{F\Sigma_r} & 0  \\ 
 \ast & D_{B_{r-2}(\delta)}
\end{array}\right).$$
In particular,  there is an ordering of simple and cell modules such that $D_{\brd}$ is lower triangular with $1$'s on the diagonal.
\end{theo}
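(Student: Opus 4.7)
The strategy combines two ingredients: an identification of the $t = 0$ sector of $B_r(\delta)$-modules with the category of $F\Sigma_r$-modules, and an idempotent-truncation argument relating the $t \geq 1$ sector to $B_{r-2}(\delta)$. I would order the cell modules $(t,\lambda)$ first by $t$ ascending and, within each $t$, by dominance on partitions of $r - 2t$, and similarly for the simples; the top-left block will correspond to $t = 0$ and the bottom-right to $t \geq 1$.

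For the top-left block, I would first show that the cell module $S^{(0,\lambda)} = S^\lambda \otimes V_{r,0}$ is isomorphic as a $B_r(\delta)$-module to the Specht module $S^\lambda$ with every Brauer diagram containing a horizontal arc acting as $0$. This follows from Definition~\ref{defnofaction}: since $V_{r,0}$ contains only the arc-free partial diagram $v$, any Brauer diagram $a$ with a horizontal arc makes $v \circ a$ have strictly more arcs than $v$, so $(m \otimes v)\cdot a = 0$; while for $a \in \Sigma_r$ the permutation $\pi(a,v)$ agrees with the natural $\Sigma_r$-action. Hence $[S^{(0,\lambda)} : D^{(0,\mu)}]_{B_r(\delta)} = [S^\lambda : D^\mu]_{F\Sigma_r}$, which yields the top-left block.

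For the bottom-right and top-right blocks I would use an idempotent truncation. Let $\epsilon \in B_r(\delta)$ be the diagram with a horizontal arc joining positions $r-1, r$ in both rows and vertical arcs elsewhere; then $\epsilon^2 = \delta\epsilon$, so for $\delta \neq 0$ the element $e = \delta^{-1}\epsilon$ is idempotent, and $eB_r(\delta)e \cong B_{r-2}(\delta)$ by deleting the last two strands (the case $\delta = 0$ requires a separate base-change or adjoined-idempotent argument). The exact functor $M \mapsto eM$ sends $S^{(t,\lambda)}_{B_r(\delta)}$ to $S^{(t-1,\lambda)}_{B_{r-2}(\delta)}$ for $t \geq 1$, sends $D^{(t,\mu)}_{B_r(\delta)}$ to $D^{(t-1,\mu)}_{B_{r-2}(\delta)}$ for $t \geq 1$, and annihilates the $t = 0$ row since $eS^{(0,\lambda)} = 0$ by the previous paragraph. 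Exactness then gives
$[S^{(t,\lambda)} : D^{(t',\mu)}]_{B_r(\delta)} = [S^{(t-1,\lambda)} : D^{(t'-1,\mu)}]_{B_{r-2}(\delta)}$
for $t,t' \geq 1$, which is exactly the bottom-right block, while any composition factor $D^{(t',\mu)}$ of $S^{(0,\lambda)}$ with $t' \geq 1$ would survive truncation and appear in $eS^{(0,\lambda)} = 0$, forcing the top-right block to vanish. The ``in particular'' statement follows by induction on $r$ together with the fact that $D_{F\Sigma_r}$ is lower triangular with $1$'s on the diagonal.

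The main obstacle will be the verification $eS^{(t,\lambda)} \cong S^{(t-1,\lambda)}$ as $B_{r-2}(\delta)$-modules: one must match the two partial-diagram bases (those $v \in V_{r,t}$ with an arc at $(r-1,r)$ correspond to partial diagrams on $r-2$ dots with $t-1$ arcs), transport the permutation $\pi(a,v)$ coherently through the isomorphism $eB_r(\delta)e \cong B_{r-2}(\delta)$, and carefully account for the $\delta$-factors generated by loops absorbed whenever $e$ acts. Handling $\delta = 0$ cleanly, where $\epsilon$ is nilpotent rather than a scalar multiple of an idempotent, is the secondary technical point.
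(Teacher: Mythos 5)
The paper does not prove this statement at all: it is quoted verbatim from Propositions 3.3 and 5.2 of \cite{HartmannPaget}, so there is no internal proof to compare against. Your reconstruction is the standard (and essentially the cited) argument, and for $\delta\neq 0$ it is correct as outlined: the span $J$ of diagrams with at least one horizontal arc is a two-sided ideal with $\brd/J\cong F\Sigma_r$, so $S^{(0,\lambda)}$ and $D^{(0,\mu)}$ are inflations of $S^{\lambda}$ and $D^{\mu}$, giving the block $D_{F\Sigma_r}$; and the exact truncation functor $M\mapsto eM$ with $e=\delta^{-1}\epsilon$, together with $eS^{(t,\lambda)}\cong S^{(t-1,\lambda)}$, $eD^{(t,\mu)}\cong D^{(t-1,\mu)}$ for $t\geq 1$ and $eS^{(0,\lambda)}=0$, gives the zero block and the block $D_{B_{r-2}(\delta)}$; unitriangularity then follows by induction on $r$ from James's theorem for $F\Sigma_r$.

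The one genuine gap is exactly the point you flag and then defer: $\delta=0$, which the theorem as stated is meant to cover. There $\epsilon^2=\delta\epsilon=0$, so $\epsilon$ is nilpotent and no rescaling produces an idempotent; moreover a ``base-change'' argument does not obviously help, since decomposition numbers are not stable under specialising a generic parameter to $\delta=0$. The clean fix is to replace $\epsilon$ by the \emph{offset} diagram $u=h_1(\mathrm{id})$ of this paper, i.e.\ the diagram with top arc $\{1,2\}$, bottom arc $\{2,3\}$ and the remaining dots joined by through strings: the computation in Lemma~\ref{lem properties of ht} shows $u^2=u$ with no factor of $\delta$, so $u$ is an idempotent for every $\delta$, it generates the same two-sided ideal $J$ (so the simples killed by $u$ are again exactly those with $t'=0$), and the identifications $uS^{(t,\lambda)}\cong S^{(t-1,\lambda)}$, $uB_r(\delta)u\cong B_{r-2}(\delta)$ go through by the same bookkeeping of partial diagrams you describe for $e$, now uniformly in $\delta$. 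With that substitution your argument proves the theorem in all cases; without it, the proposal only establishes the statement for $\delta\neq 0$.
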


\subsection{Monoids related to Brauer algebras}
Our main object of study will be a monoid $H$ consisting of scalar multiples of diagrams:

\begin{defn}
Fix a field $F$, a natural number $r$ and a parameter $\delta \in F$. Let $D$ be the set of all diagrams on $2r$ dots, and define a monoid $H$ by setting $$ H=\{ \delta^k d \ | \ k \in \N_0, d \in D \},$$ with multiplication given by concatenation.
\end{defn}

\begin{rk}
\label{rk monoid indep of field}
\begin{enumerate}[(i)]
\item Unless the parameter $\delta$ is zero or a root of unity, $H$ will be an infinite monoid.
\item If $\delta \neq 0$ then up to isomorphism the monoid $H$ only depends on the multiplicative order of $\delta \in F^{\times}$ but not on the field itself. Similarly, all monoids $H$ corresponding to diagrams with the same number of dots and $\delta=0$ are isomorphic.
\end{enumerate}
\end{rk}

Inspired by Section 2 of \cite{Mazorchuk}, we will now define three types of conjugation in the monoid $H$:

\begin{defn}[See Section 2 of \cite{Mazorchuk}]
\begin{enumerate}[(i)]
\item Two elements $d_1,d_2 \in H$ are called $\Sigma_r$-conjugate if and only if there exists a permutation diagram $\sigma \in \Sigma_r$ such that $\sigma \inv d_1 \sigma = d_2$.
\item Define a symmetric and reflexive relation $\sim'$ on $H$ by setting $d_1 \sim' d_2$ for $d_1,d_2 \in H$ if and only if there are $u,v \in H$ such that $d_1=uv$ and $d_2=vu$. Let $\sim_{(uv,vu)}$ be the equivalence relation obtained by taking the transitive closure of $\sim'$. If $d_1 \sim_{(uv,vu)} d_2$, we will say that $d_1$ and $d_2$ are $(uv,vu)$-conjugate.
\item Two elements $d_1,d_2 \in H$ are called $\chi$-conjugate if and only if for every matrix representation $\rho: \brd \to M_n(F)$ of $\brd$ the traces of the matrices $\rho(d_1)$ and $\rho(d_2)$ coincide. Here $M_n(F)$ denotes the set of $n \times n$ matrices with entries in $F$.
\end{enumerate}
\end{defn}

\begin{rk}
\begin{enumerate}[(i)]
\item Observe that  $\Sigma_r$-conjugacy implies  $(uv,vu)$-conjugacy which in turn implies  $\chi$-conjugacy.
\item In \cite{Mazorchuk}, the first type of conjugation is defined to be by conjugation by the group of units of $H$. However, it is sufficient to consider conjugation by elements of $\Sigma_r$ viewed as a subset of  $\brd$: Any diagram with at least one horizontal arc cannot be invertible since the identity has no horizontal arcs but horizontal arcs are always preserved in the multiplication. It is clear that diagrams with no horizontal arcs are precisely permutations. When $\delta$ is non-zero, it might also be the case that a scalar multiple of a permutation is invertible. Since, however, scalars commute with diagrams, multiplying diagrams by scalars  has no effect on conjugation.
\item The definition of conjugation in terms of characters is different from the definition in Section $2$ of \cite{Mazorchuk}. This follows since the Brauer algebra is not necessarily equal to the semigroup algebra $FH$ unless $\delta=1$.
\item By Remark~\ref{rk monoid indep of field}, it is clear that the first two types of conjugation do not depend on the underlying field of $H$ but only on the order of $\delta$.
\end{enumerate}
\end{rk}

\subsection{Modular reduction and lifting of roots of unity}
\label{section on reduction}
The triple $(S,R,F)$ is called a $p$-modular system if $R$ is a complete discrete valuation ring with unique maximal ideal $\Pi$, $S$ is its field of fractions of characteristic $0$ and $F$ is the residue field $R/\Pi$ of characteristic $p$. For a detailed account of $p$-modular systems, we refer the reader to Chapters 11ff in \cite{Landrock}. 
We will assume throughout this paper that $S$ contains the $|\Sigma_r|$th roots of unity
and that $F=R/\Pi$ is algebraically closed. Denote by $\overline{ \phantom{u} }:R \to R/\Pi=F$ the natural projection modulo $\Pi$, that is $\overline{u}=u+\Pi$ for every $u \in R$.

We will need the fact that there is a unique way of lifting roots of unity from $F$ to $R$:

\begin{prop}[See for example Remark 40.2 in \cite{Kuelshammer}]
\label{prop roots of 1 lift uniquely}
Let $k$ be a natural number which is not divisible by $p$ but divides $|\Sigma_r|$. Furthermore, let $U$ be the set of $k$th roots of unity in $S$ and let $V$ be the set of $k$th roots of unity in $F$. Then $U \subseteq R$ and the restriction of the natural projection modulo $\Pi$ to $U$ gives an isomorphism between $U$ and $V$.
\end{prop}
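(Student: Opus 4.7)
The plan is to establish the three assertions in sequence: that $U \subseteq R$, that reduction modulo $\Pi$ restricts to a group homomorphism $U \to V$, and that this homomorphism is a bijection. For the first step, I would observe that every $\zeta \in U$ is a root of the monic polynomial $x^k-1 \in R[x]$ and is therefore integral over $R$. Since $R$ is a discrete valuation ring, it is integrally closed in its field of fractions $S$, forcing $\zeta \in R$. Consequently $\overline{\zeta}$ is defined and satisfies $\overline{\zeta}^k = \overline{\zeta^k} = 1$, so lies in $V$, and since $\overline{\phantom{u}}$ is a ring homomorphism, its restriction to $U$ is a group homomorphism $U \to V$.

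For injectivity I would compute the kernel. Suppose $\zeta \in U$ with $\overline{\zeta} = 1$, and factor $x^k - 1 = (x-1)\,\Phi(x)$ where $\Phi(x) = 1 + x + x^2 + \cdots + x^{k-1}$. Either $\zeta = 1$, or $\Phi(\zeta) = 0$. In the latter case, reducing modulo $\Pi$ yields $\Phi(1) = k = 0$ in $F$, which contradicts the hypothesis $p \nmid k$ (since the image of $k \in \Z$ in the characteristic-$p$ field $F$ is nonzero). Hence $\zeta = 1$ and the kernel is trivial.

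Surjectivity will then follow from a counting argument. Since $S$ contains the $|\sr|$th roots of unity and $k$ divides $|\sr|$, the polynomial $x^k-1$ splits into $k$ distinct linear factors in $S$, so $|U| = k$. On the other hand, $\gcd(k,p) = 1$ means that the derivative $k x^{k-1}$ of $x^k-1$ shares no root with $x^k-1$ in $F$, so the polynomial is separable over $F$; since $F$ is algebraically closed, this gives $|V| = k$ as well. An injection between finite sets of equal cardinality is a bijection, so $U \to V$ is an isomorphism.

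The main subtlety is really the injectivity, which hinges on the arithmetic observation that $p \nmid k$ implies $\Phi(1) = k$ is invertible in $F$. Everything else is formal: the integrality argument together with the fact that a DVR is integrally closed in its field of fractions, and a standard separability and counting argument for the image. Alternatively, surjectivity could be proved by applying Hensel's lemma in the complete DVR $R$ to lift each of the $k$ distinct roots of $x^k - 1$ in $F$ to a root in $R$; I would mention this as an aside but prefer the cleaner counting proof.
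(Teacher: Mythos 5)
Your proof is correct. Note that the paper does not prove this proposition at all: it is quoted as a standard fact with a citation to Remark 40.2 of K\"ulshammer's lecture notes, so there is no in-paper argument to compare against; your write-up simply supplies the missing standard proof. Your route is sound at every step: integrality of roots of unity plus the fact that a DVR is integrally closed gives $U \subseteq R$; the factorization $x^k-1=(x-1)(1+x+\cdots+x^{k-1})$ together with $p \nmid k$ gives triviality of the kernel; and the count $|U|=k$ (using that $S$ has characteristic $0$ and contains the $|\Sigma_r|$th, hence all $k$th, roots of unity) forces surjectivity. Two small remarks. First, your counting argument is even cheaper than you present it: you do not need separability of $x^k-1$ over $F$ or algebraic closedness of $F$, since $|V|\leq k$ holds automatically (a degree-$k$ polynomial has at most $k$ roots in a field), and injectivity then gives $k=|U|\leq |V|\leq k$. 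Second, it is worth noticing that your preferred argument nowhere uses completeness of $R$, only that it is an integrally closed domain, whereas the alternative you mention (lifting each root of $x^k-1$ from $F$ via Hensel's lemma) genuinely needs completeness; this is a respect in which your counting proof is slightly more general than the textbook Hensel-type argument the citation points to.
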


Denote by $\hat{ \ }: V \to U$ the inverse map of the natural projection modulo $\Pi$, and fix a number $\hat{\delta} \in R$ such that $\overline{\hat{\delta}} = \delta \in F$. Notice that we use the $\hat{ \ }$-notation to signify that $\hat{\delta}$ is a lift of $\delta$ but it is not an image under the map $\hat{ \ }$.

\section{$\Sigma_r$-conjugacy classes}
In this section we will study the most classical form of conjugation in semigroups, namely by conjugation by the group of units of the semigroup. We will first only study conjugation in the diagram basis $D$ and extend this to the whole of $H$ later. In order to examine these conjugacy classes more closely we will introduce a new invariant, the generalized cycle type:

\begin{defn}
\label{defnGCT} For any diagram $d \in \brd$ define the generalized
cycle type (GCT) in the following way: Connect each dot in the top
row of $d$ with the dot in the bottom row below it. The connected
components $ \{ c_i \}$ of the resulting graph will be closed paths 
except in the special case of a
vertical arc. 

Associate to each connected component $c_i$ a string
of letters $s_i $ obtained in the following way: 
\begin{itemize}
\item If the connected component is a vertical arc then $s_i = T$. If not pick the top left dot in $c_i$ and follow the path
clock-wise.
\item Following the path, record in order for any through arc of the original diagram which is traversed a "$T$",
and for any horizontal arc of the original diagram a "$U$" if it was
in the upper row and an "$L$" if it was in the lower row. Continue this until you
get back to your starting point, see example below. The string
$s_i$ is then given by the recorded string of letters.
\end{itemize}
The generalized cycle type of the diagram $d$, denoted $\tau (d)$, is
then given by $$\tau(d) = \{ s_1,s_2, \ldots, s_q\}$$ where $q$ is
the total number of connected components.
\end{defn}

\begin{rk} This strengthens the notion
of a cycle type of a diagram as introduced by Ram in Section 2 of \cite{Ram} by recording more information about the types of arcs in each component. We will discuss the difference between these two notions in more detail in Section~\ref{section comparison ct and gct}.
\end{rk}

\begin{ex}
Consider the following diagram $d$: $$d=\begin{array}{c}
 \begin{xy}
\xymatrix@!=0.01pc{ \bullet \ar@{-}[r] & \bullet &\bullet \ar@{-}@/^/[rr] &
\bullet \ar@{-}[rd]&\bullet &\bullet \ar@{-}[r] & \bullet &
\bullet\ar@{-}[d] \\  \bullet \ar@{-}@/_/[rrr] & \bullet \ar@{-}[r]
& \bullet & \bullet & \bullet& \bullet\ar@{-}[r] & \bullet&  \bullet 
}
\end{xy} \end{array}.$$

Connecting top and bottom row we obtain: 
$$ \begin{array}{c} \begin{xy}
\xymatrix@!=0.01pc{  \bullet  \ar@{.}[d] \ar@{->}[r]^U  & \bullet \ar@{.}[d] &\bullet \ar@{.}[d] \ar@{->}@/^/[rr]^U & \bullet \ar@{.}[d] \ar@{<-}[rd]^T &\bullet \ar@{.}[d] &\bullet \ar@{.}[d] \ar@{->}[r]^U & \bullet \ar@{.}[d] & \bullet  \ar@{.}[d] \ar@{->}[d]^T \\
\bullet \ar@{}[u]^>{p_1} \ar@{<-}@/_/[rrr]_L & \bullet \ar@{->}[r]^L & \bullet & \bullet &
\bullet& \bullet\ar@{<-}[r]_L \ar@{}[u]^>{p_2} & \bullet &  \bullet \ar@{}[u]^>{p_3}}
\end{xy} \end{array}.$$ There are three connected components $c_1,c_2,c_3$, labelled from left to right. To compute $s_1$, we
start with the top left dot labelled $p_1$ and follow the path clockwise as indicated in the diagram. The first
edge we traverse is a horizontal arc in the upper row. This is followed by
two more horizontal arcs, one in the lower and one in the upper
row. Next a through arc is traversed and another horizontal arc in the
lower row. Thus, we conclude that $s_1 = ULUTL$. Similarly, we find $s_2 = UL$ and $s_3 = T$. Therefore the generalized cycle type of $d$ is given by $\{ULUTL,UL,T\}.$

\end{ex}

\begin{rk}
\label{rk Us and Ls alternate}
Notice that not all strings of letters from the alphabet $\{ U,L,T\}$ can occur. Suppose that while computing the string of some connected component an upper horizontal arc is traversed. Then the next edge must necessarily originate from the bottom row. Therefore, the edge can only be either an upward pointing through arc or a lower horizontal arc. If the edge which is traversed after an upper horizontal arc is an upward pointing through arc then the next edge will again be originating from the bottom row. It is thus clear that upper horizontal and lower horizontal arcs must alternate, so that for example a string $UTUL$ is impossible.
\end{rk}

For our purposes both the starting point as well as the direction
in which the paths are traversed are arbitrary:

\begin{defn}
Let $s = s_1 s_2 \ldots s_k$ be a string of $k$ letters with $s_j
\in \{ U,L,T \}$ for $j=1,\ldots,k$. Define two operations $\rho$ ("reversing") and
$\sigma$ ("shifting") by
\begin{align*} \rho(s)& = s_k s_{k-1} \ldots
s_1, \\ \sigma(s) & = s_2 s_3 \ldots s_k s_1.
\end{align*} These operations can be composed in the natural way and we have that  $ \rho \sigma = \sigma \inv \rho$. Two strings $s$ and $t$ are said to be equivalent if $t
= \sigma^n \rho^m (s)$ for $m,n \in \Z$, that is, if one string can be transformed into
the other by repeated shifting and reversing. Two generalized cycle types are said to
be equivalent if they contain the same strings up to equivalence.
\end{defn}

The following theorem justifies the definition of the generalized cycle type:

\begin{theo}
\label{theorem sigma conjugate iff gct the same}
Two diagrams in $\brd$ are $\Sigma_r$-conjugate if and only if their
generalized cycle types are equivalent.
\end{theo}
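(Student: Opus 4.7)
The plan is to exploit the fact that conjugation by a permutation diagram $\sigma \in \Sigma_r$ acts on any diagram as a consistent relabelling of columns: a direct computation with the concatenation rule shows that $d' := \sigma^{-1} d \sigma$ has an edge between positions $x$ and $y$ (with the top/bottom indicator unchanged) if and only if $d$ has an edge between $\sigma(x)$ and $\sigma(y)$. Two consequences follow. First, edge types ($U$, $L$, and $T$) are preserved. Second, the dotted identity edges in Definition~\ref{defnGCT} joining top-$i$ to bottom-$i$ are transported to the analogous edges for column $\sigma^{-1}(i)$. Hence the augmented graph used to define the GCT of $d$ is carried by this column relabelling onto that of $\sigma^{-1} d \sigma$, and the connected components correspond bijectively, with matching sequences of edge types.

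For the forward direction, suppose $d_1 = \sigma^{-1} d_2 \sigma$. By the above, every connected component of $d_2$ maps to one of $d_1$ displaying the same cyclic sequence of letters. When one records the respective strings starting from the top-left dot of each component and going clockwise, two discrepancies can arise: the top-left dot of the component in $d_1$ need not correspond to the top-left dot of the component in $d_2$ (since columns have been permuted), and the geometric meaning of ``clockwise'' can flip under a column permutation. These are precisely the ambiguities absorbed by the shift and reverse operations in the definition, so the two strings are equivalent and hence $\tau(d_1)$ and $\tau(d_2)$ are equivalent GCTs.

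For the backward direction, suppose $\tau(d_1)$ and $\tau(d_2)$ are equivalent. Fix a bijection of their connected components, and for each matched pair choose starting points and directions of traversal so that the recorded strings become literally equal. This produces aligned cyclic sequences of columns $i_1, \ldots, i_k$ from a component of $d_1$ and $j_1, \ldots, j_k$ from the matching component of $d_2$, with the property that the diagram edge between consecutive columns carries the same letter in both. Define $\sigma \in \Sigma_r$ by $\sigma(j_m) = i_m$ across all matched pairs; one then checks that $\sigma^{-1} d_1 \sigma = d_2$ by verifying edge by edge that $\sigma$ transports each $U$, $L$, and $T$ edge of $d_2$ to the corresponding edge of $d_1$. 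The main obstacle is the case of a $T$ letter: the string records only $T$ and not the orientation (top-to-bottom or bottom-to-top) of the through arc, so one must argue separately that, once the starting rows of the two traversals agree, the row state at every subsequent step also agrees. This is a parity argument using the alternation of $U$ and $L$ highlighted in Remark~\ref{rk Us and Ls alternate} together with the forced row swap along each dotted identity edge; once this alignment is established, every through arc of $d_2$ has the same orientation as its counterpart in $d_1$ and the edge-by-edge verification goes through.
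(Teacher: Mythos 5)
Your proposal is correct, and its forward direction is exactly the paper's argument: conjugation by a permutation merely relabels columns, preserving upper-horizontal, lower-horizontal, through and vertical arcs, so components are carried to components with the same cyclic letter sequence, and the shift/reverse equivalence absorbs the choice of starting dot and direction. The converse, however, is organized differently. The paper first conjugates each diagram into the cycle form of Definition~\ref{defn cycle} and then matches the two normal forms by a further rearrangement of columns, while you construct the conjugating permutation directly by aligning traversals of matched components and verifying the edges letter by letter. Your route is more self-contained and has the merit of making explicit the one real subtlety, namely that the letter $T$ does not record the orientation of a through arc; the paper absorbs this silently into the remark that the starting point and orientation of each cycle can be changed arbitrarily by rearranging columns. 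Your parity argument does settle it: the row from which the $m$-th diagram edge is traversed is determined recursively by the letters (it is forced to be the top row at each $U$ and the bottom row at each $L$, and, because of the intervening dotted identity edge, it flips after a $U$ or $L$ and is unchanged after a $T$), so when the common string contains at least one horizontal arc the row states of the two traversals agree automatically, whereas for a component whose string is a pure power of $T$ nothing is forced and you simply choose both traversals to start at a top dot with the diagram edge traversed first. Spelling out this dichotomy (forced alignment versus free choice) is the only detail compressed in your write-up. What the paper's formulation buys instead is the cycle normal form itself, which it reuses later, for instance in the reduction to cycle form in the $(uv,vu)$-conjugacy analysis.
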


\begin{proof}
\ \ \\
($\Longrightarrow$) Suppose $d, d' \in \brd$ are $\Sigma_r$-conjugate, that is there is $\pi
\in \Sigma_r$ such that $d' = \pi \inv d \pi$. Connect each dot in
the top row of both $d$ and $d'$ with the dot in the bottom row
vertically below it as in Definition~\ref{defnGCT}. For simplicity
we will call these slightly modified graphs $d$ and $d'$ as well. We
will show that conjugation by $\pi$ induces a graph isomorphism
between $d$ and $d'$. We have to specify a map $\phi$ from the
vertex set of $d$ to the vertex set of $d'$ such that whenever there
is an edge between the vertices $u$ and $v$ of $d$, then there is
also an edge between $\phi(u)$ and $\phi(v)$.

Label the top row of $d$ with $1,\ldots,r$ in increasing order from
left to right and the bottom row with $1',\ldots,r'$ in increasing
order from left to right. Define the map $\phi$ in the following
way: For a dot $k$ of the top row of $d$ define $\phi(k)$ to be the
$\pi(k)$th dot of the top row of $d'$ and for the bottom row of $d$
define $\phi (k')$ to be the $\pi(k)$th dot in the bottom row of
$d'$, see Figure~\ref{figure part 1}.

 Figure~\ref{figure part 1} illustrates the definition of the map $\phi$. It shows
an extract from the multiplication $\pi \inv d \pi$. Following the
paths from row $1$ to row $6$ of this picture, yields precisely
$d'$. Notice that the permutations $\pi \inv$ is just the
permutation $\pi$ but horizontally flipped over. Hence $\phi(k)$ and $\phi(k')$
are in the same column and thus conjugation preserves vertical arcs.
In fact, conjugation by a permutation simply rearranges columns.

Notice that $\phi$ is clearly a bijection. It remains to show that
$\phi$ preserves edges. By construction each dot of $d$ is attached
to two edges: the original edge coming from the diagram and the
vertical edge added for the calculation of the GCT. We have already
shown above that the latter is clearly preserved. To show that the
original edge, that is the edge from the original diagram $d$ before
adding vertical lines, is preserved as well, there are two case to be
considered: Either the edge is a horizontal or a through arc. These cases are illustrated in Figure~\ref{figure part 2}.
\begin{figure}[t]
\label{figure graph isom}
 \subfigure[Conjugation permutes columns.]{ \label{figure part 1}$  \begin{xy}
\xymatrix@!=0.01pc{  1  & 2 & \ldots & k & \pi(k)  \ar@{}[d]^>{\phi(k)} & r   \\
            \bullet \ar@{}[d]_{ \text{   \large  $\pi  ^{-1}$   } } & \bullet &\ldots & \bullet &\bullet \tra[ld]  &\bullet   \\
            \bullet \ar@{}[d]^>{1}  \dta[d]  & \bullet \ar@{}[d]^>{2} \dta[d] &\ldots & \bullet \ar@{}[d]^>{k} \dta[d] &\bullet  \dta[d] &\bullet \ar@{}[d]^>{r}  \dta[d]  \\
            \bullet \ar@{}[d]_{ \text{   \large  $d  \phantom{^{-1}}$   } } \ar@{}[d]^>{1'} & \bullet \ar@{}[d]^>{2'} &\ldots & \bullet\ar@{}[d]^>{k'} &\bullet  &\bullet\ar@{}[d]^>{r'}   \\
            \bullet   \dta[d]  & \bullet  \dta[d] &\ldots  & \bullet \dta[d]  &\bullet  \dta[d] &\bullet \dta[d]   \\
            \bullet \ar@{}[d]_{ \text{   \large  $\pi  \phantom{^{-1}}$   } } & \bullet &\ldots & \bullet \tra[rd] &\bullet  &\bullet   \\
            \bullet   & \bullet &\ldots & \bullet &\bullet\ar@{}[u]_<{\phi(k')}  &\bullet }
\end{xy}$} \hfill \subfigure[Conjugation preserves the type of an edge.]{  \label{figure part 2}$  \begin{xy}
\xymatrix@!=0.01pc{  1  & \ldots & \pi(l) \ar@{}[d]^>{\phi(l)} & l & \ldots & k & \pi(k) \ar@{}[d]^>{\phi(k)} & r   \\
            \bullet   \ar@{}[d]_{ \text{   \large  $\pi \inv $   } } & \ldots & \bullet \tra[rd] & \bullet &\ldots & \bullet &\bullet \tra[ld]  &\bullet  \\
            \bullet    \dta[d]^>{1} & \ldots  & \bullet \dta[d] & \bullet \dta[d]  \ar@{}[d]^>{l} &\ldots & \bullet \dta[d] \ar@{}[d]^>{k} &\bullet  \dta[d] &\bullet \ar@{}[d]^>{r} \dta[d]  \\
            \bullet \ar@{}[d]_{ \text{   \large  $d \phantom{^{-1}}$   } } \ar@{}[d]^>{1'}   & \ldots &   \bullet & \bullet \ar@{}[d]^>{l'} \arc[rr] &\ldots & \bullet \ar@{}[d]^>{k'} &\bullet  &\bullet\ar@{}[d]^>{r'}    \\
            \bullet \dta[d] & \ldots &   \bullet \dta[d] & \bullet  \tra[rru] \dta[d] &\ldots  & \bullet \dta[d]  &\bullet  \dta[d] &\bullet \dta[d]   \\
            \bullet \ar@{}[d]_{ \text{   \large  $\pi  \phantom{^{-1}}$   } }  & \ldots &   \bullet & \bullet &\ldots & \bullet  &\bullet  & \bullet   \\
            \bullet   & \ldots &   \bullet \ar@{}[u]^<{\phi(l')} \tra[ru] & \bullet &\ldots & \bullet &\bullet  &\bullet  }
\end{xy}$}
 \caption{Schematic overview of the multiplication $d'=\pi \inv d \pi$.}
\end{figure}
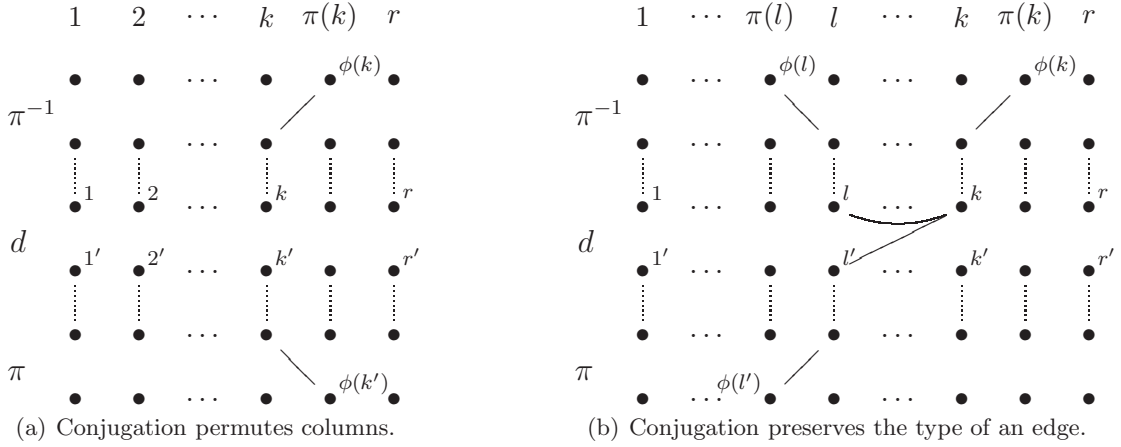

If the edge is horizontal say connects dots $l$ and dots $k$ in the
top row of $d$ then Figure~\ref{figure part 2} clearly shows that $\phi(l)$
and $\phi(k)$ are connected by an edge as well. Notice that $\phi$
hence preserves top and bottom arcs in the sense that if an arc is in the top/bottom row then its image will also be in the top/bottom row. The case of a
through arc where $k$ is connected to an edge $l'$ say in the bottom row is similar.
Again Figure~\ref{figure part 2} clearly shows that $\phi(k)$ and $\phi(l')$ are
connected by an edge.

So the two graphs are isomorphic and thus, in particular their
connected components. Notice that we have the additional property
that $\phi$ preserves upper horizontal, lower horizontal, vertical
and through arcs. Thus if $p_1,p_2,\ldots,p_l$ is a closed path in
$d$ where $p_i$ are distinct dots of $d$ and $p_1 = p_l$ then
$\phi(p_1),\phi(p_2),\ldots,\phi(p_l)$ is a closed path in $d'$ and
whenever the edge between $p_j$ and $p_{j+1}$ is a upper
horizontal/lower horizontal/vertical/through arc, then the edge
between $\phi(p_j)$ and $\phi(p_{j+1})$ will also be a upper
horizontal/lower horizontal/vertical/through arc. Since up to
equivalence the starting point and orientation of the path are
irrelevant, the GCT of both diagrams is the same, as required.

($\Longleftarrow$) Conversely, suppose two diagrams $d$ and $d'$ have the same GCT. We
have to show that they are conjugate. Notice that any diagram can be
put into cycle form via conjugation by a permutation: After adding
vertical lines as before, pick a connected component say with $2n$
dots and within this connected component pick any dot in the upper
row, say it is in column $c_1$. Following the path clockwise note
which columns are traversed in which order. If the order is $c_1,
c_2, c_3, \ldots, c_n$, conjugate by the permutation $\pi = \left(
                                           \begin{array}{cccc}
                                             c_1 & c_2 & \cdots & c_n \\
                                             1 & 2 & \cdots & n \\
                                           \end{array}
                                         \right)
$. Repeat this with the other connected components, of course
changing the permutation to $\pi' = \left(
                                           \begin{array}{cccc}
                                             c_1' & c_2' & \cdots & c_m' \\
                                             n+1 & n+2 & \cdots & n+m \\
                                           \end{array}
                                         \right)
$ this time, and so on. The diagram is now in cycle form. As shown
above, conjugation just rearranges columns. Thus the cycles of
$d$ and $d'$ can be rearranged by permutations to match each other.
Furthermore, we can also change the starting point and orientation of
each cycle arbitrarily by rearranging columns.
\end{proof}

Since conjugation by permutations does not create any loops and hence no additional powers of $\delta$, we can easily extend this to $H$:

\begin{cor}
Let $d_1,d_2 \in D$ and suppose $k,l \in \N_0$. Then $\delta^k d_1$ and $\delta^l d_2$ are $\Sigma_r$-conjugate if and only if $k=l$ and $d_1$ and $d_2$ have equivalent generalized cycle types.
\end{cor}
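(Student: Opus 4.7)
The plan is to reduce the corollary to Theorem~\ref{theorem sigma conjugate iff gct the same} by observing that conjugation by a permutation never introduces additional powers of $\delta$. The first step is therefore the combinatorial claim that if $\sigma \in \sr$ and $d \in D$, then the concatenation $\sigma\inv d\, \sigma$ produces no loops, so that $\sigma\inv d\, \sigma \in D$ and
$$\sigma\inv (\delta^k d_1)\,\sigma \;=\; \delta^k\,(\sigma\inv d_1 \sigma).$$
To see this, I would note that every arc of $\sigma^{\pm 1}$ is a through arc. Hence, after identifying the bottom row of $\sigma\inv$ with the top row of $d_1$, each inner dot is attached to exactly one arc of $\sigma\inv$, and that arc has its other endpoint on an outer row; symmetrically for the identification of the bottom of $d_1$ with the top of $\sigma$. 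Any candidate loop would have to stay among inner dots, but then it could never use any edge of $\sigma^{\pm 1}$, and the remaining edges (horizontal arcs of $d_1$ in the relevant rows) can at best pair up inner dots without ever closing into a cycle.

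With this observation in hand, both directions of the corollary follow almost immediately from Theorem~\ref{theorem sigma conjugate iff gct the same}. For $(\Leftarrow)$, if $k=l$ and $d_1,d_2$ have equivalent generalized cycle types, the theorem furnishes $\sigma \in \sr$ with $\sigma\inv d_1 \sigma = d_2$, so $\sigma\inv (\delta^k d_1)\sigma = \delta^k d_2 = \delta^l d_2$. For $(\Rightarrow)$, suppose $\sigma\inv (\delta^k d_1)\,\sigma = \delta^l d_2$ for some $\sigma \in \sr$. Using the scalar-tracking identity above, the left hand side equals $\delta^k(\sigma\inv d_1 \sigma)$ with $\sigma\inv d_1 \sigma \in D$; since the set $D$ is linearly independent in $\brd$, comparing scalars and basis diagrams yields $\delta^k=\delta^l$ and $\sigma\inv d_1 \sigma = d_2$. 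The latter gives equivalence of the generalized cycle types by Theorem~\ref{theorem sigma conjugate iff gct the same}, and the former gives $k=l$ once one interprets the elements of $H$ in their natural formal presentation.

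I do not expect any real obstacle. The substance of the corollary is already in Theorem~\ref{theorem sigma conjugate iff gct the same}, and the only new ingredient is the loop-free verification, which is elementary once one notes that permutation diagrams contain only through arcs. The sole delicate point is the bookkeeping of the scalar factor in the degenerate situations where $\delta$ has finite order or equals zero; in those cases the statement is to be read formally for $(k,d) \in \N_0 \times D$, and the rest of the argument is unchanged.
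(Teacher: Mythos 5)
Your proposal is correct and follows essentially the same route as the paper, whose entire argument is the observation that conjugation by a permutation creates no loops and hence no additional powers of $\delta$, after which Theorem~\ref{theorem sigma conjugate iff gct the same} does the work. You merely spell out the loop-freeness and the formal reading of scalars in $H$ (needed when $\delta$ is zero or a root of unity), points the paper leaves implicit.
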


\section{Conjugacy classes under the $(uv,vu)$-relation}
In this section we will determine sufficient conditions for elements of $H$ to be $(uv,vu)$-conjugate.  The complete determination of the $(uv,vu)$-conjugacy classes will be postponed until Section~\ref{section chi conjugacy}. We will start by defining the invariant which will be essential.

\subsection{The cycle type of a diagram}
\label{section comparison ct and gct}

The cycle type is an invariant of diagrams which was introduced by Arun Ram in \cite{Ram} for the study of ordinary characters in the semisimple case. It is defined as follows:
\begin{defn}[Section 3 of \cite{Ram}]
For any diagram $d \in \brd$ define the cycle type (CT) in the following way: Connect each dot in the top
row of $d$ with the dot in the bottom row below it. The connected
components $ \{ c_i \}$ of the resulting graph will be closed paths except in the special case of a
vertical arc. Associate to each connected component $c_i$ a natural number
 $n_i $ obtained in the following way:
\begin{itemize} \item Give each connected component an orientation by starting with an arbitrary dot and following the path in an arbitrary direction. While traversing one of the original edges, that is an edge of the original diagram, record the direction in which the edge was traversed.
\item Count the number of upward and the number of downward pointing arcs.
\item The number $n_i$ is then given by the absolute value of the difference of upward and downward pointing arcs.
\end{itemize}
The  cycle type of the diagram $d$, denoted $\xi(d)$  is 
then given by the partition with parts $n_i$, counting zeros.
We will say that $\mu$ is a cycle type of $\brd$ if it is the cycle type of some diagram $d \in \brd$.
\end{defn}

\begin{ex}
Consider the following diagram $a$: $$
\begin{array}{c} \begin{xy} 
\xymatrix@!=0.01pc{ \bullet \ar@{-}[r] & \bullet &\bullet \ar@{-}@/^/[rr] &
\bullet \ar@{-}[rd]&\bullet &\bullet \ar@{-}[r] & \bullet &
\bullet\ar@{-}[d] \\  \bullet \ar@{-}@/_/[rrr] & \bullet \ar@{-}[r]
& \bullet & \bullet & \bullet& \bullet\ar@{-}[r] & \bullet& \bullet 
}
\end{xy} \end{array}.$$

Connecting top and bottom row and adding orientations to each connected component, we obtain:

$$\begin{array}{c} \begin{xy} 
\xymatrix@!=0.01pc{ \bullet \ar@{.}[d] \ar@{->}[r] & \bullet \ar@{.}[d] &\bullet \ar@{.}[d] \ar@{->}@/^/[rr] & \bullet \ar@{.}[d] \ar@{<-}[rd]&\bullet \ar@{.}[d] &\bullet \ar@{.}[d] \ar@{->}[r] & \bullet \ar@{.}[d] & \bullet \ar@{.}[d] \ar@{->}[d] \\
\bullet \ar@{<-}@/_/[rrr] & \bullet \ar@{->}[r] & \bullet & \bullet &
\bullet& \bullet\ar@{<-}[r] & \bullet& \bullet }
\end{xy} \end{array}. $$

There are three connected components $c_1,c_2,c_3$. We can see that in the first component there is only one edge pointing upward and no edge pointing downward, so $n_1=1$. Similarly, we can see that $n_2=0$ and $n_3=1$. Thus $\xi(a)=(1^2,0)$.
\end{ex}

\begin{rk}
\label{rk which ct can occur}
Notice that the cycle types which occur for a given Brauer algebra $\brd$ are always of the form $(\alpha,0^{m_0})$ where $\alpha$ is a list of non-zero natural numbers such that $(\alpha)$ is a partition of $r-2k$ for $0 \leq k \leq [r/2]$. Furthermore, the non-negative integer $m_0$ satisfies $m_0 \leq  \frac{r-|(\alpha)|}{2}$. 
\end{rk}

The following proposition relates cycle type and generalized cycle type:

\begin{prop}
\label{prop same gct implies same ct}
Suppose two diagrams have the same generalized cycle type. Then they also have the same cycle type.
\end{prop}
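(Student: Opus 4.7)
The plan is to show that each string $s_i$ in the generalized cycle type of a diagram determines, up to string equivalence, the cycle-type number $n_i$ of the same connected component. Since both invariants are indexed by exactly the same set of connected components, matching strings then force matching $n_i$'s, which is the claim.

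The structural fact I would start from is that, in the graph used to compute both invariants, every dot has degree two: it is incident to exactly one original edge of the diagram and to exactly one added vertical edge. Consequently, the walk around a connected component alternates \emph{original} and \emph{vertical} edges. Added vertical edges always switch the row, horizontal arcs never do, and through arcs always do. Writing $\mathrm{t}$ and $\mathrm{b}$ for the top and bottom row respectively (to avoid confusion with the letter $T$), the row transitions produced by reading a letter and then crossing the following vertical edge are entirely deterministic: $U$ requires the current row to be $\mathrm{t}$ and sends it to $\mathrm{b}$; $L$ requires $\mathrm{b}$ and sends it to $\mathrm{t}$; $T$ leaves the row unchanged. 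Once a starting row is fixed, the string $s_i$ determines the row at every position of the walk.

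Next I would interpret each $T$-letter. A $T$ read from row $\mathrm{b}$ is a through arc traversed upward, while a $T$ read from $\mathrm{t}$ is one traversed downward. Writing $u(s_i)$ and $d(s_i)$ for the number of $T$'s read from $\mathrm{b}$ and from $\mathrm{t}$ respectively, I obtain $n_i=|u(s_i)-d(s_i)|$. I then check that this right-hand side is invariant under the two generators of string equivalence. Shifting by $\sigma$ merely relocates the starting dot along the same cycle in the same direction, so every through arc is traversed identically and $u,d$ are preserved. Reversing by $\rho$ walks the same cycle in the opposite direction, which flips every through arc and hence swaps $u$ and $d$, leaving $|u-d|$ unchanged. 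The vertical-arc special case $s_i=T$ with $n_i=1$ fits, since $|1-0|=1$. Therefore $n_i$ is well defined on equivalence classes of strings.

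Putting the pieces together: if $d$ and $d'$ have equivalent generalized cycle types then, by definition, there is a bijection between their connected components pairing each string with an equivalent one; by the preceding paragraph this bijection also pairs equal values of $n_i$, and $\xi(d)=\xi(d')$ follows. I do not anticipate a substantive obstacle; the only care required is in setting up the deterministic row-transition rules above, which rely solely on the degree-two property of the modified graph.
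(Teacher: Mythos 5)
Your argument is correct, but it takes a genuinely different route from the paper. The paper's proof is a two-line reduction: by Theorem~\ref{theorem sigma conjugate iff gct the same}, diagrams with equivalent generalized cycle types are $\Sigma_r$-conjugate, and since conjugation by a permutation merely permutes columns, the cycle type is preserved. You instead prove the stronger, more self-contained statement that the cycle type is a \emph{function} of the generalized cycle type: each string $s_i$ determines $n_i=|u(s_i)-d(s_i)|$ via the deterministic row-transition rules (after $U$ read from the top row the next letter is read from the bottom, after $L$ from the top, after $T$ from the same row), and this quantity is invariant under shifting and reversing. This is essentially a rigorous version of Remark~\ref{rk calculate CT from GCT}(ii), which the paper states without proof and never uses for this proposition. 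What each approach buys: the paper's proof is shorter but leans on the full strength of the conjugacy theorem; yours is independent of that theorem and yields an explicit formula for computing $\xi(d)$ from $\tau(d)$, which is of independent use. One small point worth making explicit in your write-up: when a string consists entirely of $T$'s the starting row is not determined by the string, so $u$ and $d$ individually are only defined up to swapping; but since you only use $|u-d|$, and the same symmetry is exactly what makes reversal harmless, the formula is well defined in this case too (and your treatment of the vertical-arc component $s_i=T$ already illustrates it).
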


\begin{proof}
By Theorem~\ref{theorem sigma conjugate iff gct the same}, the two
diagrams must be $\Sigma_r$-conjugate. Recall that conjugation by a
permutation simply permutes the columns of a diagram, see the proof
of Theorem~\ref{theorem sigma conjugate iff gct the same}. Hence by definition of the cycle type it follows that the two
diagrams must have the same cycle type.
\end{proof}

\begin{rk}
\label{rk calculate CT from GCT}
\begin{enumerate}[(i)]
\item The converse is not true. For example the diagrams $$\begin{array}{c} \begin{xy}  \xymatrix@!=0.01pc{ \bullet \tra[r] & \bullet  & \bullet \tra[ld] & \bullet \tra[ld]  & \bullet \tra[lllld]  \\ \bullet & \bullet  & \bullet  & \bullet\tra[r]  & \bullet  } \end{xy} \end{array}, \qquad \begin{array}{c} \begin{xy} \xymatrix@!=0.01pc{ \bullet \tra[r] & \bullet  & \bullet \tra[r] & \bullet  & \bullet \tra[lllld]  \\ \bullet & \bullet \tra[r] & \bullet  & \bullet  \tra[r]& \bullet  } \end{xy} \end{array}$$ have the same cycle type $(1)$, but do not even have the same number of horizontal arcs. Hence they cannot have the same generalized cycle type. There are also examples of diagrams with the same number of arcs in each connected component and equal cycle types, but with different generalized cycle types.

\item There is a simple method to calculate the cycle type from the generalized cycle type. By Remark~\ref{rk Us and Ls alternate} it is clear that if in a connected component a through arc is preceded by an upper horizontal arc, then the through arc must point upwards. Similarly, if a through arc is preceded by a lower horizontal arc then it must point downwards. Also, if a through arc is preceded by a through arc then both arcs point in the same direction.  For example, if the generalized cycle type is $\{ UTTLTUL \}$ then the first two through arcs are upward pointing and the third is downward pointing, so that the cycle type would be $(1)$.
\end{enumerate}
\end{rk}

\subsection{A necessary condition for $(uv,vu)$-conjugacy in $D$}
Again, we will first study $(uv,vu)$-conjugacy in $D$ and extend the result to $H$ at a later stage.  For technical reasons, we will work with a slightly stronger relation which will give us additional control of eigenvalues, as we shall see:

\begin{defn}
\label{defn of gct equiv rel}
Let $\sim_q$ be the reflexive and symmetric relation on $H$ defined by setting $d \sim_q d'$ for $d,d' \in H$ if and only if $d$ and $d'$ have the same cycle type and at least one of the following holds:
\begin{enumerate}[(i)]
\item The diagrams $d$ and $d'$ have the same generalized cycle type.
\item There is some $q \in H$ such that $d=qd$ and $d'=dq$.
\item There is some $q \in H$ such that $d'=qd'$ and $d=d'q$. 
\end{enumerate}
Define an equivalence relation $\sim$ by taking the transitive closure of $\sim_q$.

Since diagrams with the same generalized cycle type are $\sim$-equivalent, we can extend $\sim$ to an equivalence relation on generalized cycle types. Thus, we will say that two generalized cycle types $\tau_1$ and $\tau_2$ are equivalent, denoted $\tau_1 \sim \tau_2$, if and only if there are elements $d_1,d_2 \in D$ with generalized cycle types $\tau_1$ and $\tau_2$, respectively, such that $d_1 \sim d_2$.
\end{defn}

\begin{rk}
\begin{enumerate}[(i)]
\item Notice that if $d_1,d_2 \in H$ are $\Sigma_r$-conjugate then they are also $(uv,vu)$-conjugate. Thus elements with the same generalized cycle type are $(uv,vu)$-conjugate by Theorem~\ref{theorem sigma conjugate iff gct the same}. 
\item By the first remark and the definition of $\sim$, we have that if $d_1,d_2 \in H$ then $d_1 \sim d_2$ implies $d_1 \sim_{(uv,vu)} d_2$. 
\end{enumerate}
\end{rk}

Our first aim is to show that diagrams with the same cycle type are $\sim$-equivalent:

\begin{theo}
\label{theo ct determined uv conjugation}
Let $d_1,d_2 \in D$ be elements with the same cycle type. Then $d_1 \sim d_2$  and, in particular, $d_1$ and $d_2$ are $(uv,vu)$-conjugate.
\end{theo}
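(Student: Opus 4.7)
The plan is to reduce both $d_1$ and $d_2$ to a common canonical representative $c(\xi) \in D$ depending only on the cycle type $\xi$. By transitivity of $\sim$ this immediately yields $d_1 \sim c(\xi) \sim d_2$, and $(uv,vu)$-conjugacy then follows from the remark succeeding Definition~\ref{defn of gct equiv rel}. For $\xi=(n_1,\ldots,n_k,0^m)$ with $n_1 \geq \cdots \geq n_k > 0$, I would take $c(\xi)$ to be the cycle-form diagram whose first $n_1$ columns realise the cyclic permutation $(1\,2\,\cdots\,n_1)$, whose next $n_2$ columns realise an $n_2$-cycle, and so on, with the final $2m$ columns filled by $m$ disjoint pairs of horizontal arcs (one upper and one lower in each pair). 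A direct check shows that $\xi(c(\xi))=\xi$ and that the generalized cycle type of $c(\xi)$ is $\{T^{n_1},\ldots,T^{n_k},UL,\ldots,UL\}$ with $m$ copies of $UL$.

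Since $\sim$ contains $\Sigma_r$-conjugacy, by clause (i) of Definition~\ref{defn of gct equiv rel} together with Theorem~\ref{theorem sigma conjugate iff gct the same}, I may freely permute the columns of any diagram. Using this, I may assume both $d_1$ and $d_2$ are already in cycle form with their cycles appearing left to right in decreasing order of size, and within each cycle I may freely rotate the starting column. The problem therefore reduces to a single-cycle statement: given a cycle $c$ with cycle type $n$, show that $c$ is $\sim$-equivalent to the canonical cycle with GCT $\{T^n\}$ if $n > 0$, and to a single $UL$ cycle if $n = 0$.

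The heart of the argument is a local move within a cycle. If the GCT string of $c$ contains a through arc preceded by an upper horizontal arc (hence upward, by Remark~\ref{rk calculate CT from GCT}) paired with a later through arc preceded by a lower horizontal arc (hence downward), then I claim $c$ is $\sim$-equivalent to the cycle obtained by replacing this cancelling pair of through arcs by a $UL$-pair of horizontal arcs. To realise this move I would construct an element $q \in H$ so that $qc = c$ as an identity in $H$ while $cq$ is the modified cycle; $q$ is built from a small diagram matching $c$ on the relevant columns, padded with partial identities elsewhere, so that no loops arise in $qc$. The dual move, converting a $UL$-pair into a cancelling pair of through arcs, is obtained by interchanging the roles of $c$ and $cq$ in the $\sim_q$ relation. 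Iterating these moves eliminates all opposing through-arc pairs from each cycle and produces the canonical GCT, after which a final $\Sigma_r$-conjugation puts the cycle into its standard form.

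The main obstacle will be the explicit construction of the elements $q$ in each case and the verification that $qc = c$ holds in $H$ without the creation of any loops, so that no extraneous power of $\delta$ appears and the move remains within the basis $D$. This is a combinatorial check that exploits both the freedom to rearrange columns via $\Sigma_r$-conjugacy, so that the cancelling pair can be made adjacent, and the ability to extend $q$ by identity-like partial diagrams on columns unaffected by the local move.
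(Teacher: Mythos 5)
There is a genuine gap, and it sits at the very start: your canonical representative $c(\xi)$ does not exist with the properties you claim. For $\xi=(n_1,\ldots,n_k,0^m)$ the diagram you describe occupies $\sum_i n_i + 2m$ columns, and this is strictly less than $r$ whenever some connected component of $d_1$ contains both horizontal and through arcs (or a balanced component spanning more than two columns) — e.g.\ the diagram in $B_3(\delta)$ with generalized cycle type $\{ULT\}$ has cycle type $(1)$, so your $c(\xi)$ would use one column out of three. If you pad $c(\xi)$ with extra $UL$ components to fill the remaining columns, it acquires extra $0$ parts and no longer has cycle type $\xi$, so the claim $\xi(c(\xi))=\xi$ fails. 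The same defect kills your single-cycle reduction: a cycle with string $(UL)^jT^n$, $j\geq 1$, is \emph{not} $\sim$-equivalent to a pure $T^n$ cycle together with $j$ detached $UL$ components, because $\sim_q$ requires equal cycle types by Definition~\ref{defn of gct equiv rel} and detaching a $UL$ component creates an additional zero part. This is not a technicality one can argue around: Theorem~\ref{thm char of arbitrary diagram} shows that splitting off a $UL$ pair costs a factor of $\delta$ (one gets $d\sim\delta d'$, not $d\sim d'$), and Corollary~\ref{cor determination of uvvu conjugacy} shows that for $\delta$ not a root of unity, diagrams whose cycle types differ only in the number of zeros are not even $(uv,vu)$-conjugate. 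So your final separation step, and hence the whole reduction, cannot be carried out.

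The rest of your strategy — pass to cycle form by $\Sigma_r$-conjugation, then perform local moves realized by explicit elements $q$ with $qd=d$ and $d'=dq$, padded by identity-like columns — is exactly the paper's mechanism (its Lemma~\ref{lem gct calculus}), and your ``cancelling pair of through arcs $\leftrightarrow$ $UL$ pair inside one component'' move is in the right spirit, since it preserves connectivity and hence the cycle type. What must change is the target: the surplus $UL$ strings cannot be set free as separate components but must be kept attached to a nonzero component. The paper's canonical generalized cycle type is $\{(UL)^{t}T^{\mu_1},T^{\mu_2},\ldots,T^{\mu_k},UL,\ldots,UL\}$ with exactly $m$ detached $UL$'s (those already accounting for the zero parts of $\mu$) and all remaining $UL$ strings absorbed into the first component; reaching it requires, besides your within-cycle moves, a move transferring $UL$ strings between components without disconnecting anything (part (iv) of the paper's lemma, $\{Z,ULZ'\}\sim\{ZLU,Z'\}$). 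Without such a transfer move and with your canonical form as stated, the argument proves a false statement and so cannot be completed.
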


The proof of this theorem is quite combinatorial. It will involve performing certain operations on
generalized cycle types.  Assume that
$\mu=(\mu_1,\mu_2,\ldots,\mu_k,0^m)$ is the common cycle type (CT)
of the diagrams $d_1$ and $d_2$ where $\mu_i \neq 0$ for
$i=1,\ldots,k$. We will show that there is some diagram $d$ of
generalized cycle type (GCT) $$\{
(UL)^{r-|\mu|}T^{\mu_1},T^{\mu_2},T^{\mu_3}, \ldots,
T^{\mu_k},UL,UL,\ldots,UL \}$$ (where at the end $UL$ occurs $m$
times) which is $\sim$-equivalent to both of them.

We will prove the following relations which are going to be crucial:

\begin{lem}
\label{lem gct calculus} Let $Z$ and $Z'$ be strings of letters from
the alphabet $\{ U,L,T \}$, and let $\rho$ (reversing) and $\sigma$
(shifting) be as in Definition~\ref{defnGCT}. Then
\begin{enumerate}[(i)]
\item $ \{Z \} \sim \{ \rho(Z) \} \text{ \ and \ } \{ Z \} \sim \{ \sigma(Z) \}, $ \label{gctcalc1}
\item \label{gctcalc2}   $ \{ ULTZ\} \sim \{ULZT\},$
\item  \label{gctcalc3}   $\{TLTZ\} \sim \{LULZ\},$
\item   \label{gctcalc4}  $ \{ Z,ULZ'\} \sim \{ ZLU,Z'\}.$
\end{enumerate}
\end{lem}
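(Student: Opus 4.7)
The plan is to dispatch (i) directly from the definition of the generalized cycle type, and for (ii)--(iv) to exhibit in each case explicit representatives together with an element witnessing $\sim_q$-equivalence.

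For (i), observe that in the definition of $\tau(d)$ the starting dot and the direction of traversal of each component are arbitrary choices. Cyclic shifting a string amounts to starting the traversal of the component at a different dot, while reversing corresponds to traversing the component counterclockwise rather than clockwise. Hence a single diagram $d$ realises $\{Z,\ldots\}$, $\{\sigma(Z),\ldots\}$ and $\{\rho(Z),\ldots\}$ simultaneously, so clause (i) of $\sim_q$ gives the equivalence at once.

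For (ii)--(iv), by Theorem~\ref{theorem sigma conjugate iff gct the same} two diagrams with the same GCT are $\Sigma_r$-conjugate and hence already $\sim$-equivalent, so it suffices to produce, for each identity, \emph{one} pair $d,d' \in D$ realising the two GCTs together with an element $q \in H$ such that $qd=d$ and $dq=d'$ (invoking clause (ii) of $\sim_q$; the symmetric choice of clause (iii) works equally well). I would draw $d$ in a convenient canonical form in which the modified component occupies a small block of columns with the rest of the diagram aligned vertically, and then construct $q$ as a small local modification concentrated on those columns: $q$ reproduces the top of $d$ exactly on the relevant dots so that $qd=d$, while its bottom row performs the desired re-routing so that $dq$ realises the new GCT. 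For (ii), the re-routing slides a single through arc past the substring $Z$; for (iii), it replaces a $TLT$ detour across the rows by an $LUL$ zigzag within the rows; for (iv), it moves an upper and a lower horizontal arc from one component to another, so that the local modification must straddle two components at once.

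The main obstacle is that the identity $qd=d$ must hold \emph{exactly} in $H$, with no stray factor of $\delta$ arising from loops closed in the concatenation. This forces $q$ to be designed so that its top and bottom match the local structure of $d$ precisely enough that no closed loops are formed, which is what makes (iv) the most delicate case: the witness $q$ now has to act coherently across two distinct components. A secondary check, needed for clause (ii) of $\sim_q$ to be applicable at all, is that each transformation preserves the cycle type; this is a bookkeeping exercise based on the direction rules of Remark~\ref{rk calculate CT from GCT}. For example, in (iii) the two $T$'s of $TLT$ are forced to point in opposite directions (the intervening $L$ swaps the row), so they contribute $0$ to the net through-arc count of the component, matching the zero contribution of $LUL$; the analogous verifications go through for (ii) and for each of the two components in (iv).
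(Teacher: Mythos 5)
Your overall strategy is the paper's: part (i) follows because the starting dot and orientation in Definition~\ref{defnGCT} are arbitrary, and for (ii)--(iv) one fixes a single representative diagram in a local normal form and exhibits a witness $q$ for which one product returns $d$ while the other realises the new GCT, then checks that the cycle types agree. Your reduction to a single representative pair via Theorem~\ref{theorem sigma conjugate iff gct the same} is fine, your identification of the two dangers (loops creating stray powers of $\delta$, and preservation of the cycle type) is exactly right, and your direction count for (iii) reproduces the paper's computation; whether one arranges $qd=d$, $dq=d'$ or the mirror image is immaterial.

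The gap is that the witnesses are never actually produced, and for this lemma they \emph{are} the proof. The one concrete recipe you offer --- ``$q$ reproduces the top of $d$ exactly on the relevant dots so that $qd=d$, while its bottom row performs the re-routing'' --- does not deliver this: if the copied row is the one glued against the arcs of $d$ it imitates, the concatenation closes loops (an upper arc of $d$ on dots $1,2$ met by an arc of $q$ on the same dots yields a factor $\delta$, so the product is no longer $d$ in $H$); if instead the re-routing row is the one glued to $d$, nothing forces the product to equal $d$ at all. The witnesses that actually work are small skew diagrams mixing horizontal arcs, one through arc and verticals --- for (ii), for instance, a vertical on column $1$, a through arc from top dot $2$ to bottom dot $4$, an upper arc on dots $3,4$ and a lower arc on dots $2,3$, identity elsewhere --- chosen so that on the identity side the arcs of $d$ are re-created along longer, loop-free paths, while the other product moves exactly one arc past $Z$ with the edges into $x$ and $y$ keeping their type and orientation; case (iv) needs such a $q$ straddling two components simultaneously. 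Until such a $q$ is written down for each of (ii), (iii), (iv) and both concatenations are traced (no loops, hence no stray $\delta$; $Z$ and $Z'$ untouched), the existence of the witnesses --- which is essentially what the lemma asserts --- has not been established; the cycle-type checks for (ii) and (iv), though routine via Remark~\ref{rk calculate CT from GCT}, are likewise only asserted.
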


\begin{proof}

\begin{enumerate}[(i)]
\item Since the GCT is only defined up to reversing and shifting, both $\{ \rho(Z) \} $ and $\{ \sigma(Z) \}$ have the same GCT as $\{ Z \}$. The first part thus follows.
\item Let $d$ have cycle type $\{ ULTZ \}$. Thus without loss of generality, $d$ is of the form
$$ \begin{array}{c} \begin{xy}   \xymatrix@!=0.01pc{    \bullet \rtra[r]^U & \bullet & \bullet \rtra[ddr]^T & \bullet \rtra[drr] & & \\  & & & & \cdots & \bullet^x & \cdots &\bullet^y \\ \bullet  \ltra[rrrrrrru] &\bullet \rtra[r]_L  & \bullet& \bullet & & } \end{xy} \end{array}.$$ Here we have only depicted the first four columns of the diagram and marked the two dots which are connected to the bottom left and top right dots of these first four columns by $y$ and $x$, respectively.
Furthermore, $x$ and $y$ have been drawn in the middle of the
diagram in order to show that no assumption has been made about
whether they are in the top row or the bottom row. We have also
drawn the orientation in which the GCT is to be read.

 The idea is to specify a diagram $q \in \brd$ such that $qd= d$ and such that $d':=dq$ has the same CT and the required GCT.
Let $q$ be the diagram $$\begin{array}{c} \begin{xy} \xymatrix@!=0.01pc{   \bullet \tra[d] &  \bullet \tra[drr] & \bullet  \ar@{-}[r] & \bullet &  \bullet \tra[d] & \ldots & \bullet \tra[d]   \\ \bullet  & \bullet \tra[r]  & \bullet    & \bullet    &  \bullet  & \ldots &  \bullet } \end{xy} \end{array}. $$
Then $qd$ is given by $$\begin{array}{c} \begin{xy}   \xymatrix@!=0.01pc{     \bullet
\tra[d] &  \bullet \tra[drr] & \bullet  \ar@{-}[r] & \bullet &
\bullet \tra[d] & \ldots & \bullet \tra[d]   \\ \bullet \dta[d] &
\bullet \tra[r] \dta[d] & \bullet \dta[d]  & \bullet \dta[d] &
\bullet \dta[d] & \cdots &  \bullet \dta[d] & \\ \bullet \tra[r] &
\bullet & \bullet \tra[ddr] & \bullet \tra[drr] & & & \\  & & & &
\cdots & \bullet^x & \cdots &\bullet^y \\ \bullet  \tra[rrrrrrru]
&\bullet \tra[r]  & \bullet& \bullet & & } \end{xy} \end{array} = \begin{array}{c} \begin{xy}  \xymatrix@!=0.01pc{  \bullet \rtra[ddrrr]
&\bullet \rtra[rrrrd] & \bullet \ltra[r]& \bullet^p& & & & \\  & & &
& \cdots & \bullet^x& \cdots &\bullet^y \\  \bullet \ltra[rrrrrrru]&
\bullet \ltra[r]&\bullet &\bullet & & & & } \end{xy} \end{array}. $$ Starting at
the dot labelled $p$ and traversing the diagram in the direction
indicated, we see that the GCT of this diagram is $\{ ULZT \}$.
Notice that the string of letters $Z$ is unchanged as well as the
edges towards $x$ and $y$ have not changed their type nor has the
direction from which we travel from $x$ to $y$. It remains to show
that $dq=d$. Indeed, $$ dq = \begin{array}{c} \begin{xy}
\xymatrix@!=0.01pc{ \bullet \tra[r] & \bullet & \bullet \tra[ddr] &
\bullet \tra[drr] & & \\  & & & & \cdots & \bullet^x & \cdots
&\bullet^y \\ \bullet
\tra[rrrrrrru] &\bullet \tra[r]  & \bullet& \bullet & &   & \\
\bullet \tra[d] \dta[u] &  \bullet \dta[u] \tra[drr] & \bullet
\dta[u] \ar@{-}[r] & \bullet \dta[u] &  \bullet \dta[u]  \tra[d] &
\ldots & \bullet \dta[u] \tra[d]   \\ \bullet & \bullet \tra[r]   &
\bullet    & \bullet    &  \bullet   & \ldots &  \bullet   }
\end{xy}   \end{array} = d. $$
 Notice that we can compute the cycle type just from its GCT by Remark~\ref{rk calculate CT from GCT}. 
Thus the cycle type of $\{ULTZ\}$ is just $(|-1+z|)$ where $z$ is the
difference between upward and downward pointing arrows in $Z$.
 Similarly, for $\{ ULZT\}$ we get the cycle type $(|z-1|)$ since the through arc at the end of the cycle is followed by an upper row arc and hence must be preceded by a lower row arc. Therefore the CTs are equal as claimed.
\item Now suppose $d$ has cycle type $\{ TLTZ \} $. Thus without loss of generality $d$ is of the form
$$\begin{array}{c} \begin{xy} \xymatrix@!=0.01pc{  \bullet \rtra[rrrrrd] &\bullet & \bullet & \bullet \ltra[rrrrd] & & & & \\ & & & & \cdots & \bullet^x & \cdots &\bullet^y \\ \bullet \ltra[uur]^T &\bullet \ltra[r]^L  & \bullet &\bullet_p \rtra[luu]^T  & & & & } \end{xy} \end{array}.$$ Here again $p$ denotes the starting point from where we read the GCT in the direction indicated. Let $q$ be $$\begin{array}{c} \begin{xy} \xymatrix@!=0.01pc{  \bullet \tra[r] &\bullet & \bullet \tra[lld]& \bullet \tra[d] & &\cdots & & \bullet \tra[d] \\  \bullet &\bullet \tra[r]  & \bullet &\bullet  & & \cdots & & \bullet } \end{xy} \end{array}.  $$ Then $qd$ is given by $$\begin{array}{c} \begin{xy} \xymatrix@!=0.01pc{  \bullet \tra[r] &\bullet & \bullet \tra[lld]& \bullet \tra[d] & &\cdots & & \bullet \tra[d] \\  \bullet &\bullet \tra[r]  & \bullet &\bullet  & & \cdots & & \bullet \\ \bullet \tra[rrrrrd] \dta[u] &\bullet \dta[u] & \bullet \dta[u] & \bullet \tra[rrrrd] \dta[u] & \dta[u]&\dta[u] & \dta[u]& \dta[u] \\ & & & & \cdots & \bullet^x & \cdots &\bullet^y \\ \bullet \tra[uur] &\bullet \tra[r]  & \bullet &\bullet \tra[luu]  & & & & } \end{xy} \end{array}   = \begin{array}{c} \begin{xy} \xymatrix@!=0.01pc{  \bullet \rtra[r]^U &\bullet & \bullet \rtra[rrrd] & \bullet \ltra[rrrrd] & & & & \\ & & & & \cdots & \bullet^x & \cdots &\bullet^y \\ \bullet \larc[rrr]_L &\bullet \rtra[r]^L  & \bullet &\bullet^p  & & & & } \end{xy} \end{array}.  $$ Starting at $p$ and continuing in the direction indicated, we can see that this diagram has cycle type $\{ LULZ\}$ since the edges ending in $x$ and $y$ are still attached to the top row. Hence their type has not changed and we also have the same orientation for $Z$. Again it remains to show that $dq=d$ which is easily verified:
$$\begin{array}{c} \begin{xy} \xymatrix@!=0.01pc{  \bullet \tra[rrrrrd] &\bullet & \bullet & \bullet \tra[rrrrd] & & & & \\ & & & & \cdots & \bullet^x & \cdots &\bullet^y \\ \bullet \tra[uur] &\bullet \tra[r]  & \bullet &\bullet \tra[luu]  & & & &  \\     \bullet \dta[u] \tra[r] &\bullet  \dta[u] & \bullet \dta[u] \tra[lld]& \bullet  \dta[u] \tra[d] & &\cdots & & \bullet  \dta[u] \tra[d] \\  \bullet &\bullet \tra[r]  & \bullet &\bullet  & & \cdots & & \bullet   } \end{xy} \end{array}.$$

 Finally, notice that the CT of $\{TLTZ\}$ is just $(|1-1+z|)=(|z|)$ and the CT of $\{ LULZ\}$ is $(|z|)$ as well.

\item Suppose $d$ has cycle type $\{ Z,ULZ'\}$. Hence without loss of generality $d$ is of the form $$\begin{array}{c} \begin{xy} \xymatrix@!=0.01pc{  & & & & \bullet \ltra[lllld] &\bullet^{p_2} \rtra[r]^U & \bullet & \bullet \rtra[rrd] & & & & \\ \bullet^{x_1} & \cdots & \bullet^{y_1} & \cdots &  & & & & \cdots & \bullet^{x_2} & \cdots &\bullet^{y_2} \\ & & & & \bullet_{p_1} \rtra[llu] & \bullet \ltra[rrrrrru] &\bullet \rtra[r]_L  & \bullet  & & & & } \end{xy} \end{array}.$$ Here we have as usual drawn the not necessarily distinct dots $x_1,y_1,x_2,y_2$ in the middle to signify that no assumption has been made about them being in the top or bottom row nor about whether they are on the left or right of the explicitly drawn dots. The GCT of the leftmost component is read starting at $p_1$ in the direction indicated and corresponds to the string $Z$. In the other component corresponding to $Z'$ we start with dot $p_2$ and continue in the direction indicated. Let $q$ be $$\begin{array}{c} \begin{xy} \xymatrix@!=0.01pc{  \bullet \tra[d] & \cdots & \bullet \tra[d] & \bullet \tra[d] & \bullet \tra[rrd] &\bullet \tra[r] & \bullet  & \bullet \tra[d] & \bullet \tra[d] & \cdots & \cdots & \bullet \tra[d]  \\ \bullet & \cdots & \bullet & \bullet & \bullet \tra[r] & \bullet &\bullet  & \bullet  & \bullet & \cdots & \cdots & \bullet } \end{xy} \end{array}. $$
Then the product $dq$ is just
$$\begin{array}{c} \begin{xy} \xymatrix@!=0.01pc{  & & & & \bullet \tra[lllld] &\bullet \tra[r] & \bullet & \bullet \tra[rrd] & & & & \\ \bullet^{x_1} & \cdots & \bullet^{y_1} & \cdots &  & & & & \cdots & \bullet^{x_2} & \cdots &\bullet^{y_2} \\ & & & & \bullet \tra[llu] & \bullet \tra[rrrrrru] &\bullet \tra[r]  & \bullet  & & & & \\  \bullet \tra[d] \dta[u] & \cdots & \bullet \dta[u]  \tra[d] & \bullet \dta[u]  \tra[d] & \bullet  \dta[u]  \tra[rrd] &\bullet  \dta[u]  \tra[r] & \bullet  \dta[u]  & \bullet  \dta[u]  \tra[d] & \bullet  \dta[u]  \tra[d] & \cdots & \cdots & \bullet \dta[u]  \tra[d]  \\ \bullet & \cdots & \bullet & \bullet & \bullet \tra[r] & \bullet &\bullet  & \bullet  & \bullet & \cdots & \cdots & \bullet  } \end{xy} \end{array} $$ which can be seen to be equal to
$$\begin{array}{c} \begin{xy} \xymatrix@!=0.01pc{  & & & & \bullet \ltra[lllld] &\bullet \rtra[r]^U & \bullet & \bullet^{p_2} \rtra[rrd] & & & & \\ \bullet^{x_1} & \cdots & \bullet^{y_1} & \cdots &  & & & & \cdots & \bullet^{x_2} & \cdots &\bullet^{y_2} \\ & & & & \bullet \rtra[r]_L &\bullet  & \bullet_{p_1} \rtra[llllu] &\bullet \ltra[rrrru]  & & & & } \end{xy} \end{array}. $$ Starting at $p_1$ and $p_2$, respectively, and continuing in the direction indicated, it follows that the cycle type of $dq$  is $\{ ZLU, Z'\}$ since the dots connected to $x_1,y_1,x_2$ and $y_2$ are still in the same row as before and strings $Z$ and $Z'$ are still traversed in the same direction. On the other hand $qd$ is given by $$\begin{array}{c} \begin{xy} \xymatrix@!=0.01pc{   \bullet \tra[d] & \cdots & \bullet \tra[d] & \bullet \tra[d] & \bullet \tra[rrd] &\bullet \tra[r] & \bullet  & \bullet \tra[d] & \bullet \tra[d] & \cdots & \cdots & \bullet \tra[d]  \\ \bullet \dta[d]  & \cdots & \bullet  \dta[d] & \bullet \dta[d]  & \bullet \tra[r] \dta[d]  & \bullet  \dta[d] &\bullet   \dta[d] & \bullet  \dta[d]  & \bullet \dta[d]  & \cdots & \cdots & \bullet \dta[d]     \\ & & & & \bullet \tra[lllld] &\bullet \tra[r] & \bullet & \bullet \tra[rrd] & & & & \\ \bullet^{x_1} & \cdots & \bullet^{y_1} & \cdots &  & & & & \cdots & \bullet^{x_2} & \cdots &\bullet^{y_2} \\ & & & & \bullet \tra[llu] & \bullet \tra[rrrrrru] &\bullet \tra[r]  & \bullet  & & & & } \end{xy} \end{array} $$ which is just equal to $d$. To complete the proof notice that if the differences of upward and downward pointing arrows in $Z$ and $Z'$ are $z$ and $z'$, respectively, then the CT of $\{ Z, ULZ'\}$ is $(|z|,|z'|)$ and the CT of $\{ ZLU, Z'\}$ is again $(|z|,|z'|)$.
\end{enumerate}

\end{proof}
We are now in a position to prove the following lemma, from which
Theorem~\ref{theo ct determined uv conjugation} will follow as a
corollary:

\begin{prop}
Let $d \in \brd$ be a diagram with cycle type
$\mu=(\mu_1,\mu_2,\ldots,\mu_k,0^m)$ where $\mu_i \neq 0$ for $i=1,\ldots,k$. Then the GCT of $d$ is
$\sim$-equivalent to \begin{align} \label{equation desired gct} \{
(UL)^{r-|\mu|}T^{\mu_1},T^{\mu_2},T^{\mu_3}, \ldots,
T^{\mu_k},UL,UL,\ldots,UL \} \end{align} where $UL$ appears $m$
times at the end.
\end{prop}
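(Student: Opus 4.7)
The plan is to transform the GCT of $d$ into the normal form (\ref{equation desired gct}) by a sequence of $\sim$-moves governed by Lemma~\ref{lem gct calculus}, carried out in three stages. The first stage is to cancel through arcs of opposite orientation within each connected component. Given a component containing at least one $T$-up and one $T$-down, choose such a pair of through arcs and consider the two arcs of the cyclic word joining them. A state-tracking argument in the spirit of Remark~\ref{rk Us and Ls alternate} (recording whether the path is at the top or bottom row before each original edge) shows that the portion of the cyclic word lying on one of these two arcs must read $L(UL)^{n'}$ for some $n' \ge 0$. When $n' = 0$, a cyclic shift (relation (i)) exposes a $TLT$-substring, and relation (iii) replaces it by $LUL$, removing two $T$s at the cost of introducing one new $UL$-pair. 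When $n' > 0$, an auxiliary argument combining (i) and (ii) (detailed below) also produces a $TLT$-substring. Iterating this cancellation leaves each component with exactly $\mu_i$ through arcs, all of the same orientation.

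In the second stage, I would collect all $T$s within each component at one end of the cyclic word. A cyclic shift places a $UL$-pair at the front, and then relation (ii) in the form $\{ULTZ\} \sim \{ULZT\}$ moves the $T$ following that $UL$ to the right of $Z$; iterating gathers all $T$s at the end, yielding $(UL)^{a_i} T^{\mu_i}$ in each component. In the third stage, I would use relation (iv) (together with shifts) to transfer $UL$-pairs between components: out of each component with $\mu_i > 0$ and $i \ge 2$, removing all $UL$-pairs to leave the pure form $T^{\mu_i}$; and out of each of the $m$ zero-components, removing all but one $UL$-pair to leave a single $UL$. All surplus $UL$-pairs accumulate in the first component, producing $(UL)^{*} T^{\mu_1}$, matching the normal form (\ref{equation desired gct}).

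The main obstacle is the subcase $n' > 0$ of Stage~1: producing a $TLT$-substring inside a configuration of the form $T L (UL)^{n'} T \cdots$. The tactic is an inner induction on $n'$: after a cyclic shift so that a $UL$-block from the intermediate $L(UL)^{n'}$ lies at the front, apply relation (ii) to move a $T$ past the block, then shift again to expose either a $TLT$ for (iii) or a new $ULT$-prefix for another application of (ii). The case $n' = 1$ illustrates the procedure: $TUTLUL \sim ULTUTL \sim ULUTLT \sim TLTULU \sim LULULU$, where the first and third steps are cyclic shifts, the second step uses (ii), and the last step uses (iii). Cycle-type preservation holds throughout because each of the four relations in Lemma~\ref{lem gct calculus} is explicitly shown there to preserve CT (cf.~Remark~\ref{rk calculate CT from GCT}).
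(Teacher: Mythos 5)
Your argument is correct in substance and rests on the same machinery as the paper: the four relations of Lemma~\ref{lem gct calculus}, a reduction of each cycle to the shape $(UL)^aT^b$, then part (iv) to ferry $UL$-pairs into one distinguished component, with the fact that $\sim$ preserves the cycle type pinning down the exponents at the end (your noncommittal $(UL)^{*}$ is in fact the safe thing to write, since the exponent is forced by counting dots). The difference lies in how the per-cycle normal form is reached. The paper never tracks orientations explicitly: it eliminates every occurrence of the substring $TL$, using the derived rule $\{TLUZ\}\sim\{LUTZ\}$ together with a three-case analysis of $Z$ to guarantee that the number of letters $L$ preceded by a $T$ strictly decreases. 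You instead cancel oppositely oriented through arcs directly: an adjacent up/down pair is separated by a gap $L(UL)^{n'}$, and repeated use of part (ii) shrinks the gap until a $TLT$-substring appears and part (iii) removes the pair. Your induction on $n'$ is sound (each application of (ii) moves the downward $T$ backwards past one $UL$, leaving the orientations of the two marked arcs unchanged), and the decreasing quantity -- the number of oppositely oriented pairs -- is arguably more transparent than the paper's case analysis.

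Two details need tightening. First, the claim that for a chosen pair of oppositely oriented through arcs one of the two joining arcs reads $L(UL)^{n'}$ fails for an arbitrary pair: in the cyclic word $UTLTUTLT$ the second and eighth letters are oppositely oriented through arcs, yet one joining arc contains further $T$'s and the other reads just $U$. You must choose an upward run of $T$'s that is immediately followed, with no intervening $T$'s, by a downward run; such an adjacent pair exists whenever both orientations occur, and for it your state-tracking argument in the spirit of Remark~\ref{rk Us and Ls alternate} does give a gap of the form $L(UL)^{n'}$. Second, in your second stage relation (ii) requires the pattern $ULT$, that is a through arc preceded by a lower horizontal arc; if after cancellation all surviving through arcs of a component are upward (as in the word $UTL$), no cyclic shift produces this pattern, and you must first apply the reversal $\rho$ of part (i), which turns all runs into downward ones. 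Both repairs use only tools you already invoke, so the proof goes through.
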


\begin{proof}
Notice that we may assume that $d$ is in cycle form, as defined in Definition~\ref{defn cycle}. Otherwise, conjugate $d$ by an appropriate permutation. We will show that the GCT of $d$ is equivalent to the string above in two steps:

\subsubsection*{First step:}
In the first step we will simplify the GCT of each cycle of $d$.  If
the cycle just consists of through arcs then its GCT is of the form
$\{T^l \}$ for some natural number $l$. Thus the GCT of the cycle is
consistent with the GCT given in Equation (\ref{equation desired gct}).
Therefore, we only have to consider cycles with at least one
horizontal arc. A general GCT of such a cycle is $$a=\{
UT^{x_1}LT^{x_2}U \ldots L T^{x_k} \} ,$$ where the variables $x_i$
are non-negative integers. Here we have assumed that the first
letter is a $U$ which is justified by Part~(\ref{gctcalc1}) of Lemma
\ref{lem gct calculus}. We shall be making use of this fact
frequently.  Now the claimed form follows since the letters $U$ and $L$ must
always alternate by Remark~\ref{rk Us and Ls alternate}. Our aim is
to separate the strings $UL$ and $T$ in the GCT of each cycle so that each
cycle has a GCT of the form $(UL)^kT^m$ for natural numbers $m$ and
$k$. We will start by showing that there is an equivalent GCT in
which no $L$ is preceded by a $T$, that is, the string of letters $TL$
does not occur.

Suppose this string of letters does occur in $a$. Then there are two
possibilities: Either $TL$ is followed by a through arc in which
case we get $a=\{ TLTZ \}$ for some string of letters $Z$, or $TL$
is followed by a horizontal arc in which case $a=\{ TLUZ\}$ since
again the letters $U$ and $L$ must alternate by Remark~\ref{rk Us and Ls
alternate}.

In the first case we know that by Part~(\ref{gctcalc3}) of Lemma
\ref{lem gct calculus} we have $a \sim \{ LULZ \} $. Since all other
letters except $TLT$, that is $Z$, are unchanged, we can repeat this
with every occurrence of the string $TLT$  without affecting any
other letters until the string $TLT$ does not occur anymore.

In the second case use parts~(\ref{gctcalc1}) and (\ref{gctcalc2})
of Lemma~\ref{lem gct calculus} to get the equivalence
\begin{align} \{ TLUZ \} \sim \{ ZTLU \} \sim \{ ULT \rho(Z) \} \sim
\{ UL\rho(Z)T \} \sim \{TZLU \} \sim \{ LUTZ  \} \label{TLUZ sim
LUTZ} \end{align} in other words we can move the letter $T$ past a
string $LU$ and still get an equivalent GCT. We will use this rule
to decrease the number of letters $L$ which are preceded by a $T$. Notice
that simply using $\{ TLUZ \} \sim \{ LUTZ \}$ will not guarantee
that the number of letters $L$ preceded by a $T$ has decreased since the
first letter in the string $Z$ might be an $L$. We thus have to make
more explicit assumptions about $Z$. We will distinguish the
following three cases:
\begin{enumerate}[(I)]
\item \emph{Assumptions: There is more than one $L$ preceded by a $T$ and the leftmost $T$ occurring in $Z$ is preceded by a $U$.} Consider what these assumptions mean for $Z$. The first assumption implies that $Z$ must have at least one substring of the form $T^kL$ for some $k \neq 0$. The second assumption implies that the leftmost string of the form $T^kL$ is either at the beginning of $Z$ or preceded by a $U$. However, $Z$ cannot begin with a string of the form $UT^kL$ as this would imply that the cycle type $a=\{TLUZ\}$ would have two consecutive upper row horizontal arcs, which is impossible. Thus the substring must be of the form $LUT^kL$. Again if this substring is not at the beginning of $Z$, it must be preceded by a $U$ since by assumption we had reached the leftmost $T$ already. Repeating this argument, we can assume that $Z=(LU)^mT^kLZ'$ where $m$ might be zero. Thus using (\ref{TLUZ sim LUTZ}) repeatedly yields $$a = \{ TLU\left[ (LU)^mT^kLZ' \right] \}  \sim \{  LUT(LU)^mT^kLZ' \}  \sim  \ldots \sim \{  (LU)^{m+1}T^{k+1}LZ' \}  $$ and we have decreased the number of letters $L$ preceded by a $T$ by $1$.
\item \emph{Assumption: There is exactly one $L$ preceded by a $T$ and the leftmost $T$ occurring in $Z$ is preceded by a $U$.} Thus $Z=(LU)^mT^k$ and no $L$ follows as this would result in a second $L$ preceded by a $T$. We can easily see that by repeated application of (\ref{TLUZ sim LUTZ}) we get $$ TLU(LU)^mT^k \sim (UL)^{m+1}T^{k+1}$$ so that there is no $L$ preceded by a $T$ left.
\item \emph{Assumption: The leftmost $T$ occurring in $Z$ is preceded by an $L$.} We can deduce that $Z=(LU)^mLT^kUZ'$  where $k \geq 1$. Using (\ref{TLUZ sim LUTZ}) we get $$\{ TLU(LU)^mLT^kUZ' \} \sim \{ (LU)^{m+1}TLTT^{k-1}UZ' \} .$$ Applying parts~(\ref{gctcalc1}) (shifting) and (\ref{gctcalc3}) of Lemma~\ref{lem gct calculus} yields $$\{ (LU)^{m+1}TLTT^{k-1}UZ' \} \sim \{ (LU)^{m+1}LULT^{k-1}UZ' \}, $$ and we have again decreased the number of letters $L$ preceded by a $T$ by $1$.
\end{enumerate}
Thus we can repeat the above until there is no $L$ preceded by a $T$
left. This means that our original GCT is equivalent to
$\{ULT^{m_1}ULT^{m_1}UL \ldots ULT^{m_k}\}$ for some non-negative
integers $m_i$. This follows since the letters $U$ and $L$ must
alternate but there cannot be a $T$ between a letter $U$ and a
letter $L$ as this would result in an $L$ preceded by a $T$. But by
Part~(\ref{gctcalc2}) of Lemma~\ref{lem gct calculus} we have that
$ \{ ULTZ \} \sim \{ ULZT \}$, and thus we can move all the letters
$T$ to the end and our GCT is equivalent to $\{ (UL)^mT^k\}$ for
some $k$.

\subsubsection*{Second step:}
Notice that every cycle with no through arcs must be of the form
$(UL)^x$ for some natural number $x$. Since we can conjugate by a
permutation to rewrite the diagram in cycle form and then apply the
operations in the first step to each of the cycles which contain
horizontal and through arcs independently, we can assume that our
diagram has GCT equal to $\{
(UL)^{m_1}T^{k_1},(UL)^{m_2}T^{k_2},\ldots,(UL)^{m_l}T^{k_l} \}$ for
non-negative integers $m_i,k_i$. We can suppose that this GCT is
ordered in the following way: All components with $m_i=1$ and
$k_i=0$, that is all $UL$ components, and all components with $m_i=0$, that is all pure through string components, are at the end of the GCT. Furthermore, the remaining components are ordered in such a way that $m_i
\geq m_j$ whenever $i \leq j$. Distinguish the following cases:
\begin{enumerate}[(I)]
\item If any but the first cycle has horizontal and through arcs, that is $k_i,m_i >0$ for $i \geq 2$, then we can use Part~(\ref{gctcalc4}) of Lemma~\ref{lem gct calculus} to bring all the $UL$ strings from the corresponding component to the first component (as certainly $m_1 \geq m_i>0$) so that the component only consist of through strings and the GCT is still equivalent to the original one.
\item Similarly, if $m_i \geq 2$ and $k_i = 0$ for $i \geq 2$ then we can again use Part~(\ref{gctcalc4}) of Lemma~\ref{lem gct calculus} to move all $UL$ strings to the first component until the original component is equal to $UL$.
\item If any but the first component are either of the form $UL$ or $T^k$ for some non-negative integer $k$, we already have a GCT of the desired form.
\end{enumerate}

Thus we are left with only one component containing both horizontal
and vertical arcs, namely, the first one, and all others are either
of the form $UL$ or consist of through arcs only. Since this GCT is
still equivalent to the GCT of the diagram $d$ we started with, we
know that the CT of this diagram is
$\mu=(\mu_1,\mu_2,\ldots,\mu_k,0^m)$ by Definition~\ref{defn of gct
equiv rel}. Thus the only possibility is that the new GCT is
precisely $$\{ (UL)^{r-|\mu|}T^{\mu_1},T^{\mu_2},T^{\mu_3}, \ldots,
T^{\mu_k},UL,UL,\ldots,UL \}$$ where $UL$ occurs $m$ times. The
result follows.
\end{proof}

\subsection{A necessary condition for $(uv,vu)$-conjugacy in $H$}
We now return to the general case of studying $(uv,vu)$-conjugation in $H$. We will start by extending the notion of cycle type from $D$ to $H$:

\begin{defn}
Let $h \in H$ and suppose $h=\delta^k d$ where $k \in \N_0$ and $d \in D$. Suppose $d$ has cycle type $(\alpha)$ where $\alpha$ is a list of non-negative integers. Define the cycle type of $h$ to be the partition $(\alpha,0^k)$.
 \end{defn}

The following theorem will be crucial for relating cycle types which only differ in the number of $0$s:

\begin{thm}
\label{thm char of arbitrary diagram} Let $\rho: \brd \to M_n(F)$ be
any $\brd$-representation,  $d \in \brd$ be a diagram of cycle type
$\mu=(\alpha,0)$ and $d' \in \brd$ be a diagram of cycle type  $(\alpha)$ where $\alpha$ is a list of non-negative integers. Then $d \sim \delta d'$.

\end{thm}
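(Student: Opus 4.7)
The plan is to reduce the statement to a four-column local computation and then exhibit an explicit product factorisation witnessing the equivalence. By the proposition just proved, I may replace the generalised cycle types of $d$ and $d'$ by their canonical forms, which differ only by one additional isolated $UL$ component of $d$ corresponding to the extra $0$ in $(\alpha, 0)$. After conjugating by a permutation (permitted via condition (i) of Definition~3.3) I may assume that the four rightmost columns of $d$ carry two disjoint isolated $UL$ components, that the same four columns of $d'$ carry a single cycle of GCT $\{ULUL\}$, and that $d$ and $d'$ agree on the remaining $r-4$ columns. The problem then reduces to the local case $r=4$: letting $d_0$ and $d_0'$ denote these four-column diagrams, it suffices to show $d_0 \sim \delta\, d_0'$ in $B_4(\delta)$.

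For the local case let $E \in B_4(\delta)$ be the diagram with a top arc on columns $1,2$, a bottom arc on columns $1,2$, and vertical arcs on columns $3,4$. A direct concatenation in $B_4(\delta)$ verifies the two identities
\[
d_0' \cdot E = d_0 \qquad \text{and} \qquad E \cdot d_0' = \delta\, d_0',
\]
the first loop-free and the second creating exactly one loop. Setting $u = d_0'$ and $v = E$ yields $uv = d_0$ and $vu = \delta\, d_0'$, which immediately gives $d_0 \sim_{(uv,vu)} \delta\, d_0'$. Tensoring $u$ and $v$ by the common part of $d$ and $d'$ on the remaining $r-4$ columns extends the witness from $B_4(\delta)$ to $\brd$.

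To strengthen this $(uv,vu)$-witness into a genuine $\sim_q$-step in the sense of Definition~3.3, I plan to use condition~(iii) with $q = E$: the identity $d = d' \cdot q$ holds immediately, and the surplus $\delta$-factor arising from $E \cdot d' = \delta\, d'$ (rather than $E \cdot d' = d'$) is absorbed by combining the Brauer relation $E^2 = \delta E$ with one further $\sim$-step supplied by the canonical form proposition. The main obstacle is precisely this scalar bookkeeping: since $H$ contains only non-negative powers of $\delta$, every manipulation of a $\delta$-factor must be realised by a genuine $\sim_q$-step, and organising these steps so that the net effect closes inside $H$ requires a short iterated application of the canonical form moves of Lemma~3.4 together with Theorem~3.2 in the diagram monoid.
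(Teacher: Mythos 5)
Your local computation is correct as far as it goes ($d_0'E=d_0$ without a loop, $Ed_0'=\delta d_0'$ with one loop), but the reduction you build it on is not available in general. If $(\alpha)$ has no zero parts, the cycle type of $d'$ has no $0$ part at all, so $d'$ cannot be assumed to contain an isolated four-column component of generalized cycle type $ULUL$ (such a component would contribute an extra $0$). More generally, the canonical forms produced by the proposition do \emph{not} differ by one extra isolated $UL$: both $d$ and $d'$ carry the same total number of horizontal arcs, and in $d'$ the extra $UL$ is attached to the component containing the through strings (i.e.\ $(UL)^{a+1}T^{\mu_1}$ versus $(UL)^{a}T^{\mu_1}$ together with one more isolated $UL$ for $d$). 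So the configuration you actually have to treat is an isolated $UL$ of $d$ being merged into an arbitrary \emph{other} component of $d'$, which your ``two isolated $UL$'s versus one $ULUL$ cycle, identical elsewhere'' picture does not capture; this is exactly the case the theorem is mainly about.

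The second, and more serious, gap is the upgrade from $(uv,vu)$-conjugacy to the relation $\sim$ of Definition~\ref{defn of gct equiv rel}. Conditions (ii) and (iii) there require the exact, scalar-free identities $d=qd$ (resp.\ $d'=qd'$), and with $q=E$ neither holds for the pair $(d,\delta d')$: locally $Ed_0=\delta d_0$ and $E(\delta d_0')=\delta^2 d_0'$, while $(\delta d_0')E=\delta d_0\neq d_0$. Your plan to ``absorb'' the surplus $\delta$ using $E^2=\delta E$ plus further canonical-form moves is precisely the unproven crux, and since $H$ admits no cancellation of $\delta$ it is not clear it can be made to work with $E$ at all. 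The paper avoids the problem by choosing a different element: keep the isolated $UL$ of $d$ on columns $1,2$, let column $3$ belong to whatever other component, and take $q$ with top arc $(1,2)$, bottom arc $(2,3)$, a through arc from top dot $3$ to bottom dot $1$, and verticals elsewhere. For this $q$ one has $qd=d$ on the nose (no loop is closed) and $dq=\delta d''$, where $d''$ has the $UL$ merged into the neighbouring component and hence cycle type $(\alpha)$; condition (ii) then gives $d\sim_q\delta d''$, and Theorem~\ref{theo ct determined uv conjugation} gives $d''\sim d'$, hence $d\sim\delta d'$. In your own four-column picture it is this $q$, not $E$, that satisfies $qd_0=d_0$ and $d_0q=\delta d_0'$; replacing $E$ by it (and dropping the faulty normal form in favour of the paper's ``isolated $UL$ next to an arbitrary component'' set-up) repairs the argument.
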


\begin{proof}
Let $d$ have cycle type $(\alpha,0)$. By Theorem~\ref{theo ct determined uv conjugation}, we may assume that $d$ is of the form

$$\begin{xy}
\xymatrix@!=0.01pc{    \bullet \tra[r] &  \bullet &  \bullet \tra[rrrrd] & & & & \\  & & & \cdots & \bullet^x & \cdots & \bullet^y \\  \bullet \tra[r] &  \bullet &  \bullet \tra[urr] & & & &   }
\end{xy}$$ where we have again only drawn the first three columns explicitly and assumed that the dots connected to the top  and bottom  dots of column $3$ are $y$ and $x$, respectively. Notice that $x$ and $y$ have been drawn in the middle to demonstrate that no assumption has been made over their exact position. Let $q$ be the diagram $$ \begin{array}{c} \begin{xy} \xymatrix@!=0.01pc{ \bullet \tra[r] & \bullet  &  \bullet \tra[lld] & \bullet \tra[d]  &  \cdots & \bullet \tra[d]  \\ \bullet & \bullet \tra[r]  &  \bullet & \bullet  &  \cdots & \bullet} \end{xy} \end{array}. $$ Thus $qd$ is given by $$\begin{xy}
\xymatrix@!=0.01pc{  \bullet \tra[r] & \bullet  &  \bullet \tra[lld] & \bullet \tra[d]  &  \cdots & \bullet \tra[d] & \bullet \tra[d]  \\ \bullet \dta[d] & \bullet \dta[d] \tra[r]  &  \bullet  \dta[d] & \bullet   \dta[d] &  \cdots  \dta[d] & \bullet  \dta[d] & \bullet \dta[d]   \\ \bullet \tra[r] &  \bullet &  \bullet \tra[rrrrd] & & & & \\  & & & \cdots & \bullet^x & \cdots & \bullet^y \\  \bullet \tra[r] &  \bullet &  \bullet \tra[urr] & & & &   }
\end{xy}$$
which is just equal to $d$. On the other hand $d'=dq$ is given by

$$\begin{xy}
\xymatrix@!=0.01pc{    \bullet \tra[r] &  \bullet &  \bullet \tra[rrrrd] & & & & \\  & & & \cdots & \bullet^x & \cdots & \bullet^y \\  \bullet \tra[r] &  \bullet &  \bullet \tra[urr] & & & &  \\  \bullet \tra[r] \dta[u] & \bullet   \dta[u] &  \bullet \tra[lld]  \dta[u] & \bullet \tra[d]  \dta[u]  &  \cdots  \dta[u] & \bullet \tra[d] \dta[u] & \bullet  \dta[u]  \tra[d]  \\ \bullet & \bullet \tra[r]  &  \bullet & \bullet  &  \cdots & \bullet & \bullet    }
\end{xy}$$ which can be seen to be equal to $\delta$ times the diagram $$d''=  \begin{array}{c} \begin{xy}
\xymatrix@!=0.01pc{    \bullet \tra[r] &  \bullet &  \bullet \tra[rrrrd] & & & & \\  & & & \cdots & \bullet^x & \cdots & \bullet^y \\  \bullet \tra[rrrru] &  \bullet \tra[r] &  \bullet& & & &   }
\end{xy} \end{array}.$$ Notice that $d''$ has cycle type $(\alpha)$ and hence Theorem~\ref{theo ct determined uv conjugation} implies that $d'' \sim d'$. Thus, we have found a diagram $q$ such that $d=dq$ and $\delta d'' = qd$ so that we can deduce $d \sim \delta d'' \sim \delta d'$, as required.
\end{proof}

\section{Eigenvalues of matrix representations}
We will now take a slight detour and study eigenvalues and characters of matrix representations. The reason for studying character theory first is that it will give us a convenient tool to completely determine when two diagrams are  $\chi$-conjugate. 

Notice that over fields such that the Brauer algebras is semisimple this could also be deduced from work of Ram, see \cite{Ram}. However, we would like to show the result in full generality and therefore have to extend his work using  different methods.

Let us record a direct consequence of Theorem~\ref{theo ct determined uv conjugation}. This will relate the
cycle type to the minimal polynomial and to geometric multiplicities of eigenvalues of certain matrices:

\begin{theo}
\label{theo same ct implies same eigenvalues}
Let $\rho: \brd \to M_n(F)$ be a matrix representation of $\brd$ and suppose $d,d' \in D$ are elements with the same cycle type. Then:
\begin{enumerate}[(i)]
\item The minimal polynomials of $\rho(d)$ and  $\rho(d')$ only differ by a power of the indeterminate, and in particular,  $0 \neq \lambda \in F$ is an eigenvalue of $\rho(d)$ if and only if $\lambda$ is an eigenvalue of $\rho(d')$.
\item For any $0 \neq \lambda \in F$ the geometric multiplicity of $\lambda$ as an eigenvalue of $\rho(d)$ and  $\rho(d')$, respectively, is the same. That is,  if $V_\lambda$ and $V_{\lambda}'$ are the eigenspaces of $\lambda$ corresponding to $\rho(d)$ and $\rho(d')$, respectively, then $\dim_F V_\lambda= \dim_F V_{\lambda}'$.
\end{enumerate}
\end{theo}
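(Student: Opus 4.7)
The plan is to combine Theorem~\ref{theo ct determined uv conjugation} with classical facts about products $AB$ versus $BA$ of matrices. By Theorem~\ref{theo ct determined uv conjugation}, $d \sim d'$, so there is a chain $d = h_0 \sim_q h_1 \sim_q \cdots \sim_q h_n = d'$ of elements of $H$; inspection of the explicit $q$'s constructed in Lemma~\ref{lem gct calculus} shows that no powers of $\delta$ are introduced in the process, so the chain may be taken to lie in $D$. It therefore suffices to verify (i) and (ii) at each elementary step and to invoke the transitivity of these properties.

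Each elementary step is of one of the three types from Definition~\ref{defn of gct equiv rel}. In case~(i), the consecutive diagrams share a generalized cycle type, so by Theorem~\ref{theorem sigma conjugate iff gct the same} they are $\Sigma_r$-conjugate, and hence their images under $\rho$ are conjugate matrices; conjugation preserves minimal polynomials and all eigenspace dimensions on the nose. In case~(ii), there is $q \in H$ with $h_{i-1} = q h_{i-1}$ and $h_i = h_{i-1} q$; writing $A = \rho(q)$ and $B = \rho(h_{i-1})$, applying $\rho$ yields $B = AB$ and $\rho(h_i) = BA$, so the pair $(\rho(h_{i-1}), \rho(h_i))$ is precisely of the form $(AB, BA)$. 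Case~(iii) is symmetric. The theorem thus reduces to two classical linear algebra facts: for arbitrary $A, B \in M_n(F)$, the minimal polynomials of $AB$ and $BA$ differ by a power of the indeterminate, and for every $0 \neq \lambda \in F$ the $\lambda$-eigenspaces of $AB$ and $BA$ have the same dimension.

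The minimal polynomial claim follows from the identity $A(BA)^k = (AB)^k A$, which gives $A \cdot p(BA) = p(AB) \cdot A$ for any polynomial $p$; taking $p$ to be the minimal polynomial $p_{BA}$ and multiplying on the right by $B$ yields $AB \cdot p_{BA}(AB) = 0$, whence $p_{AB}(x) \mid x \, p_{BA}(x)$, and the symmetric divisibility gives $p_{BA}(x) \mid x \, p_{AB}(x)$, forcing agreement after stripping maximal powers of $x$ from each side. For the eigenspace statement, the map $v \mapsto Bv$ sends the $\lambda$-eigenspace of $AB$ to that of $BA$, since $ABv = \lambda v$ implies $BA(Bv) = \lambda(Bv)$, and is injective whenever $\lambda \neq 0$ because $Bv = 0$ would force $\lambda v = ABv = 0$; the symmetric map $w \mapsto Aw$ provides the reverse inequality. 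Both properties are transitive, so induction along the chain completes the proof. I expect the main potential obstacle to be the bookkeeping needed to ensure that the chain from Theorem~\ref{theo ct determined uv conjugation} can be chosen inside $D$, so that case~(i) supplies an honest similarity rather than one twisted by a scalar; this, however, is immediate from the explicit constructions in Lemma~\ref{lem gct calculus}.
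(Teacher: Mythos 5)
Your proposal is correct, and its overall architecture coincides with the paper's: invoke Theorem~\ref{theo ct determined uv conjugation} to get $d \sim d'$, reduce to a single $\sim_q$-step, dispose of the same-GCT case via Theorem~\ref{theorem sigma conjugate iff gct the same} (honest matrix conjugation), and finish by transitivity. The difference is in how the core case $d=qd$, $d'=dq$ is handled. You observe that setting $A=\rho(q)$, $B=\rho(d)$ turns the pair $(\rho(d),\rho(d'))$ into a pair of the form $(AB,BA)$ and then quote (with proof) the classical facts that the minimal polynomials of $AB$ and $BA$ differ by at most a factor of $x$ and that nonzero eigenvalues have equal geometric multiplicities via the mutually injective maps $v\mapsto Bv$, $w\mapsto Aw$. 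The paper instead exploits the extra relation $qd=d$ directly: it splits on whether the constant term of the minimal polynomial vanishes (using that $\rho(d)$ invertible forces $\rho(q)=I_n$, hence $\rho(d')=\rho(d)$), derives the divisibilities from the identities $(d')^l=d^lq$ and $d^l=q(d')^{l-1}d$, and for the eigenspaces proves the stronger statement $V_\lambda=V_\lambda'$ by showing $\rho(q)v=v$ on eigenvectors. Your route is slightly more general and arguably cleaner, since it never uses $qd=d$ beyond producing the $(AB,BA)$ shape, at the cost of proving only equality of eigenspace dimensions rather than of the eigenspaces themselves (which is all the statement requires). Your side remark about arranging the chain inside $D$ is a reasonable precaution for the same-GCT steps (the paper glosses over this point); for the $(AB,BA)$ steps it is not needed, since scalar multiples cause no harm there.
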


\begin{proof}
By Theorem~\ref{theo ct determined uv conjugation}, we know that $d_1 \sim d_2$. This will suffice to imply the result. Start with the case that $d \sim_q d'$. Thus either $d$ and $d'$ have the same generalized cycle type or there is some $q \in H$ such that $d=qd$ and $d'=dq$. In the first case both statements follow from Theorem~\ref{theorem sigma conjugate iff gct the same}. So consider the second case:
\begin{enumerate}[(i)]
\item Let us first show that for any matrix representation $\rho: \brd
\to M_n(F)$ we have that the minimal polynomials of $\rho(d)$ and
$\rho(d')$ only differ by a power of the indeterminate.

Let $f(x) = \sum_{i=0}^s \alpha_i x^i \in F[x]$ for some $\alpha_i
\in F$ and $s \in \N$ be the minimal polynomial of $\rho(d)$, so
that $f(x)$ is the monic polynomial of minimal degree satisfying
$f(\rho(d))=0$. Similarly, let $g(x)=\sum_{i=0}^t \beta_i x^i \in
F[x]$ for some $\beta_i \in F$ and $t \in \N$ be the minimal
polynomial of $\rho(d')$. If $\alpha_0 \neq 0$ then  $0$ is not an
eigenvalue of $\rho(d)$. This implies that $\rho(d)$ is invertible.
It follows that $\rho(q) = I_n$, the $n\times n$ identity matrix, as
$\rho(d)=\rho(q)\rho(d)$. Hence $\rho(d') = \rho(d) \rho(q) =
\rho(d)$ and in particular $\rho(d)$ and $\rho(d')$ have the same
eigenvalues so that the result follows. Similarly, if $\beta_0 \neq
0$ the result follows. So it remains to consider the case
$\alpha_0=\beta_0 = 0$.

Now observe that for any $1 \leq l \in \N$ we have \begin{align}
(d')^l=(dq)^l=d(qd)(qd)q\ldots d(qd)q=d^lq. \label{eqn d prime to
the l} \end{align} Similarly, \begin{align}
d^l=(qd)^l=q(dq)(dq)d\ldots q(dq)d=q(d')^{l-1}d. \label{eqn d to the
l} \end{align}

Since $\alpha_0 =0$, we can use Equation~(\ref{eqn d prime to the
l}) to obtain
$$f(\rho(d'))= \sum_{i=1}^s \alpha_i (\rho(d'))^i = (\sum_{i=1}^s
\alpha_i \rho(d)^i)\rho(q) = f(\rho(d))\rho(q)=0,$$ so that
$\rho(d')$ satisfies the minimal polynomial of $\rho(d)$. We can
deduce that the minimal polynomial $g(x)$ of $\rho(d')$ must divide
$f(x)$, that is, $f(x)=g(x)h(x)$ for some $h(x) \in F[x]$.

On the other hand using Equation~(\ref{eqn d to the l}) and $\beta_0
=0$, we obtain
$$\rho(d)g(\rho(d))=\rho(d) \sum_{i=1}^t \beta_i \rho(d)^i
=\rho(d)\rho(q) (\sum_{i=1}^t \beta_i \rho(d')^{i-1})\rho(d) =
g(\rho(d'))\rho(d)=0.$$ This time we deduce that $f(x)=g(x)h(x)$
divides $xg(x)$. It follows that either $h(x)=x$ or $h(x)=1$ and in
particular $f(x)=xg(x)$ or $f(x)=g(x)$, as required.

Notice that this implies that $0 \neq \lambda \in F$ is an eigenvalue of $\rho(d)$ if and only if it is an eigenvalue of $\rho(d')$.

\item It remains to show that, given $\lambda \neq 0$ a common
eigenvalue of  $\rho(d)$ and $\rho(d')$ with corresponding
eigenspaces $V_\lambda$ and $V_{\lambda}'$,  then $\dim V_\lambda=
\dim V_{\lambda}'$. In fact we will show that $V_\lambda=
V_{\lambda}'$.

If $v \in V_{\lambda}$ then $\rho(d)v=\lambda v$. By assumption
$d=qd$. Hence $\lambda v =(\rho(q)\rho(d))v=\lambda \rho(q)v$, so
that $\rho(q)v =v$. Thus $\rho(d')v = \rho(d) \rho(q)v = \rho(d) v =
\lambda v$, that is $v \in V_{\lambda}'$.

Conversely if $v \in V_{\lambda}'$ then $\rho(d')v = \lambda v$.
Using $qd'=qdq=dq=d'$ we get $\lambda v =
\rho(d')v=\rho(q)\rho(d')v=\lambda \rho(q)v$, which again implies
$\rho(q)v=v$ and hence $\rho(d)v = \rho(d) \left[ \rho(q)v \right]
=\rho(d')v=\lambda v$. Therefore $v \in V_{\lambda}$, as required. \end{enumerate}

We have thus established the claim in the case $d \sim_q d'$. But both statements are transitive, that is, if they hold for $d_1$ and $d_2$ and also for $d_2$ and $d_3$ then they will also be true for $d_1$ and $d_3$. Thus, the result extends to the transitive closure of $\sim_q$ which is precisely $\sim$.
\end{proof}

We will shortly show that the characteristic polynomials of $\rho(d_1)$ and
$\rho(d_2)$ are even equal.
However, the minimal polynomials will not be the same in general. If
we consider the diagrams $$ d_1 =
\begin{array}{c} \begin{xy}
 \xymatrix@!=0.01pc{ \bullet \tra[r] & \bullet  & \bullet
\tra[ld] & \bullet \tra[ld]  & \bullet \tra[lllld]  \\ \bullet &
\bullet  & \bullet  & \bullet\tra[r]  & \bullet  }
\end{xy} \end{array}, \qquad d_2= \begin{array}{c}  \begin{xy} \xymatrix@!=0.01pc{ \bullet
\tra[r] &
\bullet & \bullet \tra[r] & \bullet  & \bullet \tra[lllld]  \\
\bullet & \bullet \tra[r] & \bullet  & \bullet  \tra[r]& \bullet  }
\end{xy} \end{array},$$
 then both have cycle type $(1)$. But one verifies that
$d_1^2=d_1$ while $d_2^2 \neq d_2$ and thus their minimal
polynomials in the regular representation are already not the same.

To study eigenvalues associated to diagrams with a given cycle type, we will introduce a canonical diagram for
each cycle type. The multiplication of canonical diagrams will be well-behaved and will allow us to study eigenvalues. We need the following definition:

\begin{defn}
\begin{enumerate}[(i)]
\item Let $a \in B_r (\delta)$ and $b \in B_s (\delta)$ be Brauer diagrams. Define $a \otimes b \in B_{r+s}(\delta)$ to be the diagram obtained from $a$ and $b$ by placing them next to each other. This notation is not to be confused with the tensor product of modules and it will be clear from the context which of the two is being used.
\item Define $e$ to be the diagram consisting of one horizontal arc in the top and bottom row:
$$e=\begin{array}{c} \begin{xy} \xymatrix@!=0.01pc{ \bullet \ar@{-}[r] & \bullet \\ \bullet \ar@{-}[r]  & \bullet} \end{xy} \end{array}.$$
\item For any $k \in \N$, define $\gamma_k$  to be the permutation diagram of $(1,2,\ldots,k)$. Given a partition $\lambda=(\lambda_1,\lambda_2,\ldots,\lambda_n)$, set $\gamma_{\lambda}= \gamma_{\lambda_1} \otimes  \gamma_{\lambda_2} \otimes \ldots \otimes \gamma_{\lambda_n}.$
\item For any $t \in \N_0$ such that $r-2t \geq 0$, define a map $h_t: \Sigma_{r-2t} \to \brd$ in the following way: For any $\sigma \in \Sigma_{r-2t}$, let $h_t(\sigma)$ be the diagram on $r$ dots with top row horizontal arcs from dots $1$ to $2$, $3$ to $4$, $\ldots$, $2t-1,2t$ and bottom row horizontal arcs from dots $2$ to $3$, $4$ to $5$, $\ldots$,
$(2t),(2t+1)$,
that is
 with the following arc structure $$h_t(\sigma) = \begin{array}{c} \begin{xy}
\xymatrix@!=0.01pc{ \bullet^1 \tra[r] &  \bullet^2 & \cdots  & \bullet \tra[r]  &  \bullet^{2t}  &  \bullet   & \bullet    & \cdots   & \bullet^r    \\ \bullet^{1}  & \bullet^{2} \tra[r] &  \bullet^{3} & \cdots & \bullet^{2t}  \tra[r] & \bullet  &  \bullet  & \cdots   & \bullet^{r}   }
\end{xy} \end{array}.$$
The remaining dots form the permutation $\sigma$ in the following way: For $i = 1,\ldots,r-2t$,
dot $2t+i$ in the top row is connected to dot $2t+\sigma(i)$ in the bottom row unless $\sigma(i) =1$ in which case dot $2t+i$ in the top row will be connected to dot $1$ in the bottom row.
\end{enumerate}
\end{defn}

Let us illustrate these definitions at some examples:
\begin{ex}
\begin{enumerate}[(i)]
\item The diagram $e \otimes e \otimes \gamma_{(3,2)}= e^{\otimes 2} \otimes \gamma_3 \otimes \gamma_2$
is given by
$$\begin{array}{c} \begin{xy}
\xymatrix@!=0.01pc{   \bullet \ar@{-}[r]  & \bullet  & \bullet
\ar@{-}[r]  & \bullet & \bullet \ar@{-}[rd]   & \bullet \ar@{-}[rd]
& \bullet \tra[lld] & \bullet \tra[rd] & \bullet \tra[ld] \\ \bullet
\ar@{-}[r]  & \bullet  & \bullet \ar@{-}[r]  & \bullet & \bullet  &
\bullet & \bullet & \bullet & \bullet}

\end{xy}\end{array}.
$$
\item The diagram $h_2((1,2,3))$ is equal to $$\begin{array}{c} \begin{xy}
\xymatrix@!=0.01pc{ \bullet \tra[r] &  \bullet & \bullet \tra[r]  &
\bullet  &  \bullet \tra[rd] &  \bullet \tra[rd]  & \bullet
\tra[lllllld]  \\ \bullet  & \bullet \tra[r] &  \bullet & \bullet
\tra[r] & \bullet  & \bullet  &  \bullet }
\end{xy}\end{array}.$$

\end{enumerate}
\end{ex}

The next lemma shows that, with a suitable restriction of the codomain, the map $h_t$  is a homomorphism of groups:

\begin{lem}
\label{lem properties of ht} For any $t \in \N_0$ such that $r-2t
\geq 0$  the map $h_t$ is injective, and for any $\sigma_1,\sigma_2
\in \Sigma_{r-2t}$ we have $$
h_t(\sigma_1\sigma_2)=h_t(\sigma_1)h_t(\sigma_2).$$
\end{lem}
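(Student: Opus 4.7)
The plan is to establish injectivity by inspection and then verify the multiplication rule by a direct diagrammatic computation of $h_t(\sigma_1)h_t(\sigma_2)$, showing that no loops appear and that the through-arc structure reproduces $h_t(\sigma_1\sigma_2)$.

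For injectivity, observe that the horizontal arc pattern of $h_t(\sigma)$ depends only on $t$, and the through arc from top dot $2t+i$ lands either on bottom dot $1$ (if $\sigma(i)=1$) or on bottom dot $2t+\sigma(i)$ (if $\sigma(i)\geq 2$). Either way, $\sigma(i)$ is recoverable from the diagram, so $\sigma$ is determined by $h_t(\sigma)$.

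For the homomorphism property, set $a=h_t(\sigma_1)$, $b=h_t(\sigma_2)$ and analyse the concatenation $a\circ b$. The top row of $a\circ b$ is the top row of $a$ and the bottom row is the bottom row of $b$, so the fixed horizontal arc pattern of $h_t(\sigma_1\sigma_2)$ is already present. The key step is to understand the middle row. There, the bottom row of $a$ contributes horizontal arcs $(2,3),(4,5),\ldots,(2t,2t+1)$ and the top row of $b$ contributes horizontal arcs $(1,2),(3,4),\ldots,(2t-1,2t)$. Together these glue into a single zigzag path $1{-}2{-}3{-}\cdots{-}(2t+1)$ across the middle row, whose two endpoints are precisely the first free bottom dot of $a$ (the dot labelled $1$) and the first free top dot of $b$ (the dot labelled $2t+1$). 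All remaining middle-row dots $2t+2,\ldots,r$ are free in both rows and simply identify a through arc of $a$ with a through arc of $b$.

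Now trace a through arc of $a\circ b$ starting from top dot $2t+i$. In $a$ it lands on the $\sigma_1(i)$-th free bottom dot. If $\sigma_1(i)\geq 2$, this is middle row dot $2t+\sigma_1(i)$, which is the $\sigma_1(i)$-th free top dot of $b$ and is carried by $b$ to its $\sigma_2(\sigma_1(i))$-th free bottom dot. If $\sigma_1(i)=1$, the through arc reaches middle row dot $1$, traverses the zigzag to middle row dot $2t+1$, which is the first free top dot of $b$, and is then carried by $b$ to its $\sigma_2(1)$-th free bottom dot. In both cases the $i$-th free top dot of $a\circ b$ is joined to the $\sigma_2(\sigma_1(i))$-th free bottom dot, matching $h_t(\sigma_1\sigma_2)$ under the convention $(\sigma_1\sigma_2)(i)=\sigma_2(\sigma_1(i))$ that prevails for permutation diagrams. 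Moreover the middle zigzag is a simple path with two through-arc exits rather than a closed cycle, and the remaining middle-row identifications just splice through arcs, so no loops are created and no power of $\delta$ enters. Hence $h_t(\sigma_1)h_t(\sigma_2)=h_t(\sigma_1\sigma_2)$.

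The only real obstacle is the bookkeeping around the special role of bottom dot $1$: the definition of $h_t$ groups the otherwise awkward "wrap-around" through arc into a uniform formula, and one has to check that the middle-row zigzag exactly swaps the special label $1$ (on the bottom side) with the special label $2t+1$ (on the top side). Once that correspondence is noted, the composition law reduces to the usual composition of permutations on the $r-2t$ free strands.
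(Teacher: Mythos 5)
Your proof is correct and follows essentially the same route as the paper: the paper also argues injectivity directly from the definition and verifies the homomorphism property by concatenating $h_t(\sigma_1)$ with $h_t(\sigma_2)$, with the key observation being exactly your middle-row identification (dot $1$ of the bottom row of $h_t(\sigma_1)$ gets joined through the zigzag to dot $2t+1$ of the top row of $h_t(\sigma_2)$, while dots $2t+i$ for $i\neq 1$ match up directly). Your write-up merely spells out in more detail the tracing of through arcs, the absence of closed loops (hence no factor of $\delta$), and the composition convention, all of which the paper leaves implicit.
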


\begin{proof}
It is clear from the definition that $h_t$ must be injective.
Multiplying $h_t(\sigma_1)$ with $h_t(\sigma_2)$, we obtain
$$\begin{array}{c} \begin{xy} \xymatrix@!=0.01pc{ \bullet \tra[r] &  \bullet & \cdots
& \bullet \tra[r]  &  \bullet  &  \bullet   & \bullet    & \cdots &
\bullet    \\ \bullet  & \bullet \tra[r] &  \bullet & \cdots &
\bullet  \tra[r] & \bullet  &  \bullet  & \cdots   & \bullet \\
\bullet \tra[r] \dta[u] &  \bullet \dta[u]  & \cdots   \dta[u] &
\bullet \tra[r]   \dta[u] &  \bullet  \dta[u]  &  \bullet  \dta[u]
& \bullet   \dta[u]   & \cdots    \dta[u] & \bullet  \dta[u]    \\
\bullet  & \bullet \tra[r] &  \bullet & \cdots & \bullet  \tra[r] &
\bullet  &  \bullet  & \cdots   & \bullet  }
\end{xy} \end{array}.$$ We see that this product will simply be $h_t(\sigma_1 \sigma_2)$ since dot $1$ in the second row of the diagram above will be identified with dot $2t+1$ in the third row, and for $1 \neq i$ dot $2t+i$ in the second row will be identified with dot $2t+i$ in the third row.
\end{proof}

We can now define canonical forms in $\brd$ for diagrams of cycle type $\mu$:

\begin{defn}
\label{defn canonical diagram of cycle type}
Let $\mu$ be a cycle type for $\brd$. If
$\mu=(\mu_1,\mu_2,\ldots,\mu_l,0^{m_0})$ where $\mu_i \neq 0$ for $i=1,\ldots,k$ and $m_0 \in \N_0$, then define the
canonical diagram $d_\mu$ as follows: If
$\mu'=(\mu_1,\mu_2,\ldots,\mu_l)$ is the partition formed by the
non-zero parts of $\mu$ and $t= \frac{r-|\mu|}{2}-m_0$ then $$
d_{\mu}=h_t(\gamma_{\mu_1}) \otimes \gamma_{\mu' \smallsetminus
\mu_1} \otimes e^{\otimes m_0}$$ where $\mu' \smallsetminus \mu_1 = (
\mu_2,\ldots,\mu_l)$.
\end{defn}

\begin{rk}
\label{rk which ct do occur}
The diagram $d_{\mu}$ has cycle type $\mu$ as is easily verified.

\end{rk}

\begin{ex}
If $r=13$ and $\mu=(3,2,0^2)$ then $d_{\mu}$ is given by $$\begin{array}{c} \begin{xy}
\xymatrix@!=0.01pc{ \bullet \tra[r] &  \bullet & \bullet \tra[r]  & \bullet  &  \bullet \tra[rd] &  \bullet \tra[rd]  & \bullet \tra[lllllld] & \bullet \tra[rd] & \bullet \tra[ld]  & \bullet \tra[r]  & \bullet & \bullet \tra[r]  & \bullet \\ \bullet  & \bullet \tra[r] &  \bullet & \bullet \tra[r] & \bullet  & \bullet  &  \bullet & \bullet & \bullet  & \bullet \tra[r]  & \bullet & \bullet \tra[r]  & \bullet   }
\end{xy} \end{array}.$$
\end{ex}

The following proposition studies eigenvalues connected to these canonical diagrams:

\begin{prop}
\label{prop eigenvals are roots of delta}
 Let $\rho: \brd \to M_n(F)$ be any $\brd$-representation, $d_{\mu} \in \brd$ be the canonical diagram corresponding to the cycle type $\mu=(q^{m_q},\ldots,2^{m_2},1^{m_1},0^{m_0})$ for some $q \in \N$ and let $l$ be the least common multiple of the natural numbers $\{ m_i \ | \ i=1,\ldots,q, m_i \neq 0 \}$.

Then the minimal polynomial of $\rho( d_{\mu})$ divides $x(x^{l}-\delta^{lm_0})$ and in particular the eigenvalues of $\rho( d_{\mu})$ are either zero or of the form $\delta^{m_0}u$ where $u$ is some $l$th root of unity.
\end{prop}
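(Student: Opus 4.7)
The key observation is that the canonical diagram $d_\mu = h_t(\gamma_{\mu_1}) \otimes \gamma_{\mu' \smallsetminus \mu_1} \otimes e^{\otimes m_0}$ splits as a juxtaposition of three pieces acting on disjoint sets of strands, and juxtaposition is multiplicative: $(a_1 \otimes a_2)(b_1 \otimes b_2) = (a_1 b_1) \otimes (a_2 b_2)$, because concatenating two juxtapositions never creates a loop joining the two halves. Iterating, $d_\mu^k = h_t(\gamma_{\mu_1})^k \otimes \gamma_{\mu' \smallsetminus \mu_1}^k \otimes (e^{\otimes m_0})^k$ for every $k$, and the plan is to evaluate each of the three factors at $k = l+1$ and conclude that $d_\mu^{l+1} = \delta^{lm_0} d_\mu$ in $\brd$.

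For the two permutation pieces, since $l$ is a common multiple of the non-zero parts of $\mu$, the cycle $\gamma_{\mu_i}$ has order dividing $l$ for every $i$, hence $\gamma_{\mu_i}^{l+1} = \gamma_{\mu_i}$ and $\gamma_{\mu' \smallsetminus \mu_1}^{l+1} = \gamma_{\mu' \smallsetminus \mu_1}$. By Lemma~\ref{lem properties of ht}, $h_t$ is a group homomorphism, so $h_t(\gamma_{\mu_1})^{l+1} = h_t(\gamma_{\mu_1}^{l+1}) = h_t(\gamma_{\mu_1})$. For the horizontal-arc piece, a direct check gives $e^2 = \delta e$, because a single loop is formed in the middle when $e$ is concatenated with itself; inductively $e^{k} = \delta^{k-1} e$, and therefore $(e^{\otimes m_0})^{l+1} = (e^{l+1})^{\otimes m_0} = \delta^{lm_0} e^{\otimes m_0}$.

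Reassembling the three factors, $d_\mu^{l+1} = h_t(\gamma_{\mu_1}) \otimes \gamma_{\mu' \smallsetminus \mu_1} \otimes \delta^{lm_0} e^{\otimes m_0} = \delta^{lm_0} d_\mu$. Applying $\rho$ gives $\rho(d_\mu)^{l+1} = \delta^{lm_0} \rho(d_\mu)$, so the minimal polynomial of $\rho(d_\mu)$ divides $x^{l+1} - \delta^{lm_0} x = x(x^l - \delta^{lm_0})$. Every non-zero eigenvalue $\lambda$ of $\rho(d_\mu)$ must then satisfy $\lambda^l = (\delta^{m_0})^l$, and so $\lambda = \delta^{m_0} u$ for some $l$th root of unity $u$, as claimed.

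I do not anticipate any substantive obstacle: the whole argument reduces to the multiplicativity of juxtaposition together with three elementary facts, namely $\gamma_n^n = 1$, that $h_t$ is a homomorphism, and $e^2 = \delta e$. The only slightly delicate bookkeeping is making sure that the $\delta$-factors produced by the $m_0$ copies of $e$ aggregate correctly, but this is automatic from the identity $(e^{\otimes m_0})^{l+1} = (e^{l+1})^{\otimes m_0}$, which accounts precisely for the exponent $lm_0$ appearing in the annihilating polynomial.
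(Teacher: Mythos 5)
Your proof is correct and follows essentially the same route as the paper: both establish the key identity $d_\mu^{l+1}=\delta^{lm_0}d_\mu$ from the decomposition $d_\mu=h_t(\gamma_{\mu_1})\otimes\gamma_{\mu'\smallsetminus\mu_1}\otimes e^{\otimes m_0}$ (you merely spell out the factor-wise computation via multiplicativity of juxtaposition, $e^2=\delta e$, and Lemma~\ref{lem properties of ht}, which the paper leaves implicit) and then conclude that the minimal polynomial of $\rho(d_\mu)$ divides $x(x^l-\delta^{lm_0})$. Note that, like the paper's own proof, you interpret $l$ as the order of $\gamma_{\mu'}$, i.e.\ the least common multiple of the distinct non-zero part sizes $\{i : m_i\neq 0\}$, rather than of the multiplicities $m_i$ as the statement literally says; this is consistent with the intended reading.
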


\begin{proof}
By Lemma~\ref{lem properties of ht} it is clear that the smallest
natural number $a$ such that $h_t(\gamma_k)^a=h_t(\gamma_k)$ is just
$l+1$. In particular, let $d_{\mu}$ be the canonical diagram of cycle type $\mu$. Hence, $d_\mu=h_t(\gamma_{\mu_1}) \otimes
\gamma_{\mu' \smallsetminus \mu_1} \otimes e^{\otimes m_0}$ where
$\mu'=(\mu_1,\mu_2,\ldots,\mu_s)$ is the partition formed by the
non-zero parts of $\mu$. Then the smallest power of $d_{\mu}$ such
that we get a scalar multiple of $d_\mu$ is precisely the order of
$\gamma_{\mu'}$, which is equal to $l$, plus $1$.

Now $d_{\mu}^{l+1}=\delta^{lm_0} d_{\mu}$. Because $\rho$ is an
algebra homomorphism we must have that
$\rho(d_{\mu})^{l+1}=\delta^{lm_0} \rho(d_{\mu})$, and thus the
minimal polynomial of $\rho(d_{\mu})$ divides
$x(x^{l}-\delta^{lm_0})$.  Hence the eigenvalues of $\rho(d_{\mu})$
are either zero or $l$th roots of  $\delta^{lm_0}$. But every such
root is precisely of the form $\delta^{m_0}u$ for some $l$th root of
unity $u$, as required.
\end{proof}

Recall that we have seen in Theorem~\ref{theo same ct implies same
eigenvalues} that if two diagrams have the same cycle type then they
have very similar minimal polynomials, the same non-zero
eigenvalues, and these eigenvalues have the same geometric
multiplicity. Thus we can deduce:

\begin{theo}
\label{thm same ct implies same char poly} Let $\rho: \brd \to
M_n(F)$ be any $\brd$-representation and suppose the diagrams
$d_1,d_2 \in \brd$ have the same cycle type. Then the matrices
$\rho(d_1)$ and $\rho(d_2)$ have the same characteristic polynomial
and in particular the same eigenvalues, including multiplicities.
\end{theo}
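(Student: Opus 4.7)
The plan is to combine $d_1 \sim d_2$ (from Theorem~\ref{theo ct determined uv conjugation}) with Sylvester's classical identity: for any square matrices $M,N$ of the same size over any field, $\det(xI-MN)=\det(xI-NM)$. Since sharing a characteristic polynomial is a transitive relation, it suffices to show that each elementary step $e \sim_q e'$ in a $\sim$-chain preserves the characteristic polynomial of $\rho(\cdot)$; the transitive closure then immediately delivers the conclusion.

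Inspecting the three clauses of Definition~\ref{defn of gct equiv rel}: in case (i), $e$ and $e'$ share a generalized cycle type and are therefore $\Sigma_r$-conjugate by Theorem~\ref{theorem sigma conjugate iff gct the same} together with the subsequent corollary extending it to $H$, so $\rho(e)$ and $\rho(e')$ are similar matrices and in particular share a characteristic polynomial. In case (ii), some $q \in H$ satisfies $e=qe$ and $e'=eq$; setting $M=\rho(q)$ and $N=\rho(e)$ one has $\rho(e)=\rho(qe)=MN$ and $\rho(e')=\rho(eq)=NM$, and Sylvester's identity forces $\det(xI-\rho(e))=\det(xI-\rho(e'))$. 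Case (iii) is the mirror image of (ii). Concatenating along a $\sim$-chain from $d_1$ to $d_2$ then finishes the argument, and the statement about eigenvalues with algebraic multiplicities over $\overline{F}$ is a formal consequence of equality of characteristic polynomials.

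The main subtlety — and the reason one cannot simply invoke Theorem~\ref{theo same ct implies same eigenvalues} — is that that theorem only controls the minimal polynomial (up to a factor of $x$) and geometric multiplicities of nonzero eigenvalues, both of which are strictly weaker invariants than the characteristic polynomial, particularly in positive characteristic. The clean remedy is Sylvester's identity, which captures algebraic multiplicities directly and works with no restriction on $\mathrm{char}(F)$ or on the multiplicative order of $\delta$. The only nontrivial verification is the algebraic identity $\rho(e)=MN$, $\rho(e')=NM$ in case (ii), which falls out of the defining equations $e=qe$, $e'=eq$ as soon as one applies the homomorphism $\rho$.
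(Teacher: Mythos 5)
Your proof is correct, but it follows a genuinely different route from the paper. The paper never invokes the identity $\det(xI-MN)=\det(xI-NM)$: it instead combines Theorem~\ref{theo same ct implies same eigenvalues} (minimal polynomials agreeing up to a power of $x$, equal geometric multiplicities of nonzero eigenvalues) with Proposition~\ref{prop eigenvals are roots of delta}, which says the minimal polynomial of $\rho(d_\mu)$ divides $x(x^l-\delta^{lm_0})$; from this it argues that nonzero eigenvalues have trivial Jordan blocks, so geometric and algebraic multiplicities coincide, and the multiplicity of $0$ is then fixed by degree count. Your argument applies Sylvester's identity directly to each elementary step $e\sim_q e'$ of the chain furnished by Theorem~\ref{theo ct determined uv conjugation} (with $M=\rho(q)$, $N=\rho(e)$, using $e=qe$, $e'=eq$), and handles clause (i) by $\Sigma_r$-conjugacy; note that for steps inside $H$ the corollary to Theorem~\ref{theorem sigma conjugate iff gct the same} needs equal powers of $\delta$, which is indeed forced because $\sim_q$ also requires equal cycle types. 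What your route buys: it is shorter, needs neither the canonical diagrams nor any control of minimal polynomials, works verbatim in every characteristic (in particular it avoids the paper's appeal to the nonzero roots of $x^l-\delta^{lm_0}$ being simple, a point that is delicate when $\mathrm{char}(F)$ divides $l$), and it subsumes the eigenvalue-multiplicity part of Theorem~\ref{theo same ct implies same eigenvalues} as a corollary. What the paper's route buys: the intermediate facts it establishes (the explicit shape $x^t(x^l-\delta^{lm_0})$ of an annihilating polynomial, eigenvalues of the form $\delta^{m_0}u$ with $u$ a root of unity) are exactly what is reused later for Corollary~\ref{cor sum of eigenvals of any diagram} and the definition of modular characters, information the bare Sylvester argument does not produce.
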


\begin{proof}
By Theorem~\ref{theo same ct implies same eigenvalues} we know that
the minimal polynomials of diagrams with the same cycle type only
differ by a power of the indeterminate. Thus, by
Proposition~\ref{prop eigenvals are roots of delta}, the minimal
polynomial of $\rho(d_1)$ and $\rho(d_2)$ both divide
$x^t(x^{l}-\delta^{lm_0})$ for some sufficiently large $t \in \N$.

Notice that the non-zero roots of this polynomial all have
multiplicity $1$ and are of the form $\delta^{m_0}u$ for some $l$th
root of unity $u$. Thus, by linear algebra, the size of any Jordan
block corresponding to a non-zero eigenvalue is equal to $1$. We may
conclude that the algebraic and geometric multiplicities of each
non-zero eigenvalue coincide, in other words, for each $0 \neq
\lambda \in F$ the dimension of the eigenspace corresponding to
$\lambda$ is the same as the multiplicity of $\lambda$ as a root of
the characteristic polynomial.

But by Theorem~\ref{theo same ct implies same eigenvalues} we know
that the geometric multiplicities for each non-zero eigenvalue are
the same among diagrams with the same cycle type. Thus, if two
diagrams have the same cycle type then the non-zero eigenvalues also
have the same algebraic multiplicities. But then the same must also
hold for the eigenvalues which are zero since the characteristic
polynomials of matrices of a fixed size have the same number of
roots.
\end{proof}

Let us record  a direct consequence for later use:
\begin{cor}
\label{cor sum of eigenvals of any diagram} Let $\rho: \brd \to
M_n(F)$ be any $\brd$-representation,  $d \in \brd$ be a diagram of
cycle type  $\mu=(\mu_1,\mu_2,\ldots,\mu_k,0^m)$ with $\mu_i \neq 0$
for $i=1,\ldots,k$. Suppose that $\mu'=(\mu_1,\mu_2,\ldots,\mu_k)$ is a
$p$-special partition. Then the eigenvalues of $\rho(d)$ are either
zero or of the form $\delta^{m_0}u$ where $u$ is some $l$th roots of
unity for some integer $l$ which divides the order of $\Sigma_r$ but is not divisible by $p$.
\end{cor}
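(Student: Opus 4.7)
The plan is to reduce the problem to the canonical diagram $d_\mu$ from Definition~\ref{defn canonical diagram of cycle type} and then invoke the two preceding results in sequence. By Theorem~\ref{thm same ct implies same char poly}, the fact that $d$ and $d_\mu$ share the cycle type $\mu$ implies that $\rho(d)$ and $\rho(d_\mu)$ have identical characteristic polynomials, and hence identical eigenvalues (counted with multiplicity). So it suffices to describe the eigenvalues of $\rho(d_\mu)$.

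Next, I apply Proposition~\ref{prop eigenvals are roots of delta} to $d_\mu$: the minimal polynomial of $\rho(d_\mu)$ divides $x(x^{l}-\delta^{lm})$, where, as the proof of that proposition makes explicit, $l$ is the order of the permutation $\gamma_{\mu'}\in\Sigma_{|\mu'|}$. Since $\gamma_{\mu'}$ is a disjoint product of cycles of lengths $\mu_1,\dots,\mu_k$, this means $l=\mathrm{lcm}(\mu_1,\ldots,\mu_k)$. Consequently every non-zero eigenvalue of $\rho(d_\mu)$, and therefore of $\rho(d)$, is of the form $\delta^{m}u$ with $u^l=1$.

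It remains to verify the two divisibility properties of $l$ asserted in the statement. For the first, $l$ is the order of an element of $\Sigma_{|\mu'|}$, and since $|\mu'|\le r$ we may view $\gamma_{\mu'}$ as an element of $\Sigma_r$; Lagrange's theorem then gives $l\mid|\Sigma_r|$. For the second, the hypothesis that $\mu'$ is $p$-special means $p\nmid\mu_i$ for every $i=1,\ldots,k$; if $p$ were to divide $l=\mathrm{lcm}(\mu_1,\ldots,\mu_k)$ then by primality $p$ would divide some $\mu_i$, a contradiction.

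Because the corollary is essentially a packaging of Theorem~\ref{thm same ct implies same char poly} and Proposition~\ref{prop eigenvals are roots of delta}, there is no substantial obstacle; the only non-automatic observation is that the $p$-speciality assumption on $\mu'$ is exactly what is needed to ensure that $\mathrm{lcm}(\mu_1,\ldots,\mu_k)$ is coprime to $p$.
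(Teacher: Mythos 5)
Your proposal is correct and follows essentially the same route as the paper: reduce to the canonical diagram $d_\mu$ via Theorem~\ref{thm same ct implies same char poly}, apply Proposition~\ref{prop eigenvals are roots of delta}, and observe that $l$, being the order of a permutation of cycle type $\mu'$, divides $|\Sigma_r|$ and is coprime to $p$ by $p$-speciality. Your identification of $l$ as $\mathrm{lcm}(\mu_1,\ldots,\mu_k)$ just makes explicit what the paper's proof states when it says $l$ is the order of $\gamma_{\mu'}$.
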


\begin{proof}
 By Theorem~\ref{thm same ct implies same char poly},
we know that $\rho(d)$ has the same characteristic polynomial as $\rho(d_{\mu})$ where $d_{\mu}$ is the canonical diagram of cycle type $\mu$. So, in particular, $\rho(d)$ and $\rho(d_{\mu})$ have the same eigenvalues including multiplicities. But by Proposition~\ref{prop eigenvals
are roots of delta} it follows that each eigenvalue of $\rho(d_{\mu})$ is of the form
$\delta^{m_0} u$ for some $l$th root of unity $u$. Notice that, by construction, $l$ will precisely be the order of a permutation of cycle type $\mu'$. Thus, $l$ will divide the order of $\Sigma_{|\mu|}$ and hence the order of $\Sigma_r$. Also, since $\mu$ is $p$-special, we obtain that $l$ is not divisible by $p$. The result follows.
\end{proof}

\section{Implications for character theory}
We will now study what the results on eigenvalues of matrix
representations imply for character theory. In particular, we will use these to define modular characters for Brauer algebras which are analogues of Brauer character for groups. We will also obtain a theory of ordinary characters. In the semisimple,
complex case these were first studied by Arun Ram in
\cite{Ram}. He studied the characters of the generic Brauer algebra
and showed that apart from finitely many values of $\delta$ the
characters of $\brd$ can be obtained from the characters of the generic Brauer algebra by
specialisation, that is by evaluating $x$ at $\delta$. He also showed
that these characters are closely connected to the characters of the
symmetric group $\Sigma_r$. We are going to extend Ram's work to arbitrary
values of $\delta$, including the case when the Brauer algebra is
not semisimple. 
\subsection{Properties of ordinary characters}
We will start with defining what we mean by an ordinary character of the Brauer algebra:

\begin{defn}
Let $D$ be the standard basis of diagrams for $\brd$ and suppose
$\rho: \brd \to M_n(F)$ is a matrix representation of $\brd$. Then
the character associated to $\rho$, denoted $\chi_{\rho}$, is a
function $\chi_{\rho}: D \to F$ given by $$
\chi_{\rho}(d)=\tr(\rho(d)),$$ where $d \in D$ and $\tr$ denotes the
trace of a matrix.
\end{defn}

\begin{rk}
\begin{enumerate}[(i)]
\item Notice that we define characters for diagrams only. This follows the approach of group character theory where characters usually are defined on group elements instead of elements of the group algebra. Of course, one could extend this notion linearly to the entire algebra.
\item As usual the character does not depend on the choice of basis since the trace, and hence the character, is a class functions that is for any $a,b \in D$, we have $\tr( \rho(a)\rho(b)) = \tr (\rho(b)\rho(a))$.
\end{enumerate}
\end{rk}

Just as conjugate elements in a group have the same character we can immediately deduce:

\begin{theo}
\label{theo char const on gct} Let $\chi$ be a $\brd$-character and
suppose two diagrams $d_1,d_2 \in \brd$ have the same generalized
cycle type (defined in Definition~\ref{defnGCT}). Then the
corresponding character values are equal, that is $\chi(d_1) =
\chi(d_2)$.
\end{theo}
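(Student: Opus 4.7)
The plan is to deduce this statement as an immediate corollary of Theorem~\ref{theorem sigma conjugate iff gct the same} combined with the classical conjugation-invariance of the trace. Since $d_1$ and $d_2$ have equivalent generalized cycle types, Theorem~\ref{theorem sigma conjugate iff gct the same} produces a permutation $\sigma \in \Sigma_r$, viewed as an invertible element of $\brd$, such that $d_2 = \sigma^{-1} d_1 \sigma$.

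Next, I would apply the matrix representation $\rho: \brd \to M_n(F)$ whose character is $\chi$. Because $\rho$ is an algebra homomorphism and $\sigma$ is a unit of $\brd$ (so that $\rho(\sigma)$ is an invertible matrix with $\rho(\sigma)^{-1} = \rho(\sigma^{-1})$), we obtain
\[
\rho(d_2) \;=\; \rho(\sigma^{-1}) \rho(d_1) \rho(\sigma) \;=\; \rho(\sigma)^{-1} \rho(d_1) \rho(\sigma).
\]
Taking traces and using the cyclic property $\tr(AB) = \tr(BA)$ now yields $\chi(d_2) = \tr(\rho(d_2)) = \tr(\rho(d_1)) = \chi(d_1)$, as required. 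This last step is exactly the class-function observation recorded in the remark immediately preceding the theorem.

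There is no real obstacle here: the entire content of the statement has been built into Theorem~\ref{theorem sigma conjugate iff gct the same}, and what remains is a one-line application of a standard fact from linear algebra. The only point worth checking carefully is that $\rho(\sigma)$ is genuinely invertible, which is automatic since $\sigma \in \Sigma_r$ lies in the group of units of $H$ and $\rho$ preserves products and the identity.
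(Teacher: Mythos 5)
Your proof is correct and is essentially identical to the paper's: invoke Theorem~\ref{theorem sigma conjugate iff gct the same} to obtain $d_2=\sigma^{-1}d_1\sigma$ for some $\sigma\in\Sigma_r$, apply $\rho$, and conclude by the cyclic property of the trace. (As a minor remark, you do not even need invertibility of $\rho(\sigma)$: cyclicity gives $\tr\bigl(\rho(\sigma^{-1})\rho(d_1)\rho(\sigma)\bigr)=\tr\bigl(\rho(\sigma)\rho(\sigma^{-1})\rho(d_1)\bigr)=\tr(\rho(d_1))$ directly.)
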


\begin{proof}
Let $\rho$ be a representation with associated character $\chi$. By
Theorem~\ref{theorem sigma conjugate iff gct the same}, we know that
$d_1$ and $d_2$ are $\Sigma_r$-conjugate, say by a permutation
$\sigma \in \Sigma_r$. Since the trace is a class function,
$\chi(d_2)= \tr(\rho(d_2)) = \tr\left( \rho(\sigma \inv) \rho(d_1) \rho(\sigma) \right) = \tr(\rho(d_1)) = \chi(d_1) $. 
\end{proof}

The next proposition summarizes the properties of ordinary
characters. They follow immediately from Theorem~\ref{thm same ct implies same char poly} and Theorem~\ref{thm char of arbitrary diagram}, respectively.

\begin{theo}
\label{theo properties of ordinary chars}
\begin{enumerate}[(i)]
\item Suppose $d_1, d_2 \in \brd$ are diagrams with the same cycle type $\mu$ and let $\chi$ be a $\brd$-character. Then $\chi(d_1)=\chi(d_2)=:\chi(\mu)$.
\item Let $(\alpha)$ be an arbitrary cycle type, with $\alpha$ a list of non-negative integers, and suppose $\chi$ is a $\brd$-character. Then we have in the notation of the first part that $$ \chi((\alpha,0) ) = \delta \chi((\alpha)).$$
\end{enumerate}

\end{theo}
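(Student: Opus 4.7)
The plan is to deduce both parts directly from the results already established in Sections 3 and 4, using the fact that the trace is encoded in the characteristic polynomial and is invariant under the $(uv,vu)$-relation.

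For part (i), let $\rho$ be a matrix representation of $B_r(\delta)$ with character $\chi = \chi_\rho$. Since $d_1$ and $d_2$ have the same cycle type $\mu$, Theorem~\ref{thm same ct implies same char poly} guarantees that $\rho(d_1)$ and $\rho(d_2)$ have the same characteristic polynomial. The trace of a matrix equals the sum of the eigenvalues counted with algebraic multiplicity, equivalently the negative of the coefficient of $x^{n-1}$ in the characteristic polynomial. Hence $\chi(d_1) = \tr(\rho(d_1)) = \tr(\rho(d_2)) = \chi(d_2)$, and we may unambiguously define $\chi(\mu)$ to be this common value.

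For part (ii), pick any diagram $d$ of cycle type $(\alpha,0)$ and any diagram $d'$ of cycle type $(\alpha)$. By Theorem~\ref{thm char of arbitrary diagram}, we have $d \sim \delta d'$ in the sense of Definition~\ref{defn of gct equiv rel}. Now observe that the trace of any representation is invariant under the relation $\sim_q$: whenever $d_1 = qd_2$ and $d_2' = d_2 q$, the standard identity $\tr(AB)=\tr(BA)$ applied to $A = \rho(q)$ and $B = \rho(d_2)$ yields $\tr(\rho(d_1)) = \tr(\rho(d_2')$; the case of identical generalized cycle type is already covered by Theorem~\ref{theo char const on gct}; and $\sim$ is merely the transitive closure of $\sim_q$. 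Therefore $\tr(\rho(d)) = \tr(\rho(\delta d')) = \delta\,\tr(\rho(d'))$, which by part (i) translates to $\chi((\alpha,0)) = \delta\, \chi((\alpha))$.

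Neither step presents a real obstacle: all the work has been done in Theorems~\ref{theo ct determined uv conjugation}, \ref{thm char of arbitrary diagram}, and \ref{thm same ct implies same char poly}. The only point worth spelling out carefully is that $\sim$-equivalence preserves traces of representations, which is the cheap half of $\chi$-conjugacy and follows entirely from $\tr(AB)=\tr(BA)$ combined with transitivity; this is what converts the combinatorial equivalences of Section 3 into identities of characters.
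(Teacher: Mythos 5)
Your proposal is correct and takes essentially the same route as the paper: part (i) is precisely the trace consequence of Theorem~\ref{thm same ct implies same char poly}, and part (ii) is Theorem~\ref{thm char of arbitrary diagram} combined with the observation that $\sim$-equivalence (hence $(uv,vu)$-conjugacy) preserves traces via $\tr(AB)=\tr(BA)$, which is exactly what the paper leaves implicit when it says the statements ``follow immediately'' from those two theorems.
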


\begin{rk}
Notice that in the special case when the Brauer algebra is semisimple these results on characters are implied by work of Ram, see Theorem 3.1 in \cite{Ram}.
\end{rk}

We will from now on adopt the following notation:
\begin{note}
For any $\brd$-character $\chi$ and any cycle type $\mu$ we denote by $\chi(\mu)$ the value $\chi$ takes on some, or equivalently all, diagrams of cycle type $\mu$.
\end{note}

\subsection{Definition of modular characters}
In this section we define modular characters for Brauer
algebras. The treatment will broadly follow the
definition of Brauer characters in Chapter 15 of \cite{Isaacs}.

Suppose we are given a matrix representation $\rho$ of a finite group $G$ over a field of prime characteristic $p$. Then one can define a function from the $p$-regular elements of $G$, that is elements with order coprime to $p$, to a field of characteristic $0$ whose properties resemble many of those of ordinary characters. This function is called the Brauer character of $\rho$ and can be defined for any $p$-regular element $g \in G$ in the following way: Determine the eigenvalues of $\rho(g)$, lift these to characteristic $0$ and take the sum of these lifted eigenvalues.  Since the eigenvalues of $\rho(g)$ will always be $k$th roots of unity of the field of characteristic $p$, the Brauer characters are well-defined.   This follows since $k$th roots of unity lift uniquely to $k$th roots of unity of the corresponding field of characteristic $0$, provided $k$ satisfies certain properties, see Proposition~\ref{prop roots of 1 lift uniquely}.

The approach for Brauer algebras has to be modified slightly because the eigenvalues of the corresponding matrices will not be roots of unity in general. In fact they might even be zero.
Furthermore, for ordinary characters we can always extend the definition of a character from the group to the group algebra by linear extension. Since the group algebra and the Brauer character values are over different fields, this extension will not work. Hence Brauer characters can only be evaluated at group elements.
This is the reason why we have made a similar adjustment for the ordinary characters of the Brauer algebra, by restricting the domain of the characters to the  basis of diagrams.

Notice that a symmetric group element is $p$-regular if and only if its cycle type is $p$-special. We will start by defining what the analogues of the $p$-regular  elements are:

\begin{defn}
A cycle type $\mu=(\ldots,2^{m_2},1^{m_1},0^{m_0})$ with $m_i \in \N_0$ is called $p$-special if the partition $\mu' = (\ldots,2^{m_2},1^{m_1})$ formed by the non-zero parts of $\mu$ is $p$-special. A diagram $d \in \brd$ is called $p$-special if its cycle type is $p$-special. Denote by $D_{p'}$ the subset of the diagram basis $D$ of $\brd$ consisting of $p$-special diagrams. As before, we also define $C$ to be the set of partitions of $r-2k$ for $0\leq k \leq [r/2]$ and $C_{p'}$ to be the subset of $C$ consisting of $p$-special partitions.
\end{defn}

 We are now ready to define modular characters for the Brauer algebra:

\begin{defn}
\label{defn brauer chars}
 Let $\rho: \brd \to M_n(F)$ be an $F$-representation of $\brd$ and let $D_{p'}$ be the set of all $p$-special diagrams in $\brd$. The modular character $\varphi_{\rho}$ associated to $\rho$ is a function $\varphi_{\rho}: D_{p'} \to S$ defined as follows: Let $d \in D_{p'}$ be any $p$-special diagram with cycle type $\mu=(\ldots,2^{m_2},1^{m_1},0^{m_0})$. Then $\rho(d)$ has eigenvalues $\delta^{m_0} u_1,\ldots,\delta^{m_0}u_n$ for some $l$th roots of unity $u_i \in F$. Here $l \in \N$ divides $|\Sigma_r|$ but is not divisible by $p$. Then $$ \varphi_{\rho}( d)  := \hat{\delta}^{m_0} (\hat{u}_1 +  \hat{u}_2 + \ldots + \hat{u}_n). $$
\end{defn}

\begin{rk}
Notice that there is a certain amount of ambiguity because of the choice of $\hat{\delta}$. However, taking into account that the semisimplicity of $\brd$ in characteristic $0$ depends heavily on the choice of $\delta$, the choice of $\hat{\delta}$ will give us some flexibility which can be useful in practice.
\end{rk}

\subsection{Properties of modular characters}
In this section we will state some properties which justify the definition of modular characters.  We can immediately deduce:

\begin{prop}
\label{prop modchar properties}

Let $\rho: \brd \to M_n(F)$ be an $F$-representation of $\brd$ and let $\varphi_{\rho}: D_{p'} \to S$ be the associated modular character. Then:
\begin{enumerate}[(i)]
\item The modular character  $\varphi_{\rho}$ is well-defined.
\item Equivalent representations afford the same modular character.
\item  If $d,d' \in D_{p'}$ have the same cycle type then $ \varphi_{\rho} (d) = \varphi_{\rho}(d')$. In particular this is true if they are $\Sigma_r$-conjugate.
\item Reducing the modular character $\varphi_{\rho}$ modulo $\Pi$ gives the ordinary trace character $\chi_{\rho}$:  $$ \overline{\varphi_{\rho} (d)} = \varphi_{\rho} (d) + \Pi = \chi_{\rho}(d) $$  for any diagram $d \in D_{p'}$.
\item Let $\pi \in \Sigma_r \subset D_{p'}$ be a $p$-special permutation diagram and denote by $\varphi^{\Sigma_r}_{\rho}$ the Brauer character corresponding to the representation $\rho$ restricted to the symmetric group $\Sigma_r$. Then $$\varphi^{\Sigma_r}_{\rho}(\pi) = \varphi_{\rho}(\pi).$$
\end{enumerate}
\end{prop}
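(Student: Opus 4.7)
The plan is to verify each of the five assertions as a direct consequence of results already proved earlier in the paper. The arguments are essentially formal and no new combinatorial or representation-theoretic machinery is needed; the only point requiring genuine care is the well-definedness in~(i). Throughout, I will draw on Corollary~\ref{cor sum of eigenvals of any diagram} to control the shape of the eigenvalues of $\rho(d)$, on Proposition~\ref{prop roots of 1 lift uniquely} to produce lifts from $F$ to $R$, and on Theorem~\ref{thm same ct implies same char poly} for the invariance of the characteristic polynomial under the cycle type.

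For (i) I would proceed as follows. Corollary~\ref{cor sum of eigenvals of any diagram} guarantees that each non-zero eigenvalue of $\rho(d)$ has the form $\delta^{m_0}u$ with $u$ an $l$th root of unity in $F$ for some $l \mid |\sr|$ not divisible by $p$, while zero eigenvalues are lifted to $0$. Proposition~\ref{prop roots of 1 lift uniquely} then produces a unique $\hat{u} \in R$ mapping to $u$ under reduction modulo $\Pi$. The one subtlety is independence from the choice of $l$: if $u$ is simultaneously an $l$th and $l'$th root of unity with the same divisibility properties, then $u$ lies in the group of $\gcd(l,l')$th roots of unity, and because the projection modulo $\Pi$ restricts to a group isomorphism on every such subgroup, the two candidate lifts agree. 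This makes $\hat{\delta}^{m_0}(\hat{u}_1+\cdots+\hat{u}_n)$ intrinsic to $d$ and $\rho$.

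Parts~(ii)--(iv) I would then dispatch in short order. For (ii), equivalent representations $\rho' = T\rho T\inv$ give $\rho'(d)$ similar to $\rho(d)$, hence identical multisets of eigenvalues, hence the same modular character value. For (iii), Theorem~\ref{thm same ct implies same char poly} shows that $\rho(d_1)$ and $\rho(d_2)$ share a characteristic polynomial whenever $d_1,d_2$ have the same cycle type, and $\varphi_\rho$ depends only on this multiset; the $\sr$-conjugacy case is a special case via Theorem~\ref{theorem sigma conjugate iff gct the same} combined with Proposition~\ref{prop same gct implies same ct}. For (iv), the reduction map sends $\hat{\delta}$ to $\delta$ and each $\hat{u}_i$ to $u_i$ by construction of the inverse lift, so $\overline{\varphi_\rho(d)} = \delta^{m_0}(u_1+\cdots+u_n)$; this equals $\tr(\rho(d)) = \chi_\rho(d)$ because over the algebraically closed field $F$ the trace of a matrix is the sum of its eigenvalues counted with algebraic multiplicity.

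Finally, for (v) I would begin by observing that a permutation diagram $\pi \in \sr$ has only through arcs, so its diagram-theoretic cycle type coincides with its group-theoretic cycle type and in particular has no zero parts; hence $m_0 = 0$ and the factor $\hat{\delta}^{m_0}$ drops out of $\varphi_\rho(\pi)$. The hypothesis that the cycle type is $p$-special is exactly the condition that $\pi$ has order coprime to $p$, so the eigenvalues of $\rho|_{\sr}(\pi) = \rho(\pi)$ are $l$th roots of unity with $p \nmid l$, and the standard definition of the Brauer character of $\sr$ gives $\varphi_\rho^{\sr}(\pi) = \hat{u}_1 + \cdots + \hat{u}_n = \varphi_\rho(\pi)$. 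I do not anticipate any significant obstacle in this proof. The only step that merits attention is the compatibility check in~(i): the rest reduces to invoking the earlier eigenvalue theorems and unwinding the definition of the lift.
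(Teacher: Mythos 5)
Your proposal is correct and follows essentially the same route as the paper: well-definedness via Proposition~\ref{prop roots of 1 lift uniquely}, parts (ii)--(iii) via similarity of matrices and Theorem~\ref{thm same ct implies same char poly} (with Theorem~\ref{theorem sigma conjugate iff gct the same} and Proposition~\ref{prop same gct implies same ct} for the $\Sigma_r$-conjugacy case), and (iv)--(v) by unwinding the definition of the lift and noting $m_0=0$ for permutation diagrams. Your extra checks (independence of the choice of $l$, and invoking algebraic closedness of $F$ so that the trace is the sum of eigenvalues) are slightly more careful than the paper's brief treatment but do not constitute a different approach.
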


\begin{proof}
\begin{enumerate}[(i)]
\item Once $\hat{\delta}$ is fixed, the only possible ambiguity is the choice of the lifted eigenvalues $\hat{u}_i$. But by Proposition~\ref{prop roots of 1 lift uniquely} these lifts are unique as well since  the natural number $l$ divides $|\Sigma_r|$ but is not divisible by $p$.
\item Suppose $\rho$ and $\rho'$ are equivalent representations and let $d$ be any diagram with cycle type $\mu=(\ldots,2^{m_2},1^{m_1},0^{m_0})$.  Then the matrices $\rho(d)$ and $\rho'(d)$ are conjugate and hence have the same eigenvalues. 

\item By Theorem~\ref{thm same ct implies same char poly}, the matrices $\rho(d)$ and $\rho(d')$ have the same non-zero eigenvalues, including multiplicities. Hence, by definition, they also the same modular character.  The second part then follows by Proposition~\ref{prop same gct implies same ct} and Theorem~\ref{theorem sigma conjugate iff gct the same}.
\item This part follows straight from the definition because the modular character was precisely defined as the sum of the lifted eigenvalues. Hence reducing this lift modulo $\Pi$ will recover the original sum of eigenvalues which is equal to the trace.
\item Since $\pi$ is a permutation, its cycle type $\mu=(\ldots,2^{m_2},1^{m_1},0^{m_0})$  satisfies $m_0=0$. Thus, if $u_1,\ldots,u_n$ are the eigenvalues of $\rho(\pi)$ then  $$\varphi_{\rho}(\pi) = \left( \hat{u}_1+ \ldots + \hat{u}_n \right) = \varphi^{\Sigma_r}_{\rho}(\pi).$$ The last equality follows by the definition of the Brauer character.
\end{enumerate}

\end{proof}

\begin{rk}
\label{rk mod char lab by ct}
\begin{enumerate}[(i)]
\item Since modular characters are constant on diagrams with the same cycle type, we will frequently view modular characters as functions of $p$-special cycle types, as opposed to $p$-special diagrams. Therefore, if $d \in \brd$ is a diagram of cycle type $\mu$, we write $\varphi_{\rho}(\mu):=\varphi_{\rho}(d)$.
\item Straight from the definition, we get the following property which is similar to Theorem~\ref{theo properties of ordinary chars}: If $(\alpha)$ is an arbitrary cycle type, with $\alpha$ a list of non-negative integers, then we have $$ \varphi_{\rho}((\alpha,0) ) = \hat{\delta} \varphi_{\rho}((\alpha)).$$
\end{enumerate}
\end{rk}

The following properties are well known for Brauer characters of finite groups and their proof for Brauer algebras is virtually identical:

\begin{prop}[See for example Chapter 40 in \cite{Kuelshammer}]
\label{prop same cf iff same modular char}
\begin{enumerate}[(i)]
\item Let $\{ S_i \}_{i \in I}$ be a complete set of irreducible $\brd$-modules and denote by $\{ \varphi_{i }\}_{i \in I}$ the corresponding irreducible modular characters. Then $\{ \varphi_{i }\}_{i \in I}$ is a linearly independent set.
\item  If $N \leq M$ are $\brd$-modules with associated modular characters $\varphi_M$ and $\varphi_N$, then $$ \varphi_{M} = \varphi_{N} +\varphi_{M/N}.$$ In particular, two modules have the same modular character if and only if they have the same composition factors.
\end{enumerate}
\end{prop}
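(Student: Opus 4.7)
I would pick a composition series $0 = M_0 \subset M_1 \subset \cdots \subset M_k = M$ refining the filtration $0 \subset N \subset M$, and for each $p$-special diagram $d \in D_{p'}$ choose a basis of $M$ adapted to this filtration.  Then $\rho_M(d)$ is block upper-triangular with diagonal blocks representing the action of $d$ on the successive quotients $M_i/M_{i-1}$, so the multiset of eigenvalues of $\rho_M(d)$ is the disjoint union of the multisets of eigenvalues of $\rho_N(d)$ and $\rho_{M/N}(d)$.  By Corollary~\ref{cor sum of eigenvals of any diagram}, every such eigenvalue is either zero or of the form $\delta^{m_0}u$ with $u$ an $l$-th root of unity, where $l \mid |\Sigma_r|$ and $p \nmid l$, so by Proposition~\ref{prop roots of 1 lift uniquely} its lift to $R$ is uniquely determined.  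Summing these lifts therefore gives $\varphi_M(d) = \varphi_N(d) + \varphi_{M/N}(d)$.  Iterating through a composition series yields $\varphi_M = \sum_i \varphi_{S_i}$ where $S_i$ runs over the composition factors with multiplicity, and the ``in particular'' statement follows once part~(i) is established.

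\textbf{Plan for part (i), reduction via the decomposition matrix.}  My strategy is to mirror the classical Brauer-Nesbitt argument for finite groups.  Order the simple and cell modules as in Theorem~\ref{theo form of dec matrix}, so that the decomposition matrix $D_{\brd}$ is lower triangular with $1$'s on the diagonal.  The top $|C_p|\times |C_p|$ square block (rows indexed by the cell modules $\{S^{(t,\mu)}\}_{\mu \in C_p}$) is then unitriangular and in particular invertible over $\Z$.  Part~(ii), applied to a composition series of $S^{(t,\lambda)}$, gives
\begin{equation*}
\varphi_{S^{(t,\lambda)}} \;=\; \sum_{\mu \in C_p}\, [S^{(t,\lambda)}:D^{(t',\mu)}]\; \varphi_{D^{(t',\mu)}},
\end{equation*}
so the modular characters of the cell modules $\{S^{(t,\mu)}\}_{\mu \in C_p}$ are related to the irreducible modular characters by an invertible integer matrix.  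Hence $\{\varphi_{D^{(t,\mu)}}\}_{\mu \in C_p}$ is linearly independent if and only if $\{\varphi_{S^{(t,\mu)}}\}_{\mu \in C_p}$ is.

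\textbf{Plan for part (i), lifting argument.}  To handle the cell modules I would lift to characteristic zero.  The cell modules admit natural $R$-forms $S^{(t,\mu)}_R$ (built from integral forms of Specht modules together with the $\Z$-basis of partial diagrams), and extending scalars gives $B_r(\hat\delta)_S$-modules $S^{(t,\mu)}_S$ whose ordinary trace characters $\chi^S_{(t,\mu)}$ take values in $R$.  Using Proposition~\ref{prop modchar properties}(iv) together with the uniqueness of the lift of each eigenvalue (Proposition~\ref{prop roots of 1 lift uniquely}), one checks that for any $d \in D_{p'}$ the lifted trace $\chi^S_{(t,\mu)}(d)$ coincides with the modular character $\varphi_{S^{(t,\mu)}}(d)$.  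Exploiting the flexibility in the choice of $\hat\delta$ noted after Definition~\ref{defn brauer chars}, I would choose $\hat\delta \in R$ avoiding the finitely many values forbidden by Theorem~\ref{thm when is br semisimple}, so that $B_r(\hat\delta)_S$ is semisimple.  Then the $S^{(t,\mu)}_S$ are pairwise non-isomorphic simple $B_r(\hat\delta)_S$-modules, so their ordinary characters $\chi^S_{(t,\mu)}$ are linearly independent $S$-valued class functions on $B_r(\hat\delta)_S$, and in particular on the subset of $p$-special diagrams.  Combining this with the previous paragraph gives linear independence of $\{\varphi_{D^{(t,\mu)}}\}_{\mu \in C_p}$.

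\textbf{Main obstacle.}  The technical heart is the last step: ensuring that the linear independence of the lifted characters as class functions on $B_r(\hat\delta)_S$ actually descends to linear independence of their evaluations only on the (a priori smaller) set of $p$-special diagrams.  The reason this works is that part~(ii) together with the lifting of eigenvalues reduces the question to a statement about the corresponding matrices over $R$, and there the linear independence of $\{\chi^S_{(t,\mu)}\}_{\mu \in C_p}$ as class functions over $S$ implies linear independence of their restrictions to any sufficiently rich subset — and the diagrams of the canonical form $d_\nu$ from Definition~\ref{defn canonical diagram of cycle type} (for $\nu$ running over $p$-special cycle types) provide that rich subset, paralleling Ram's analysis in the semisimple case.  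Once this is in place, the rest of the argument is virtually identical to the standard proof for finite groups (cf. \cite{Kuelshammer}).
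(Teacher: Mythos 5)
Your part (ii) is fine: the block-triangularization of $\rho_M(d)$ along a composition series, additivity of the eigenvalue multiset, and uniqueness of the lifts give $\varphi_M=\varphi_N+\varphi_{M/N}$, exactly as in the finite-group case (the paper itself gives no proof here, referring to the standard argument in \cite{Kuelshammer}). Likewise the reduction of part (i) to the cell-module characters via the unitriangular partial decomposition matrix of Theorem~\ref{theo form of dec matrix} is sound, and the identification of $\varphi_{S^{(t,\mu)}}$ with the lifted trace character $\chi^S_{(t,\mu)}$ on $D_{p'}$ for a semisimple lift $B_r(\hat\delta)_S$ is essentially correct (at least for $\delta\neq 0$).

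The gap is exactly at the step you label the ``main obstacle'', and your paragraph there does not close it. Linear independence of the $\chi^S_{(t,\mu)}$ as class functions on all of $B_r(\hat\delta)_S$ does \emph{not} descend to their restrictions to a subset; ``any sufficiently rich subset'' is not an argument, and no reason is given why the $p$-special canonical diagrams are rich enough. Concretely, you need the square submatrix of the characteristic-zero character table with rows indexed by $C_p$ and columns by the $p$-special cycle types to be nonsingular; this does not follow from nonsingularity of the full table (square submatrices of invertible matrices can be singular), and after reducing mod $\Pi$ this submatrix is $D'_{\brd}\,\Xi_{p'}$, so its invertibility is essentially equivalent to the linear independence you are trying to prove --- the argument begs the question at the decisive point. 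The ingredient that actually makes the classical Brauer--Nesbitt proof carry over is the characteristic-$p$ statement that a trace character at an arbitrary diagram is determined by its values on $p$-special diagrams: $\chi((\alpha,0^{m_0}))=\delta^{m_0}\chi((\alpha))$ (Theorem~\ref{theo properties of ordinary chars}) together with the $p'$-part identity $\chi(\mu)=\chi(\mu_{p'})$ (Proposition~\ref{prop character only depends on p' part}, whose proof does not use part (i) of the present proposition, so there is no circularity). Given these, a relation $\sum_\lambda a_\lambda\varphi_\lambda=0$ on $D_{p'}$ with $a_\lambda\in S$ can be scaled so that all $a_\lambda\in R$ and not all lie in $\Pi$; reducing mod $\Pi$ and using Proposition~\ref{prop modchar properties}(iv) yields a relation among the ordinary irreducible trace characters on $D_{p'}$, which the $p'$-part identity extends to all diagrams and hence, by linearity, to all of $\brd$, contradicting the standard (Wedderburn/density) linear independence of trace characters of pairwise non-isomorphic simple modules. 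Without this ingredient, or some substitute proof that your restricted square matrix is nonsingular, part (i) remains unproved.
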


\section{Computation of characters}
\label{section computation of characters}
We will now turn our attention to irreducible characters, both ordinary and modular, and show how we can relate them to irreducible characters of smaller Brauer algebras and symmetric groups. In particular, we will show that the character table of the symmetric group $\Sigma_r$ and the character table of the smaller Brauer algebra $B_{r-2} (\delta)$ are submatrices of the character table of $\brd$. This phenomenon was already shown in Corollaries 5.2-5.4 of \cite{Ram} in the case of the ordinary character table of semisimple Brauer algebras. His proof uses the irreducible characters of the orthogonal and symplectic groups and Schur-Weyl duality. We are going to generalize his results to the non-semisimple case and also show that they hold for the modular character table by comparing eigenvalues of matrix representations of certain cell modules.

\subsection{Definition of ordinary and modular character tables}
We will start by defining character tables both in the ordinary as well as in the modular case. We will first define the sets labelling the character tables:

\begin{note}
Recall that $C$ is the set of all partitions of $r-2k$ for $0 \leq k \leq [r/2]$, and $C_{p'}$ and $C_p$ are the subsets of $C$ consisting of all $p$-special and $p$-regular partitions, respectively, where $p$ is the characteristic of the field $F$. For any $\lambda \in C_p$ and $t=\frac{r-|\lambda|}{2}$, denote by  $\chi^{(t,\lambda)}: C \to F$ the ordinary character corresponding to the simple module $D^{(t,\lambda)}$ and by $\varphi^{(t,\lambda)}: C_{p'} \to S$ the modular character corresponding to the simple module $D^{(t,\lambda)}$. Notice that we view characters as a function of the cycle type, see Remark~\ref{rk mod char lab by ct}.
\end{note}

Our definition of character tables will be different from Ram's (see the end of Section $5$ in \cite{Ram}) and what one might expect from group character theory. Firstly, not all character values will be included in the character tables. However, it will be possible to compute all character values from the included values using Theorem~\ref{theo properties of ordinary chars} and Remark~\ref{rk mod char lab by ct}, respectively. For example, instead of recording $\chi^{(t,\lambda)}((2))$ and  $\chi^{(t,\lambda)}((2,0)) = \delta \cdot \chi^{(t,\lambda)}((2))$, we just record $\chi^{(t,\lambda)}((2))$.  Consequently, the character tables are independent of $\delta$. Secondly, ordinary group character tables are normally only defined in characteristic $0$. Because we particularly want to emphasize the relation of the characters to smaller Brauer algebras and symmetric groups, we will also define character tables for the case when the Brauer algebra is not semisimple and even in positive characteristic. Therefore, the rows will be labelled by the actual simple modules $D^{(t,\lambda)}$, instead of the generic simple modules $S^{(t,\lambda)}$.

\begin{defn}
\begin{enumerate}[(i)]
\item The ordinary character table of $\brd$, denoted by $\Xi_{\brd}$, is a matrix with rows indexed by $C_{p}$ and columns indexed by $C$. Furthermore, the $(\lambda,\mu)$th entry is given by $ \chi^{(t,\lambda)}(\mu)$ for $\lambda \in C_{p}, \mu \in C$ and $t=\frac{r-|\lambda|}{2}$.
\item The modular character table of $\brd$, denoted by $\Phi_{\brd}$, is a matrix with rows indexed by the set of $p$-regular partitions $C_{p}$ and columns indexed by the set of $p$-special partitions $C_{p'}$. Furthermore, the $(\lambda,\mu)$th entry is given by $\varphi^{(t,\lambda)}(\mu)$ for $\lambda \in C_{p}, \mu \in C_{p'}$ and $t=\frac{r-|\lambda|}{2}$.
\end{enumerate}
\end{defn}

\begin{rk}
The ordinary character table, together with Theorem~\ref{thm char of arbitrary diagram}, does indeed encode all the information needed to calculate the irreducible characters. This follows because
\begin{enumerate}[(i)]
\item  the set $C_{p}$ is the indexing set for the simple $\brd$-modules, see Remark~\ref{rk indexing sets of simple modules}. 
\item  the set $C$ is the indexing set for the cycle types with no $0$s, see Remark~\ref{rk which ct can occur}.
\end{enumerate}
A similar statement holds for modular characters.
\end{rk}

We get the following standard properties of the modular character table:

\begin{lem}
\begin{enumerate}[(i)]
\item The determinant of $\Phi$ is not in $\Pi$ and hence not a multiple of the characteristic $p$.
\item If $\Xi_{p'}$ denotes the ordinary character table $\Xi$ with columns restricted to the $p$-special partitions then $$\Xi_{p'} = D\Phi$$ where $D$ denotes the decomposition matrix of $\brd$.
\end{enumerate}
\end{lem}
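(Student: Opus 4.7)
The plan is to prove (ii) first by a direct calculation with composition series, and then to derive (i) from (ii) using the triangular shape of the decomposition matrix supplied by Theorem~\ref{theo form of dec matrix}.

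For (ii), interpret the rows of $\Xi$ as indexed by the set $C$ of all partitions (equivalently, by the cell modules $S^{(t,\lambda)}$), so that $\Xi_{p'}$ and $D\Phi$ have matching shape $|C|\times|C_{p'}|$. Fix $\lambda \in C$ with $t=(r-|\lambda|)/2$. By the definition of the decomposition matrix, the composition series of $S^{(t,\lambda)}$ contains each simple $D^{(t_\mu,\mu)}$ (with $t_\mu=(r-|\mu|)/2$, $\mu \in C_p$) exactly $D_{\lambda,\mu}$ times. Additivity of the trace on short exact sequences then gives
\[ \chi^{(t,\lambda)}(\nu) \;=\; \sum_{\mu \in C_p} D_{\lambda,\mu}\, \chi^{(t_\mu,\mu)}(\nu) \quad \text{for every cycle type } \nu. \]
Specialising to $\nu \in C_{p'}$ and invoking Proposition~\ref{prop modchar properties}(iv), which identifies $\chi^{(t_\mu,\mu)}(\nu)$ with the reduction modulo $\Pi$ of $\varphi^{(t_\mu,\mu)}(\nu)$, collects these identities into the single matrix identity $\Xi_{p'} = D\Phi$ (with $\Phi$ read through its reduction modulo $\Pi$).

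For (i), $\Phi$ is square of size $|C_p|=|C_{p'}|$ by the bijection in Remark~\ref{rk indexing sets of simple modules}, and every entry lies in $R$ since modular character values are $R$-combinations of roots of unity in $R$. By Theorem~\ref{theo form of dec matrix}, $D$ admits an ordering under which it is lower triangular with $1$s on the diagonal; in particular, the restriction $D'$ of $D$ to rows indexed by $C_p \subseteq C$ is a square unitriangular matrix with $\det D' = 1$. Restricting the identity from (ii) to these rows gives $\Xi_{p'}\big|_{C_p \text{ rows}} = \overline{D'\Phi}$, whence
\[ \det \bigl( \Xi_{p'}\big|_{C_p \text{ rows}} \bigr) \;=\; \det \overline{D'} \cdot \overline{\det \Phi} \;=\; \overline{\det \Phi} \quad \text{in } F. \]
Hence $\det \Phi \notin \Pi$ reduces to showing that the $C_p$-block of $\Xi_{p'}$ is invertible over $F$, equivalently that the ordinary characters $\chi^{(t,\lambda)}$ ($\lambda \in C_p$) of the simple $\brd$-modules are $F$-linearly independent when restricted to $p$-special cycle types.

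The final step is the principal obstacle. Linear independence over $S$ of the modular characters $\varphi^{(t,\lambda)}$, supplied by Proposition~\ref{prop same cf iff same modular char}(i), yields immediately that $\det \Phi \neq 0$ in $S$; the difficulty lies in upgrading this to $\det \Phi \notin \Pi$. I would mimic the classical proof for the Brauer character table of a finite group: assume $\sum c_\lambda \chi^{(t,\lambda)} = 0$ on all $p$-special cycle types for some $c_\lambda \in F$ not all zero; lifting the $c_\lambda$ to $\hat{c}_\lambda \in R$ via Proposition~\ref{prop roots of 1 lift uniquely} makes $\sum \hat{c}_\lambda \varphi^{(t,\lambda)}$ take values in $\Pi$, and after repeatedly dividing by a uniformiser of $R$ one obtains a nontrivial $S$-linear dependence among the $\varphi^{(t,\lambda)}$, contradicting the linear independence above. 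This reduction-modulo-$\Pi$ argument is the technical heart of the proof.
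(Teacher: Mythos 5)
Your part (ii) is fine and is essentially the paper's argument: additivity of the trace along a composition series of the cell module plus Proposition~\ref{prop modchar properties}(iv) (reduction of $\varphi$ modulo $\Pi$ is the ordinary trace character on $p$-special diagrams); your re-indexing of the rows by $C$, i.e.\ by the cell modules, is the reading the statement needs for the shapes to match and is what the paper's own proof (via composition factors of the $S^{(t,\lambda)}$) intends, with the identity understood modulo $\Pi$. Your reduction of part (i) to the single claim that the ordinary characters of the simple modules are $F$-linearly independent on $p$-special cycle types is also correct, and that claim is exactly what the paper's proof of (i) invokes.

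The gap is in your proposed proof of that final claim, and it is not a repairable detail: the argument runs in the wrong logical direction. If $\sum_\lambda \hat{c}_\lambda \varphi^{(t,\lambda)}$ takes all its values in $\Pi$, you cannot ``repeatedly divide by a uniformiser'' to produce an $S$-linear dependence among the $\varphi^{(t,\lambda)}$: the coefficients $\hat{c}_\lambda$ are units of $R$ (not in $\Pi$), and writing the function as $\pi f$ for a uniformiser $\pi$ gives an $f$ that has no reason to lie in the span of the $\varphi^{(t,\lambda)}$, so the process never terminates in a dependence. Indeed, the situation you must exclude, $\det\Phi \in \Pi \setminus \{0\}$, is perfectly compatible with $S$-linear independence of the rows (compare the constant functions $1$ and $1+\pi$), so Proposition~\ref{prop same cf iff same modular char}(i) cannot by itself yield $\det\Phi \notin \Pi$. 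Classically, and in this paper, the implication goes the other way: one proves the characteristic-$p$ independence directly and then deduces statements like (i) (and, for groups, the char-$0$ independence of Brauer characters). Here the direct argument is available from the paper's own results: the trace characters of pairwise non-isomorphic simple $\brd$-modules are linearly independent as functions on the diagram basis (the Wedderburn argument over a splitting field, i.e.\ linear independence of the rows of $\Xi$), Proposition~\ref{prop character only depends on p' part} gives $\chi(\mu)=\chi(\mu_{p'})$, so every column of $\Xi$ coincides with a $p$-special column; hence the square matrix $\overline{\Phi}$ (which equals $\Xi$ restricted to the $C_{p'}$-columns, by Proposition~\ref{prop modchar properties}(iv) and $|C_p|=|C_{p'}|$) has full rank, exactly as in Remark~\ref{rk linear indep of rows of char table}, and $\overline{\det\Phi}=\det\overline{\Phi}\neq 0$. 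A minor further slip: Proposition~\ref{prop roots of 1 lift uniquely} lifts roots of unity, not arbitrary elements of $F$; to lift the $c_\lambda$ one only needs surjectivity of $R \to F$.
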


\begin{proof}
\begin{enumerate}[(i)]
\item Reducing the Brauer characters modulo $\Pi$ gives precisely the ordinary characters which are linearly independent. Since the determinant of the  reduced matrix is just the reduction of the determinant  of $\Phi$ modulo $\Pi$, the result follows.
\item This part is clear from the definition of the decomposition matrix and the fact that two modules have the same character if and only if they have the same composition factors, see Proposition~\ref{prop same cf iff same modular char}.
\end{enumerate}
\end{proof}

\subsection{Structure of the character tables}
In this section, we will show that the character table of the symmetric group $\Sigma_r$ and the character table of the smaller Brauer algebra $B_{r-2} (\delta)$ are submatrices of the character table of $\brd$. We will first translate this into a statement on eigenvalues.

\begin{prop}
\label{prop same ct diff br same mod char}
Let $d_t \in \brd$ be a  diagram with cycle type  $\mu$ where  $|\mu|  \leq r-2$ and let $1 \leq t \in \N$. Let $d_{t-1} \in B_{r-2}(\delta)$ be a diagram with the same cycle type $\mu$. Suppose also that for $\lambda$ a partition of $r-2t$ the action of $d_t$ on the $\brd$-module $S^{(t,\lambda)}$ is given by some matrix $R_t$ and the action of $d_{t-1}$ on the $B_{r-2}(\delta)$-module $S^{(t-1,\lambda)}$ is given by some matrix $R_{t-1}$. Then $R_t$ and $R_{t-1}$ have the same non-zero eigenvalues including multiplicities.
\end{prop}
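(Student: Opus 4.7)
The plan is to bridge the two actions by a single diagram in $B_r(\delta)$, namely $\tilde d:=d_{t-1}\otimes e$, and extract the comparison from its action on $S^{(t,\lambda)}$. By Theorem~\ref{thm same ct implies same char poly} I may replace $d_t$ and $d_{t-1}$ by any representatives of the cycle type $\mu$ inside their respective Brauer algebras without changing $R_t,R_{t-1}$ up to conjugacy, so I will simply build $\tilde d$ from $d_{t-1}$. The extra $e$ block contributes one pure horizontal component, so if $\mu=(\alpha,0^{m_0})$ then $\tilde d$ has cycle type $(\alpha,0^{m_0+1})$ inside $B_r(\delta)$. Applying Theorem~\ref{thm char of arbitrary diagram} in $B_r(\delta)$ then gives the key equivalence $\tilde d\sim \delta d_t$ in the monoid $H$.

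Next I analyse the action of $\tilde d$ on $S^{(t,\lambda)}=S^\lambda\otimes V_{r,t}$ directly. Let $W'\subset V_{r,t}$ be the span of partial diagrams having an arc between dots $r-1$ and $r$; erasing that arc identifies $W'$ with $V_{r-2,t-1}$ and hence $S^\lambda\otimes W'$ with $S^{(t-1,\lambda)}$. I claim:
\begin{enumerate}[(a)]
\item For every $v\in V_{r,t}$, the product $v\circ\tilde d$ lies in $W'$, because the bottom arc of the $e$ block forces an arc at positions $(r-1,r)$ in the output regardless of $v$.
\item For $v=v'\otimes e\in W'$, the only loop appearing in the computation of $v\circ\tilde d$ is the one formed between $v$'s arc at $(r-1,r)$ and the top arc of the $e$ block, so
\[v\circ\tilde d=\delta\cdot(v'\circ d_{t-1})\otimes e,\qquad \pi(\tilde d,v)=\pi(d_{t-1},v').\]
\end{enumerate}
Together these show that $\rho(\tilde d)$ sends all of $S^{(t,\lambda)}$ into $S^\lambda\otimes W'$, and acts on the latter subspace as exactly $\delta$ times $R_{t-1}$. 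Because the image lies in $S^\lambda\otimes W'$, every nonzero eigenvalue of $\rho(\tilde d)$ on $S^{(t,\lambda)}$ is also an eigenvalue of the restriction to $S^\lambda\otimes W'$ with the same multiplicity, so the multiset of nonzero eigenvalues of $\rho(\tilde d)$ equals $\delta$ times the multiset of nonzero eigenvalues of $R_{t-1}$.

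Combining this with $\tilde d\sim\delta d_t$ via Theorem~\ref{theo same ct implies same eigenvalues}, together with the algebraic-equals-geometric statement for nonzero eigenvalues in Theorem~\ref{thm same ct implies same char poly}, yields the multiset identity $\delta\cdot(\text{nonzero eigenvalues of }R_t)=\delta\cdot(\text{nonzero eigenvalues of }R_{t-1})$. For $\delta\neq 0$ I just divide out by $\delta$ and finish. The case $\delta=0$ splits: when $m_0\geq 1$, Theorem~\ref{thm char of arbitrary diagram} forces $d_t\sim 0$ and $d_{t-1}\sim 0$, so both $R_t$ and $R_{t-1}$ are nilpotent and there is nothing to prove; when $m_0=0$ I would lift through a $p$-modular system, picking $\hat\delta\in R$ with $\bar{\hat\delta}=0$ but $\hat\delta\neq 0$, apply the $\hat\delta\neq 0$ case to the $R$-forms of the cell modules, and use that by Corollary~\ref{cor sum of eigenvals of any diagram} the nonzero eigenvalues are roots of unity whose multiplicities behave well under reduction modulo $\Pi$.

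The main obstacle is the careful verification of (a) and (b): locating the single $\delta$-loop in the $e$ slot regardless of the global shape of $v$, tracking the induced permutation $\pi(\tilde d,v)$ of free dots so that it genuinely restricts to $\pi(d_{t-1},v')$, and ruling out extra loops that could contaminate the scalar. The $\delta=0$ corner is a secondary technicality, which is either trivial or dispatched by lifting.
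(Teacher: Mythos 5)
Your argument is essentially correct, but it takes a genuinely different route from the paper's. The paper also reduces to a convenient representative (via Theorem~\ref{thm same ct implies same char poly}) and also exhibits a block upper-triangular structure on $S^{(t,\lambda)}$ relative to the span of partial diagrams containing a fixed arc; the difference is the choice of representative. Instead of $d_{t-1}\otimes e$, the paper takes $d_t=d_\mu^{\ast}$, whose first columns carry a bottom arc $(1,2)$ and a top arc $(2,3)$ interlocking with the rest of the diagram, so that $\hat v_i\circ d_t=\widehat{v_i\circ d_{t-1}}$ with \emph{no} closed loop: the top-left block is exactly $R_{t-1}$, not $\delta R_{t-1}$. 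That single choice makes the proof uniform in $\delta$ -- no appeal to Theorem~\ref{thm char of arbitrary diagram}, no passage through the relation $\sim$ and the eigenvalue results, no conversion of geometric into algebraic multiplicities, no cancellation of $\delta$, and no case distinction at $\delta=0$. Your version buys an easier combinatorial verification (the loop in the $e$-slot and the identity $\pi(\tilde d,v)=\pi(d_{t-1},v')$ are immediate), at the price of that extra machinery; note also that Theorem~\ref{theo same ct implies same eigenvalues} is stated for two diagrams, so for the pair $(\tilde d,\delta d_t)$ you are really re-running its proof for $\sim$-related elements of $H$, which is legitimate (the proof only uses the relation) but should be said explicitly.

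The one place where you are thinner than you should be is the case $\delta=0$, $m_0=0$. The lifting idea works, but not as you cite it: Corollary~\ref{cor sum of eigenvals of any diagram} assumes the nonzero part of the cycle type is $p$-special, which is not given here. What you actually need is Proposition~\ref{prop eigenvals are roots of delta} together with Theorem~\ref{thm same ct implies same char poly}, which for $m_0=0$ make the nonzero eigenvalues of the lifted matrices roots of unity, hence units of $R$; then reducing the characteristic polynomials modulo $\Pi$ carries the nonzero spectrum of $\hat R_t$ (resp.\ $\hat R_{t-1}$), with multiplicities, onto that of $R_t$ (resp.\ $R_{t-1}$), and the char-$0$ case finishes the argument. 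You must also record that the cell modules admit $R$-forms compatible with reduction, and the phrase ``$p$-modular system'' does not cover a ground field of characteristic $0$ with $\delta=0$ (e.g.\ $B_r(0)$ over $\C$), where one should lift along $F[[x]]\subset F((x))$ with $\hat\delta=x$ instead. All of this is repairable with the paper's own results, but it is exactly the bookkeeping that the paper's choice of representative avoids.
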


\begin{proof}
Recall the construction of the modules $S^{(t,\lambda)}$ from Section~\ref{section defn of cell modules}. We will show that there are bases of $S^{(t,\lambda)}=S^{\lambda} \otimes V_{t,r}$ and $S^{(t-1,\lambda)} = S^{\lambda} \otimes V_{t-1,r-2}$ such that, with respect to these bases, $$ R_t = \left( \begin{array}{cc}
R_{t-1} & A  \\
 0 & 0
\end{array} \right) $$ for some matrix $A$.

 Let $\{e_1,e_2,\ldots,e_q \}$ be a basis of the Specht module $S^{\lambda}$ and let $\mathcal{B}_{t-1,r-2}=\{ v_1,\ldots,v_w\}$ be the standard basis of partial diagrams for the vector space $V_{t-1,r-2}$. For each partial diagram $v_i \in V_{t-1,r-2}$ we can define an element $\hat{v_i} \in V_{t,r}$ in the following way: Add two dots to the left of $v_i$ and connect these two dots by an arc, that is $$\hat{v_i} = \begin{array}{c} \begin{xy}  \xymatrix@!=0.01pc{ \bullet \tra[r] & \bullet  & \tra[rr] & & v_i \tra[rr] & & }  \end{xy} \end{array}. $$ Any partial diagram in $V_{r,t}$ in which dots $1$ and $2$ are connected by an arc arises in this way. Extend the set $\hat{\mathcal{B}}_{t-1,r-2}=\{ \hat{v}_1,\ldots,\hat{v}_w \}$ to a basis $\mathcal{B}_{t,r}$ of the vector space $V_{t,r}$.

Let $d_{\mu}$ be the canonical diagram of cycle type $\mu$ as defined in Definition~\ref{defn canonical diagram of cycle type}. Denote by $d_{\mu} \sta$ the diagram obtained by horizontally flipping $d_{\mu}$. Notice that this has cycle type $\mu$ since the direction of the arrows in each connected component are reversed.  By Theorem~\ref{thm same ct implies same char poly}, we may assume that $d_t$ and $d_{t-1}$ are any diagrams with cycle type $\mu$. Therefore, for convenience, we will choose $d_t=d_{\mu}^*$. Thus $d_t \in \brd$ is of the form $$d_t = \begin{array}{c} \begin{xy} \xymatrix@!=0.01pc{  \bullet  &\bullet \tra[r] &  \bullet & & & & \\ & &  & \cdots & \bullet^x \tra[dll] & \cdots &\bullet^y \tra[ullllll] \\ \bullet \tra[r] & \bullet    & \bullet   & & & & } \end{xy} \end{array}. $$ Also, we will choose $d_{t-1}$ to be the following diagram on $2(r-2)$ dots:
$$d_{t-1} = \begin{array}{c} \begin{xy} \xymatrix@!=0.01pc{    \bullet & & & & \\  & \cdots & \bullet^x \tra[dll] & \cdots &\bullet^y \tra[ullll] \\  \bullet   & & & & } \end{xy} \end{array}.$$ This is obtained from $d_t$ by deleting the first two dots in each row and connecting the third dot of the top row of $d_t$ to $y$. Notice that $d_t$ and $d_{t-1}$ have the same cycle type $\mu$.

 Thus, by the definition of multiplication of the partial diagram $v \in V_{t,r}$ by the diagram $d_t$, we have that $v \circ d_t$ will always have an arc between dots $1$ and $2$. Hence, $v \circ d_t \in \hat{\mathcal{B}}_{t-1,r-2}$ for any $v \in V_{t,r}$. We can conclude that the matrix $R_t$ is of the form $$R_t = \left( \begin{array}{cc}
\ast & \ast \\
0 & 0
\end{array} \right)$$ with respect to the basis $\{ e_i \otimes b \ | \ i=1,\ldots,q, \ b \in \mathcal{B}_{t,r} \}$.

It remains to show that the top left part of $R_t$ is the same as $R_{t-1}$. This is the same as showing that the action of $d_t$ on $\langle \hat{\mathcal{B}}_{t-1,r-2} \rangle_F$, including the induced permutation of this action, are the same as the action of $d_{t-1}$ on $V_{t-1,r-2}$.

Now for any $i=1,\ldots,w$, the product $\hat{v}_i d_t$ is given by:
\begin{align*} 
\hat{v}_i d_t=  \begin{array}{c} \begin{xy} \xymatrix@!=0.01pc{ \bullet \tra[r] \dta[d] & \bullet \dta[d]  & \tra[rr]  \dta[d]  &  \dta[d]  & v_i \tra[rr]  \dta[d]  &  \dta[d]  &  \dta[d]  \\ \bullet  &\bullet \tra[r] &  \bullet & & & & \\ & &  & \cdots & \bullet^x \tra[dll] & \cdots &\bullet^y \tra[ullllll] \\ \bullet \tra[r] & \bullet    & \bullet   & & & & } \end{xy} \end{array}. \end{align*}
It follows that
 $$\hat{v}_i  d_t = \begin{array}{c} \begin{xy}  \xymatrix@!=0.01pc{ \bullet \tra[r] & \bullet  & \tra[rr] & & v_i d_{t-1} \tra[rr] & & }  \end{xy} \end{array}, $$ that is, $\hat{v}_i d_t = \widehat{v_i d_{t-1}}$.
Therefore the result follows.
\end{proof}

Consider the ordinary and modular character table of the $\brd$-modules $S^{(t,\lambda)}$, defined in the obvious way. Then Proposition~\ref{prop same ct diff br same mod char} implies that the corresponding character table of the smaller Brauer algebra $B_{r-2}(\delta)$ sits at the bottom right. A similar statement is known for the decomposition matrix, see Proposition 5.2 in \cite{HartmannPaget}. Thus, we can deduce that this special form also holds for the character table for the simple modules $D^{(t,\lambda)}$.

\begin{theo}
\label{thm modular char of  quasipartitions} \label{thm br char of quasipartitions}
\begin{enumerate}[(i)]
\item Let $\lambda$ be a partition and let $t$ and $t'$ be non-negative integers with $t \geq t'$. Suppose  $\mu$ is a cycle type for both $B_{|\lambda|+2t'}(\delta)$ and $B_{|\lambda|+2t}(\delta)$. Then $$\chi^{(t,\lambda)}(\mu)=\chi^{(t',\lambda)}(\mu).$$
Equivalently, there is an ordering of the rows and columns of the character table $ \Xi_{\brd}$  such that $$ \Xi_{\brd} =\left( \begin{matrix}
  \Xi_{\Sigma_r}& 0 \\
 A &  \Xi_{B_{r-2}(\delta)}
\end{matrix} \right).$$ Here  $\Xi_{\Sigma_r}$ denotes the character table of the symmetric group $\Sigma_r$ and $A$ is some matrix.
\item  Let $\lambda$ be a $p$-special partition and let $t$ and $t'$ be non-negative integers with $t \geq t'$. Suppose  $\mu$ is a $p$-special cycle type for both $B_{|\lambda|+2t'}(\delta)$ and $B_{|\lambda|+2t}(\delta)$. Then $$\varphi^{(t,\lambda)}(\mu)=\varphi^{(t',\lambda)}(\mu).$$
Equivalently, there is an ordering of the rows and columns of the character table $ \Phi_{\brd}$  such that $$ \Phi_{\brd} =\left( \begin{matrix}
  \Phi_{\Sigma_r}& 0 \\
 A &  \Phi_{B_{r-2}(\delta)}
\end{matrix} \right).$$ Here  $\Phi_{\Sigma_r}$ denotes the Brauer character table of the symmetric group $\Sigma_r$ and $A$ is some matrix.
\end{enumerate}
\end{theo}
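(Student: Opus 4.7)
The plan is to prove both parts simultaneously by induction on $t$, reducing to the shift-by-one identities
\[
\chi^{(t,\lambda)}_{\brd}(\mu) = \chi^{(t-1,\lambda)}_{B_{r-2}(\delta)}(\mu) \quad \text{and} \quad \varphi^{(t,\lambda)}_{\brd}(\mu) = \varphi^{(t-1,\lambda)}_{B_{r-2}(\delta)}(\mu)
\]
for every $t \geq 1$, every $p$-regular partition $\lambda$ with $|\lambda|+2t = r$, and every cycle type $\mu$ valid in $B_{r-2}(\delta)$ (additionally $p$-special for the modular statement). Iterating this one-step reduction from $t$ down to $t'$ then yields the full theorem, since $\mu$ remains valid at each stage.

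The first step is the analogous stability for cell modules. By Proposition~\ref{prop same ct diff br same mod char}, the matrices implementing a diagram of cycle type $\mu$ acting on $S^{(t,\lambda)}$ in $\brd$ and on $S^{(t-1,\lambda)}$ in $B_{r-2}(\delta)$ share the same non-zero eigenvalues with multiplicities. Since zero eigenvalues contribute neither to the trace nor to $\hat{\delta}^{m_0}$ times a sum of lifted non-zero eigenvalues, we obtain $\chi^{\brd}_{S^{(t,\lambda)}}(\mu) = \chi^{B_{r-2}(\delta)}_{S^{(t-1,\lambda)}}(\mu)$ and the identical statement for $\varphi$.

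The second step promotes this from cell modules to simple modules via Theorem~\ref{theo form of dec matrix}. Two consequences of the block structure are crucial: firstly, $d^{\brd}_{(t,\lambda),(s,\nu)} = d^{B_{r-2}(\delta)}_{(t-1,\lambda),(s-1,\nu)}$ for every $s \geq 1$; secondly, for every $p$-regular partition $\nu$ of $r$, the simple module $D^{(0,\nu)}$ is a quotient of $S^{(0,\nu)} = S^\nu \otimes V_{r,0}$, on which every Brauer diagram containing a horizontal arc acts as zero, so $\chi^{(0,\nu)}_{\brd}(\mu) = \varphi^{(0,\nu)}_{\brd}(\mu) = 0$ for every cycle type $\mu$ realisable in $B_{r-2}(\delta)$ (these being precisely the cycle types forcing horizontal arcs). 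Expanding both cell module characters into their simple constituents, substituting cell module stability, and discarding the vanishing $(0,\nu)$-contributions on the $\brd$ side, I arrive at
\[
\sum_{s' \geq 0,\, \nu' \text{ $p$-regular}} d^{B_{r-2}(\delta)}_{(t-1,\lambda),(s',\nu')} \Bigl[\chi^{(s'+1,\nu')}_{\brd}(\mu) - \chi^{(s',\nu')}_{B_{r-2}(\delta)}(\mu)\Bigr] = 0
\]
for every cell module label $(t-1,\lambda)$ of $B_{r-2}(\delta)$.

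Finally, by Theorem~\ref{theo form of dec matrix} the decomposition matrix $d^{B_{r-2}(\delta)}$ becomes lower triangular with ones on the diagonal once restricted to $p$-regular row labels, so it has full column rank and every bracket in the display must vanish. Specializing $(s',\nu') = (t-1,\lambda)$ delivers the ordinary character identity, and the modular version follows by the same argument combined with the additivity of $\varphi$ on composition factors from Proposition~\ref{prop same cf iff same modular char}. The main obstacle is aligning the three structural ingredients — the HartmannPaget shift of decomposition coefficients, the vanishing of $D^{(0,\nu)}$ on cycle types supported in $B_{r-2}(\delta)$, and the invertibility on the $p$-regular block — so that the rectangular decomposition matrix can effectively be inverted on exactly the relevant subspace of characters.
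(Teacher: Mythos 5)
Your proof is correct and rests on essentially the same ingredients as the paper's: Proposition~\ref{prop same ct diff br same mod char} for the stability of cell-module characters (ordinary and modular), the Hartmann--Paget block and unitriangular structure of the decomposition matrix from Theorem~\ref{theo form of dec matrix}, and the vanishing of the $(0,\nu)$-characters on cycle types that force horizontal arcs. The paper packages the same cancellation as the single matrix identity $X_{\brd}=D'_{\brd}\,\Xi_{\brd}$ and inverts the square partial decomposition matrix, while you run it entrywise via full column rank of $D_{B_{r-2}(\delta)}$; the only point you leave implicit is the identification of the top-left block with $\Xi_{\Sigma_r}$ (that $D^{(0,\nu)}$ restricts to the simple $F\Sigma_r$-module $D^{\nu}$), which follows at once from your own observation that diagrams with horizontal arcs annihilate $S^{(0,\nu)}$.
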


\begin{proof}
We will start with the case of the ordinary character table $\Xi_{\brd}$. By definition, the rows are labelled by the set $C_{p}$ and the columns are labelled by the set $C$.
Order $C$, and thus $C_{p} \subseteq C$,  in the following way: If $\lambda$ is a partition of $r-2t$ and $\mu$ is a partition of $r-2l$ then $\lambda \geq \mu $ whenever $t<l$. If $l=t$ order the partitions lexicographically. In particular, for fixed $t$ the partitions of $r-2t$ are ordered in the same way as they are for the character table of the symmetric group, denoted $\Xi_{\Sigma_{r-2t}}$.

Recall the definition of $D_{\brd}$ from Section~\ref{section defining decomposition matrices}.
Define the partial decomposition matrix $D'_{\brd}$ to be the square matrix obtained from $D_{\brd}$ by just taking all rows indexed by $C_{p}$. By Theorem~\ref{theo form of dec matrix} we know that $D'_{\brd}$  is of the form $$D'_{\brd} = \left( \begin{array}{cc}
 D'_{F\Sigma_r} & 0  \\
 A' &  D'_{B_{r-2}(\delta)}
\end{array} \right).$$  Here $D'_{F\Sigma_r}$ and $D'_{B_{r-2}(\delta)}$ are the partial decomposition matrices of $F\Sigma_r$ and $B_{r-2}(\delta)$, respectively, and $A'$ is some matrix. Notice that $D'_{\brd}$ is lower-triangular and each diagonal entry is $1$, so that $D'_{\brd}$ is invertible.

Let $X_{\brd}$ be the matrix with rows indexed by $C_{p}$ and columns indexed by $C$ defined as follows:  For $\lambda \in C_{p}$ a $p$-regular partition of $r-2t$ and $\mu \in C$, the $(\lambda,\mu)$th entry is given by  $ \chi^{(t,\lambda)}(\mu)$. Thus $X_{\brd}$ is the character table of the $\brd$-cell modules $S^{(t,\lambda)}$ corresponding to $p$-regular partitions.

It is clear that all diagrams on $r$ dots of cycle type a partition of $r$ must be permutations. But since $S^{(0,\lambda)} \res =S^{\lambda}$, the action of such a permutation on $S^{(0,\lambda)}$ is the same as the action on  $S^{\lambda}$. So in the case when both $\lambda$ and $\mu$ are partitions of $r$, the corresponding character value is just the ordinary symmetric group character value. Therefore, the top left corner of $X_{\brd}$  is precisely the ordinary character table of the symmetric group but restricted to $p$-regular partitions. Using this and Proposition~\ref{prop same ct diff br same mod char}, we can deduce that $$ X_{\brd}= \left( \begin{array}{cc}
 X_{\Sigma_r}&0  \\
 Q & X_{B_{r-2}(\delta)}
\end{array} \right).$$ Here $X_{\Sigma_r}$ and $X_{B_{r-2}(\delta)}$ are defined analogously to $X_{\brd}$ and $Q$ is some matrix.  Notice that for every $p$-regular partition $\lambda$, the character of the cell module  $S^{(t,\lambda)}$ is given by $$\sum_{\theta \in C_{p}} [ S^{(t,\lambda)}:  D^{(k,\theta)}] \chi^{(k,\theta)}$$ where $2t+|\lambda| = 2k+|\theta|$.  It follows that $$ X_{\brd} = D'_{\brd} \Xi_{\brd}.$$ Substituting, we have $$  \left( \begin{array}{cc}
 X_{\Sigma_r}&0  \\
 Q & X_{B_{r-2}(\delta)}
\end{array} \right) = \left( \begin{array}{cc}
 D'_{F\Sigma_r} & 0  \\
 A' &  D'_{B_{r-2}(\delta)}
\end{array} \right)   \Xi_{\brd}.$$ Since $D_{\brd}'$ is invertible by above, we obtain $$ \Xi_{\brd}=  \left( \begin{array}{cc}
 (D'_{F\Sigma_r}) \inv X_{F\Sigma_r}  & 0  \\
 A &    (D'_{B_{r-2}(\delta)})\inv X_{B_{r-2}(\delta)}
\end{array} \right)  =  \left( \begin{array}{cc}
 \Xi_{\Sigma_r} & 0  \\
 A &  \Xi_{B_{r-2}(\delta)}
\end{array} \right) $$ where $A$ is some matrix, as required.

In the second case the proof is exactly the same as the first part, except that one needs to substitute $\Phi$ for $\Xi$ and $\varphi$ for $\chi$.
\end{proof}

\begin{rk}
Notice that in the special case when $\brd$ is semisimple, part $(i)$ has been shown by Ram, see Corollaries 5.2-5.4 of \cite{Ram}. 
\end{rk}

The theorem justifies the following notation:

\begin{note}
For any partition  $\lambda$ of $r-2t$  and any cycle type $\mu$ set $\chi^{\lambda}(\mu):=\chi^{(t,\lambda)}(\mu)$ for any or equivalently all choices of $t$.
\end{note}

The theorem implies that in order to compute the ordinary and modular character table of the Brauer algebra one only needs two ingredients: the character table of the symmetric groups $\Sigma_r, \Sigma_{r-2},\ldots,\Sigma_1 \backslash \Sigma_0$ and the composition factors of the restriction of the simple $\brd$-modules $D^{(t,\lambda)}$ to the symmetric group $\Sigma_r$. This follows by observing that, with the chosen ordering, the first columns of the character tables correspond to partitions of $r$. By definition of the cycle type, diagrams corresponding to such a partition have no horizontal arcs. Furthermore, every diagram with no horizontal arcs correspond to a partition of $r$. Therefore these first columns of the character tables are simply labelled by conjugacy classes of permutations. So, in order to compute the matrix $A$, one only needs to compute the character values of permutation diagrams. It is thus sufficient to consider the restriction of the corresponding simple $\brd$-module to the symmetric group algebra $F\Sigma_r$. When the Brauer algebra is semisimple this restriction is known, see Theorem 4.1 in \cite{HW3}. 

We will now state another corollary of Theorem~\ref{thm modular char of  quasipartitions} which generalizes a well known result for symmetric groups, see for example Corollary 22.26 in \cite{JamesLiebeck}. Let $p$ be prime. If $g \in \Sigma_r$ then there exist unique $x,y \in \Sigma_r$ with the following properties: $g=xy=yx$, $x$ has order a power of $p$ and $y$ has order coprime to $p$. The element $y$ is called the $p'$-part of $g$. The result says that for an ordinary $\Sigma_r$-character $\chi$, we have $\chi(g) \equiv \chi(y) \mod p$. Here we use the fact that character values of symmetric groups are always integers. 

The $p'$-part of a permutation in cycle notation is obtained by ignoring all cycles of length a multiple of $p$. Let us examine what this means for the cycle type. If $g$ has cycle type $\mu=(\mu_1,\ldots,\mu_k)$ for some $k \in \N$, then the cycle type of $y$ is obtained by replacing each $\mu_i$ that is divisible by $p$ by $1^{\mu_i}$. The partition defined by this procedure is called the $p'$-part of $\mu$ and is denoted by $\mu_{p'}$. We extend this naturally to partitions including $0$s.

\begin{prop}
\label{prop character only depends on p' part}
Let $\chi$ be an ordinary  $\brd$-character and suppose $\mu$ is a cycle type for $\brd$. Then $\chi(\mu)=\chi(\mu_{p'})$.
\end{prop}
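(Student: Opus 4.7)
The plan is to reduce the claim to the classical modular-character identity for the symmetric group via Theorem~\ref{thm br char of quasipartitions}. First, by Theorem~\ref{theo properties of ordinary chars}(ii) one has $\chi((\mu',0^m)) = \delta^m\chi(\mu')$ for any cycle type $\mu'$; since forming $p'$-parts leaves trailing zeros alone, the same identity holds after taking $p'$-parts, so it suffices to treat cycle types $\mu$ with no zero parts. Because traces are additive on short exact sequences, every ordinary character of $\brd$ is a non-negative integer combination of the irreducibles $\{\chi^{\lambda} : \lambda \in C_p\}$, so it further suffices to prove $\chi^{\lambda}(\mu) = \chi^{\lambda}(\mu_{p'})$. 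If $\mathrm{char}(F)=0$, the paper's convention forces $\mu_{p'}=\mu$ and the statement is trivial, so assume henceforth $\mathrm{char}(F)=p>0$.

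I will split on the relative sizes of $\lambda$ and $\mu$. If $|\lambda|>|\mu|$, then working inside $B_{|\lambda|}(\delta)$ the row labelled $\lambda$ belongs to the top row-block of $\Xi_{B_{|\lambda|}(\delta)}$, whereas $\mu$ indexes a column lying in a strictly later column-block, so the block-triangular structure of Theorem~\ref{thm br char of quasipartitions} forces $\chi^{\lambda}(\mu)=0$; since $|\mu_{p'}|=|\mu|$, the same reasoning gives $\chi^{\lambda}(\mu_{p'})=0$, and both sides vanish.

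In the remaining case $|\lambda|\leq|\mu|$, the integer $t:=(|\mu|-|\lambda|)/2$ is non-negative (because $|\lambda|\equiv|\mu|\equiv r\pmod 2$), and Theorem~\ref{thm br char of quasipartitions} applied in both directions yields $\chi^{\lambda}_{\brd}(\mu) = \chi^{(t,\lambda)}_{B_{|\mu|}(\delta)}(\mu)$ and $\chi^{\lambda}_{\brd}(\mu_{p'}) = \chi^{(t,\lambda)}_{B_{|\mu|}(\delta)}(\mu_{p'})$. A diagram in $B_{|\mu|}(\delta)$ of cycle type $\mu$ with no zero parts has $|\mu|$ through arcs and hence no horizontal arcs, so it is a permutation $\pi\in\Sigma_{|\mu|}$ with cycle structure $\mu$; by the same argument a diagram of cycle type $\mu_{p'}$ is a permutation of cycle structure $\mu_{p'}$. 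Writing $\rho$ for the representation of $B_{|\mu|}(\delta)$ on $D^{(t,\lambda)}$, one then has $\chi^{(t,\lambda)}_{B_{|\mu|}(\delta)}(\mu) = \mathrm{tr}(\rho(\pi))$, a value depending only on the restriction of $\rho$ to $F\Sigma_{|\mu|}$.

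Finally, decompose $\pi = \pi_p\pi_{p'}$ as a commuting product of its $p$-part and $p'$-part. In characteristic $p$, the matrix $\rho(\pi_p)$ has order a power of $p$, so $(\rho(\pi_p)-I)^{p^a}=0$ for some $a$, making it unipotent. Simultaneously triangularizing the commuting pair $\rho(\pi_p), \rho(\pi_{p'})$ shows that the diagonal of $\rho(\pi)$ coincides with that of $\rho(\pi_{p'})$, which gives $\mathrm{tr}(\rho(\pi)) = \mathrm{tr}(\rho(\pi_{p'}))$. Since $\pi_{p'}$ has cycle type $\mu_{p'}$, this last trace equals $\chi^{(t,\lambda)}_{B_{|\mu|}(\delta)}(\mu_{p'}) = \chi^{\lambda}_{\brd}(\mu_{p'})$, completing the plan. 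The main obstacle will be the bookkeeping required to confirm that the parity conditions $|\lambda|\equiv|\mu|\equiv|\mu_{p'}|\pmod 2$ make both $\mu$ and $\mu_{p'}$ into valid cycle types of the smaller algebra $B_{|\mu|}(\delta)$, so that Theorem~\ref{thm br char of quasipartitions} may legitimately be invoked in each direction.
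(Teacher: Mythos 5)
Your proposal is correct and follows essentially the same route as the paper's proof: reduce to irreducible characters, strip the zero parts via Theorem~\ref{theo properties of ordinary chars}, kill the case $|\lambda|>|\mu|$ with the block-triangular form, and use Theorem~\ref{thm br char of quasipartitions} to transfer the value to $B_{|\mu|}(\delta)$, where a zero-free cycle type is realized by a permutation of $\Sigma_{|\mu|}$ with cycle structure $\mu$, so that everything reduces to the symmetric-group statement. The only real difference is the last step: the paper simply cites the classical symmetric-group fact (stated there as the congruence $\chi(g)\equiv\chi(y) \bmod p$ for the integer-valued ordinary characters), whereas you prove the needed identity directly, writing $\pi=\pi_p\pi_{p'}$, observing $\rho(\pi_p)$ is unipotent in characteristic $p$, and simultaneously triangularizing the commuting pair to get $\tr(\rho(\pi))=\tr(\rho(\pi_{p'}))$. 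That inline argument is arguably the more apt justification here, since the characters in question are already characteristic-$p$ traces rather than reductions of characteristic-zero values; just note that the simultaneous triangularization uses that $F$ is algebraically closed (a standing assumption in Section~\ref{section on reduction}, and in any case harmless after base change since traces are unaffected). Your parity bookkeeping ($|\lambda|\equiv|\mu|=|\mu_{p'}|\equiv r\pmod 2$, so $t=(|\mu|-|\lambda|)/2\in\N_0$ and both $\mu$ and $\mu_{p'}$ are cycle types of $B_{|\mu|}(\delta)$ and of $\brd$) is exactly what is needed and checks out.
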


\begin{proof}
It is sufficient to prove this for an arbitrary irreducible ordinary character $\chi^{\lambda}$ for $\lambda \in C$. If $|\mu|<|\lambda|$ then  $\chi^{\lambda} (\mu) = 0$ by Theorem~\ref{thm modular char of  quasipartitions} and we are done. Let $\mu'$ be the partition obtained from $\mu$ by deleting all zeros. Suppose also that $\mu$ has $k$ parts equal to $0$ for some $k \in \N_0$. Thus  $\chi^{\lambda} (\mu) = \delta^k \chi^{\lambda} (\mu')$ by Theorem~\ref{theo properties of ordinary chars}. If $|\mu| \geq |\lambda|$ then we can deduce from Theorem~\ref{thm modular char of  quasipartitions}  and by considering the Brauer algebra $B_{|\mu|}(\delta)$ that $\chi^{\lambda} (\mu) = \delta^k \chi^{\lambda}_{\Sigma_{|\mu|}} (\mu')$. Here $\chi^{\lambda}_{\Sigma_{|\mu|}}$ denotes the $\Sigma_{|\mu|}$-character obtained by restriction of $\chi^{\lambda}$ to  $\Sigma_{|\mu|}$. Thus, the claim follows by the corresponding result for symmetric groups.
\end{proof}

\begin{rk}
\label{rk linear indep of rows of char table}
The proposition implies that the columns of the character table $\Xi$ are linearly dependent in characteristic $p$. However, if we restrict the indexing set of the columns of $\Xi$ to $p$-special partitions, then the resulting table will be square as there is a $1-1$ correspondence between $p$-regular partitions and $p$-special partitions. Thus the columns will be linearly independent since certainly the rows are.
\end{rk}

\section{Conjugacy via characters of representations}
\label{section chi conjugacy}
We are now finally in a position to discuss the third kind of conjugation defined in \cite{Mazorchuk}, namely $\chi$-conjugacy. We will determine the $\chi$-conjugacy classes using the character theory developed in the previous sections. Since characters are class functions, it is clear that if two elements of $H$ are $(uv,vu)$-conjugate then they are also $\chi$-conjugate. We will exploit this to give a full description of the $(uv,vu)$-conjugacy classes. 

In contrast to the $(uv,vu)$-conjugacy classes, the $\chi$-conjugacy classes do actually depend on the characteristic of the ground field:

\begin{theo}
\label{theo determination of chi conjugacy}
Let $F$ be a field of characteristic $p \geq0$. Suppose $d_1,d_2 \in H$ have cycle type $(\alpha,0^l)$ and $(\beta,0^m)$, respectively, for some non-increasing lists of positive integers $\alpha,\beta$ and some $l,m \in \N_0$.  Let  $(\alpha_{p'})$ and $(\beta_{p'})$ be the $p'$-parts of the partitions $(\alpha)$ and $(\beta)$, respectively. Then:
\begin{enumerate}[(i)]
\item  If $\delta$ is non-zero and not a root of unity then $d_1$ and $d_2$ are $\chi$-conjugate if and only if  $(\alpha_{p'},0^l) = (\beta_{p'},0^m)$.
\item If $\delta$  has order $n$ for some $n \in \N$ then $d_1$ and $d_2$ are $\chi$-conjugate if and only if $(\alpha_{p'}) = (\beta_{p'})$ and $l \equiv m \mod n$.
\item If $\delta=0$  then $d_1$ and $d_2$ are $\chi$-conjugate

 if and only if either $l,m \neq 0$ or both $l=m=0$ and $(\alpha_{p'}) = (\beta_{p'})$.
\end{enumerate}

\end{theo}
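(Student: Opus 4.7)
The plan is to translate the $\chi$-conjugacy condition into a linear statement about columns of the ordinary character table of $\brd$ and then to split into cases according to the multiplicative behaviour of $\delta$.

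First I would observe that since every ordinary $\brd$-character is a non-negative integer combination of the irreducible characters $\chi^{\lambda}$ with $\lambda \in C_{p}$ (via composition factors), $d_1$ and $d_2$ are $\chi$-conjugate if and only if $\chi^{\lambda}(d_1) = \chi^{\lambda}(d_2)$ for every $\lambda \in C_{p}$. Combining Theorem~\ref{theo properties of ordinary chars} with Proposition~\ref{prop character only depends on p' part} gives, for any $d \in H$ of cycle type $(\gamma, 0^s)$,
$$\chi^{\lambda}(d) \;=\; \delta^s\, \chi^{\lambda}((\gamma)) \;=\; \delta^s\, \chi^{\lambda}(\gamma_{p'}).$$
So the $\chi$-conjugacy condition for $d_1$ and $d_2$ is equivalent to
\begin{equation}\label{eq-main-chi-plan}
\delta^l\, \chi^{\lambda}(\alpha_{p'}) \;=\; \delta^m\, \chi^{\lambda}(\beta_{p'}) \qquad \text{for every } \lambda \in C_{p}.
\end{equation}

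The central tool is Remark~\ref{rk linear indep of rows of char table}: once its columns are restricted to $C_{p'}$, the ordinary character table $\Xi_{\brd}$ becomes a square invertible matrix. In particular any two distinct $p$-special partitions index linearly independent columns, and no such column is identically zero. Thus (\ref{eq-main-chi-plan}) simply asserts that two columns of this invertible matrix, scaled by $\delta^l$ and $\delta^m$, coincide.

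From here the three cases are mechanical. In (i), $\delta \neq 0$ of infinite order: both scalars are non-zero so (\ref{eq-main-chi-plan}) forces the columns at $\alpha_{p'}$ and $\beta_{p'}$ to be proportional, linear independence gives $\alpha_{p'} = \beta_{p'}$, and then evaluating on any $\lambda$ with non-vanishing entry yields $\delta^l = \delta^m$, whence $l = m$. Case (ii) is identical except that $\delta^l = \delta^m$ translates into $l \equiv m \pmod{n}$. For (iii), $\delta = 0$: if both $l, m \geq 1$ each side of (\ref{eq-main-chi-plan}) vanishes, giving $\chi$-conjugacy for free; if $l = m = 0$, (\ref{eq-main-chi-plan}) becomes $\chi^{\lambda}(\alpha_{p'}) = \chi^{\lambda}(\beta_{p'})$ for all $\lambda$, forcing $\alpha_{p'} = \beta_{p'}$ by column independence; and if exactly one of $l, m$ is zero, (\ref{eq-main-chi-plan}) would force a $p$-special column to vanish, contradicting invertibility. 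In each subcase the converse is immediate from the displayed formula for $\chi^{\lambda}(d)$.

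The only delicate point is making sure that, whenever we invoke non-vanishing of a column, the index $\alpha_{p'}$ or $\beta_{p'}$ really lies in $C_{p'}$ (equivalently, that the corresponding list is non-empty). This is automatic for $r \geq 1$: every diagram on $2r$ dots has at least one connected component, so $l = 0$ forces $\alpha$ non-empty and $m = 0$ forces $\beta$ non-empty. Beyond this bookkeeping, the argument is a direct application of Remark~\ref{rk linear indep of rows of char table} once (\ref{eq-main-chi-plan}) is set up, so no genuinely new combinatorial input is needed.
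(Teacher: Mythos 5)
Your proposal is correct, but it takes a noticeably different route from the paper's proof. You reduce $\chi$-conjugacy to the single identity $\delta^l\chi^{\lambda}(\alpha_{p'})=\delta^m\chi^{\lambda}(\beta_{p'})$ for all $\lambda\in C_p$ and then invoke Remark~\ref{rk linear indep of rows of char table} for the Brauer algebra itself: the square table $\Xi_{p'}$ is invertible, so distinct $p$-special columns are independent and no column vanishes, and the three cases fall out of comparing two scaled columns. The paper instead pushes everything down to symmetric groups: using Theorem~\ref{thm modular char of quasipartitions} it rewrites $\chi^{\lambda}((\alpha))$ as a $\Sigma_{r-2t}$-character value, uses the vanishing $\chi^{\lambda}((\beta))=0$ when $|\lambda|>|(\beta)|$ to dispose of the case $|(\alpha)|\neq|(\beta)|$ separately, and then appeals to linear independence of the $p$-special columns of the character table of the fixed symmetric group $\Sigma_{r-2t}$. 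Your version buys uniformity -- cycle types of different sizes index different columns of the one matrix $\Xi_{p'}$, so no size comparison or vanishing argument is needed -- while the paper's version leans only on the (classical) symmetric-group statement plus the block-triangular structure, at the cost of an extra case split. Both routes rest on the same ingredients (Theorem~\ref{theo properties of ordinary chars}, Proposition~\ref{prop character only depends on p' part}, and column independence coming from the modular character theory), so your argument is legitimate within the paper's framework. One small bookkeeping remark: your parenthetical ``equivalently, that the corresponding list is non-empty'' is not quite the right criterion, since for $r$ even the empty partition does lie in $C_{p'}$ and its column of $\Xi_{p'}$ is likewise nonzero by the same invertibility; this is in fact what rescues the subcase where $\alpha$ and $\beta$ are both empty (so $l,m\geq 1$) in cases (i) and (ii), a configuration your non-emptiness observation does not cover but your main tool does. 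The interpretation of $\chi^{\lambda}(())$, where no diagram has the literally empty cycle type, is a convention glossed over in the paper's own proof as well, so it is not a defect specific to your argument.
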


\begin{proof}
($\Longleftarrow$) Let $\chi$ be an ordinary $\brd$-character. Then we deduce from Theorem~\ref{theo properties of ordinary chars} and Proposition~\ref{prop character only depends on p' part} that \begin{align*} \chi( (\alpha,0^l))  =\delta^l\chi( (\alpha))=\delta^l\chi( (\alpha_{p'}))  \text{ \ and \ }
 \chi( (\beta,0^m))  =\delta^m\chi( (\beta))=\delta^m\chi( (\beta_{p'})). \end{align*}
Therefore, it follows in each of the three case that  $\chi(d_1)= \chi( (\alpha,0^l))= \chi( (\beta,0^m))=\chi(d_2)$ so that $d_1$ and $d_2$ are $\chi$-conjugate, as required.

($\Longrightarrow$)
Suppose without loss of generality that  $|(\alpha)| \geq |(\beta)|$. Let $t=\frac{r-|(\alpha)|}{2}$ and  $k=\frac{r-|(\beta)|}{2}$. For each partition $\lambda$ of $r-2t$, we have by Theorem~\ref{theo properties of ordinary chars} and Proposition~\ref{prop character only depends on p' part} and by Theorem~\ref{thm modular char of  quasipartitions}  that \begin{align*} \chi^{\lambda}( (\alpha,0^l)) & =\delta^l\chi^{\lambda}( (\alpha))=\delta^l\chi_{\Sigma_{r-2t}}^{\lambda}( (\alpha_{p'})), \\
 \chi^{\lambda}( (\beta,0^m)) & =\delta^m\chi^{\lambda}( (\beta))=\delta^m\chi^{\lambda}( (\beta_{p'})). \end{align*}
Here $\chi_{\Sigma_{r-2t}}^{\lambda}$ denotes the character corresponding to the $\Sigma_{r-2t}$-module $S^{\lambda}$.
Notice that whenever  $|(\alpha)| > |(\beta)|$ then Theorem~\ref{thm modular char of  quasipartitions} implies $ \chi^{\lambda}( (\beta))=0$. With this in mind, let us study each of the above cases in turn:

\begin{enumerate}[(i)]
\item If $|(\alpha)| > |(\beta)|$, we deduce that $\delta^l \chi_{\Sigma_{r-2t}}^{\lambda}( (\alpha_{p'}))=0$ and hence $ \chi_{\Sigma_{r-2t}}^{\lambda}( (\alpha_{p'}))=0$ for all partitions $\lambda$ of $r-2t$. But this contradicts the linear independence of the columns of the character table of $\Sigma_{r-2t}$ corresponding to $p$-special partitions, see Remark~\ref{rk linear indep of rows of char table}.

If $|(\alpha)| = |(\beta)|$ then we know by assumption that $$\delta^m\chi^{\lambda}( (\beta_{p'})) = \delta^m\chi_{\Sigma_{r-2t}}^{\lambda}( (\beta_{p'})) =  \delta^l \chi_{\Sigma_{r-2t}}^{\lambda}( (\alpha_{p'})).$$ If  $(\alpha_{p'}) \neq (\beta_{p'})$ this again contradicts the linear independence of the columns of the character table of $\Sigma_{r-2t}$ corresponding to $p$-special partitions as $\delta \neq 0$.  
Consider the case $(\alpha_{p'}) = (\beta_{p'})$ but $l \neq m$. Notice that there will be some partition $\lambda$ of $r-2t$ such that  $\chi_{\Sigma_{r-2t}}^{\lambda}( (\alpha_{p'})) \neq 0$ by a similar linear independence argument as above. We thus deduce that $\delta^l=\delta^m$ which contradicts the fact that $\delta$ is not a root of unity.

\item  In this case we can essentially use the same argument as in the first part except that we might have $\delta^l=\delta^m$ but $l\neq m$. However it is clear that this holds if and only if $l \equiv m \mod n$.

\item We have to distinguish two cases:

\begin{enumerate}
\item \textit{Assumption: $l=0$ and either $m \neq 0$ or $(\alpha_{p'}) \neq (\beta_{p'})$.}  We can conclude $\chi_{\Sigma_{r-2t}}^{\lambda} ((\alpha_{p'})) =  \delta^m \chi^{\lambda} ((\beta_{p'}))$. Therefore, if either $m \neq 0$ or $(\alpha_{p'}) \neq (\beta_{p'})$ we get a contradiction using that $\delta =0$ and similar arguments as above.

\item \textit{Assumption: $m=0$  and $l \neq 0$.}  If $\mu$ is any partition of $r-2k$ then we obtain $0=\delta^l \chi^{\mu} ((\alpha_{p'})) =  \chi^{\mu} ((\beta_{p'}))$. Thus we get a contradiction to  the linear independence of the columns of the character table of $\Sigma_{r-2k}$ corresponding to $p$-special partitions.

\end{enumerate}

\end{enumerate}
\end{proof}

As a consequence, we can determine the $(uv,vu)$-conjugacy classes:

\begin{cor}
\label{cor determination of uvvu conjugacy}
Let  $d_1,d_2 \in H$ with cycle type $(\alpha,0^l)$ and $(\beta,0^m)$, respectively, for some non-increasing lists of positive integers $\alpha,\beta$ and some $l,m \in \N_0$.  Then:
\begin{enumerate}[(i)]
\item  If $\delta$ is non-zero and not a root of unity then $d_1$ and $d_2$ are $(uv,vu)$-conjugate if and only if  they have the same cycle type.
\item If $\delta$  has order $n$ for some $n \in \N$ then $d_1$ and $d_2$ are $(uv,vu)$-conjugate if and only if $(\alpha) = (\beta)$ and $l \equiv m \mod n$.
\item If $\delta=0$  then $d_1$ and $d_2$ are $(uv,vu)$-conjugate

 if and only if either $l,m \neq 0$ or both $l=m=0$ and $(\alpha) = (\beta)$.
\end{enumerate}
\end{cor}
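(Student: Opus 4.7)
The strategy is to sandwich $(uv,vu)$-conjugacy between two statements we already control: on the one hand, the implications $\Sigma_r\text{-conjugate}\Rightarrow(uv,vu)\text{-conjugate}\Rightarrow \chi\text{-conjugate}$ together with Theorems~\ref{theo ct determined uv conjugation} and \ref{thm char of arbitrary diagram} give easy sufficient conditions; on the other hand, Theorem~\ref{theo determination of chi conjugacy} pins down $\chi$-conjugacy completely. The extra ingredient needed to match the two is Remark~\ref{rk monoid indep of field}, which asserts that $(uv,vu)$-conjugacy in $H$ depends only on the multiplicative order of $\delta$, not on the ambient field.

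For the sufficiency $(\Leftarrow)$ in each case, I would first apply Theorem~\ref{thm char of arbitrary diagram} iteratively to obtain, from any $d_1\in H$ of cycle type $(\alpha,0^{l})$, a relation $d_1\sim \delta^{l}d_1'$ with $d_1'\in D$ of cycle type $(\alpha)$; similarly $d_2\sim \delta^{m}d_2'$ with $d_2'$ of cycle type $(\beta)$. When $(\alpha)=(\beta)$, Theorem~\ref{theo ct determined uv conjugation} gives $d_1'\sim d_2'$, so it only remains to check that $\delta^{l}=\delta^{m}$ in $F$ under the hypothesis of the relevant case. In case (i), equality of cycle types forces $l=m$ directly. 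In case (ii), $l\equiv m\pmod n$ gives $\delta^{l}=\delta^{m}$ since $\delta$ has order $n$. In case (iii), whenever $l,m\geq 1$ both scalars vanish so $d_1=d_2=0$ in $H$ and the conjugacy is automatic, while the subcase $l=m=0$ with $(\alpha)=(\beta)$ reduces to a direct application of Theorem~\ref{theo ct determined uv conjugation}.

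For the necessity $(\Rightarrow)$, I would exploit Remark~\ref{rk monoid indep of field} to pass to a characteristic zero field $F$ in which $\delta$ has the prescribed order (any of: infinite, equal to $n$, or zero). Since $(uv,vu)$-conjugacy over $H$ does not change, and it always implies $\chi$-conjugacy over this chosen $F$, Theorem~\ref{theo determination of chi conjugacy} is now applicable with $p=0$. Because every partition is $0$-special, the $p'$-reductions $\alpha_{p'}$ and $\beta_{p'}$ in that theorem collapse to $\alpha$ and $\beta$, and its three clauses specialize to exactly the three conditions asserted in the corollary.

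I expect the main obstacle to be the careful accounting in the sufficiency direction when $\delta=0$ or when $\delta^{n}=1$: one must be sure that the chain $d_1\sim\delta^{l}d_1'\sim\delta^{m}d_2'\sim d_2$ consists of genuine $\sim$-steps in $H$, i.e.\ that the scalar equalities $\delta^{l}=\delta^{m}$ are honest identities of elements of $H$ and not merely numerical coincidences that require a further $\sim$-step. This is essentially routine but needs the three cases to be separated, as above. Apart from this, the proof is short once the input theorems are in place.
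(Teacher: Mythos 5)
Your proposal is correct and follows essentially the same route as the paper: the backward direction from Theorems~\ref{theo ct determined uv conjugation} and~\ref{thm char of arbitrary diagram} (iterated to strip off the zero parts, with the scalar identities $\delta^l=\delta^m$ checked case by case), and the forward direction by transferring to a characteristic-zero field via Remark~\ref{rk monoid indep of field}, using that $(uv,vu)$-conjugacy implies $\chi$-conjugacy, and then quoting Theorem~\ref{theo determination of chi conjugacy} with $p=0$ so that the $p'$-parts are trivial. One small wording slip: in case (iii) with $l,m\geq 1$ it is not $d_1$ and $d_2$ themselves that vanish (the zeros in the cycle type may come from the diagram rather than from a power of $\delta$), but rather the reduced representatives $\delta^l d_1'$ and $\delta^m d_2'$, so the correct conclusion is the chain $d_1\sim\delta^l d_1'=0=\delta^m d_2'\sim d_2$, which is exactly the chain you describe and gives the desired conjugacy.
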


\begin{proof}
($\Longrightarrow$) If $d_1$ and $d_2$ are $(uv,vu)$-conjugate then they are in particular $\chi$-conjugate over a field of characteristic $0$. Thus one direction follows by Theorem~\ref{theo determination of chi conjugacy}.\\
($\Longleftarrow$) This direction follows immediately by Theorems~\ref{theo ct determined uv conjugation} and~\ref{thm char of arbitrary diagram}.
\end{proof}

\end{document}